\newtheorem{Theorem}{Theorem}[section]
\newtheorem{prop}[Theorem]{Proposition}
\newtheorem{lemma}[Theorem]{Lemma}
\newtheorem{cor}[Theorem]{Corollary}
\newtheorem{conj}[Theorem]{Conjecture}
\theoremstyle{definition}
\newtheorem{Remark}[Theorem]{Remark}
\def\bb{({\bf i}, {\bf j})}
\def\Aut{\mbox{Aut}}
\def\ul{{\underline{\ell}}}
\def\T{{\sf{\Sigma}}}
\def\sT{{\sf{T}}}
\def\Br{\mbox{Br}}
\def\Cone{\mbox{Cone}}
\def\Conv{\mbox{Conv}}
\def\D{{\mathcal{D}}}
\def\I{{I}}
\def\Kom{{\mathsf{Kom}}}
\def\1{{\bf{1}}}
\def\E{\mathsf{E}}
\def\F{\mathsf{F}}
\def\P{\mathsf{P}}
\def\Q{\mathsf{Q}}
\def\catC{\mathcal{C}}
\def\la{\langle}
\def\ra{\rangle}
\def\ep{\epsilon}
\def\k{\Bbbk}
\def\K{\mathcal{K}}
\def\sl{\mathfrak{sl}}
\def\H{\mathcal{H}}
\def\oH{\overline{\mathcal{H}}}
\def\h{\widehat{\mathfrak{h}}}
\def\R{{\sf R}}
\def\Z{\mathbb Z}
\def\N{\mathbb N} 
\def\C{\mathbb C}
\def\g{\mathfrak{g}}
\def\l{\lambda}
\def\End{\mathrm{End}}
\def\id{\mathrm{id}}
\def\s{\sigma}
\def\dmod{{\mathrm{-mod}}} 
\newcommand{\Hom}{{\rm Hom}}
\newcommand{\Ind}{{\rm{Ind}}}
\title{Braid group actions via categorified Heisenberg complexes}
\begin{document} 
\setcounter{tocdepth}{1}

\author{Sabin Cautis}
\email{scautis@math.columbia.edu}
\address{Department of Mathematics\\ University of Southern California \\ Los Angeles, LA}

\author{Anthony Licata}
\email{anthony.licata@anu.edu.au}
\address{Mathematical Sciences Institute\\ Australian National University \\ Canberra, Australia}

\author{Joshua Sussan}
\email{jsussan@mercy.edu}
\address{Department of Mathematics and Computer Information Sciences \\ Mercy College \\ Dobbs Ferry, NY}

% \date{\today}

\begin{abstract} 
We construct categorical braid group actions from 2-representations of a Heisenberg algebra. These actions are induced by certain complexes which generalize spherical (Seidel-Thomas) twists and are reminiscent of the Rickard complexes defined by Chuang-Rouquier. Conjecturally, one can relate our complexes to Rickard complexes using categorical vertex operators. 
\end{abstract}

\maketitle

\tableofcontents

\section{Introduction}

Let $\D(0)$ and $\D(1)$ be graded, triangulated categories and let $\P: \D(0) \rightarrow \D(1)$ and $\Q: \D(1) \rightarrow \D(0)$ be bi-adjoint functors up to a grading shift (which we take to be equal to $2$ for convenience). If $\Q \circ \P \cong \1_0 \la -1 \ra \oplus \1_0 \la 1 \ra$, where $\1_0$ denotes the identity functor of $\D(0)$, then $\P$ is called a spherical functor. This notion is due (in various levels of generality) to Seidel-Thomas \cite{ST}, Horja \cite{H}, Anno \cite{A} and Rouquier \cite{Rou1}. 

The general theory of spherical twists states that $\T := \Cone(\P \circ \Q \la -1 \ra \rightarrow \1_1)$ is an autoequivalence of $\D(1)$. One important reason to consider equivalences coming from spherical functors is that if $\{\P_i\}$ is a configuration of spherical functors, one for each node $i$ of a simply laced Kac-Moody Dynkin diagram $D$, then the associated auto equivalences $\T_i$ will define an action of the corresponding braid group $\Br_D$ on $\D(1)$.

The notion of a spherical twist was generalized in \cite{CR,CK2} to that of categorical $\g$ actions, with $\g$ a symmetric Kac-Moody algebra. Such categorical $\g$ actions can be used to construct further examples of braid group actions. Another generalization of spherical twists, which replaces the role of the Kac-Moody algebra by a Heisenberg algebra, is the subject of the current paper. Namely, fix a simply laced Kac-Moody Dynkin diagram $D$ with vertex set $I$. For each $n \ge 0$, suppose we have a collection of additive categories $\D(n)$ together with bi-adjoint functors 
$$\P_i : \D(n) \rightarrow \D(n+1) \ \ \text{ and } \ \ \Q_i: \D(n+1) \rightarrow \D(n)$$ 
for any $i \in I$ which give a 2-representation of a particular Heisenberg algebra (see Section \ref{sec:2heis}). Roughly speaking, this means that we have isomorphisms
\begin{equation}\label{eq:heiseq}
\Q_i \circ \P_i \cong \P_i \circ \Q_i \oplus \1_n \la -1 \ra \oplus \1_n \la 1 \ra,
\end{equation}
along with a precise collection of natural transformations of functors. Such a 2-representation generalizes the notion of a spherical functor since $\P_i$ and $\Q_i$ are spherical functors between $\D(0)$ and $\D(1)$. 

However, the data of a Heisenberg 2-representation contains more than just a spherical functor. For instance, the action by natural transformations includes an action of the symmetric group $S_k$ on the composition $\P_i^k$. This splits $\P_i^k$ into a direct sum of indecomposable functors $\P_i^{(\lambda)}$ corresponding to irreducible representations of $S_k$ ($\Q_i^k$ also splits analogously). We may then form a complex
\begin{equation}\label{eq:intro1}
\T_i \1_n := \left[ \dots \rightarrow \bigoplus_{\l \vdash d} \P_i^{(\l)} \Q_i^{(\l^t)} \la -d \ra \1_n \rightarrow \bigoplus_{\l \vdash d-1} \P_i^{(\l)} \Q_i^{(\l^t)} \la -d+1 \ra \1_n \rightarrow \dots \rightarrow \P_i \Q_i \la -1 \ra \1_n\rightarrow \1_n \right]
\end{equation}
(we have omitted the symbol $\circ$ for composition of functors in the above, as we will do for the remainder of the paper). Theorem \ref{thm:main1} of the current paper states that these complexes define a categorical action of the associated braid group on the homotopy category of each $\D(n)$. In particular, each $\T_i$ defines an equivalence of categories. 

An important example where the setup above holds is the following. Let $A$ be the skew zig-zag algebra (defined in section \ref{sec:zigzag}), which is the quadratic dual of the deformed preprojective algebra of a quiver. For $n \geq 0$, we let $A^{[n]}$ denote the wreath product of $A$ with the group algebra of $S_n$ (by convention, we take $A^{[0]} = \C$). By a formal construction, the braid group action of \cite{KS, HK} on the homotopy category $\Kom(A \dmod)$ by spherical twists lifts to a braid group action on $\Kom(A^{[n]} \dmod)$ for each $n$.  

On the other hand, from the point of view of representation theory of infinite dimensional algebras, it is natural to consider the categories $\Kom(A^{[n]} \dmod)$ together. In particular, in \cite{CL1} we define 2-representations of a Heisenberg algebra on $\oplus_n A^{[n]} \dmod$.   Thus there are two algebraic objects of interest: the braid group action (which is somewhat complicated) and the Heisenberg action (which is simpler). The constructions of the current paper explain precisely the relationship between these two actions. In particular, we prove that integrable 2-representations of the Heisenberg algebra always induce braid group actions. 

There is also a geometric version of this setup, where the algebra $A$ is replaced by a surface $X$, the algebra $A^{[n]}$ is replaced by the Hilbert scheme $X^{[n]}$ of $n$ points on $X$, and the triangulated category $\Kom(A^{[n]} \dmod)$ is replaced by the derived category of coherent sheaves $D(X^{[n]})$. Then,  as studied by Ploog \cite{P}, if a group $G$ acts on $D(X)$ then there is an induced action of $G$ on $D(X^{[n]})$.  In particular, if $G$ is an affine braid group of simply-laced type one can take the surface $X$ to be the ALE space $\widehat{\C^2/\Gamma}$, where $\Gamma\subset SL_2(\C)$ is the finite subgroup associated to the affine quiver by the McKay correspondence.  A 2-representation of the associated Heisenberg algebra on $\oplus_n D(X^{[n]})$ was defined in \cite{CL1}, and Theorem \ref{thm:geom} then describes the relationship between this 2-representation and the associated affine braid group action on $D(X^{[n]})$.

In the remainder of the introduction we will give a more detailed exposition of the content in this paper.

\subsection{Heisenberg actions and braid groups} 

To any simply laced Dynkin diagram $D$ one can associate a quantum Heisenberg algebra algebra $\h$. This algebra has generators $P_i^{(n)}, Q_i^{(n)}$ satisfying relations described in Section \ref{sec:hei}.  A representation $V$ of this algebra breaks up into weight spaces $V = \oplus_{\ell \in \N} V(\ell)$ with 
$$P_i^{(n)}: V(\ell) \rightarrow V(\ell+n) \text{ and } Q_i^{(n)}: V(\ell+n) \rightarrow V(\ell).$$

In \cite{CL1} we define a 2-category $\H$ whose Grothendieck group is isomorphic to $\h$. A 2-representation of $\H$ consists of graded, additive categories $\D(\ell)$ where $\ell \in \N$ and, for any partition $\l$, functors 
$$\P_i^{(\l)}: \D(\ell) \rightarrow \D(\ell+|\l|) \text{ and } \Q_i^{(\l)}: \D(\ell+|\l|) \rightarrow \D(\ell)$$
satisfying various relations described in sections \ref{sec:2cat} and \ref{sec:2rep}.

Now, in the homotopy category $\Kom(\H)$ of $\H$ one can define complexes as in (\ref{eq:intro1}) where the differentials are given by certain explicit 2-morphisms described in section \ref{sec:cpx}. Given a 2-representation, each $\T_i \1_n$ defines an endofunctor of $\Kom(\D(n))$. The following is the main result of this paper. 

\begin{Theorem}\label{thm:main1}
For each $n \in \N$, the map $\sigma_i \mapsto \T_i \1_n$ defines a morphism $\Br(D) \rightarrow \Aut(\Kom(\D(n)))$ where $\Br(D)$ denotes the braid group associated with the Dynkin diagram $D$ and the $\sigma_i$'s are its standard generators. 
\end{Theorem}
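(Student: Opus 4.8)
The plan is to verify the defining relations of $\Br(D)$ directly at the level of complexes, working universally in the homotopy category $\Kom(\H)$ of the $2$-category $\H$ rather than inside any single $2$-representation. Since a $2$-representation is a $2$-functor out of $\H$, each relation established as a homotopy equivalence in $\Kom(\H)$ transports automatically to $\Kom(\D(n))$ for every $n$. Three things must be checked: (i) each $\T_i \1_n$ is invertible, so that $\sigma_i \mapsto \T_i \1_n$ really lands in $\Aut(\Kom(\D(n)))$; (ii) the distant commutation $\T_i \T_j \cong \T_j \T_i$ when $i$ and $j$ are non-adjacent in $D$; and (iii) the braid relation $\T_i \T_j \T_i \cong \T_j \T_i \T_j$ when $i$ and $j$ are adjacent.

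For invertibility I would first write down the candidate inverse $\T_i' \1_n$ as the complex obtained by reflecting (\ref{eq:intro1}): placing $\1_n$ in cohomological degree $0$, reversing the arrows, negating the grading shifts, and interchanging the roles of $\P_i$ and $\Q_i$ (this is the analog of passing from a spherical twist to its inverse). One then forms the composite $\T_i \T_i' \1_n$ and shows it is homotopy equivalent to $\1_n$. The engine is the Heisenberg relation (\ref{eq:heiseq}), $\Q_i \P_i \cong \P_i \Q_i \oplus \1 \la -1 \ra \oplus \1 \la 1 \ra$, together with its higher-order analogs for the divided functors $\P_i^{(\l)}$: in the associated double complex the $\P_i \Q_i$ parts of each interior term cancel against neighbors while the $\1 \la \pm 1 \ra$ summands telescope, so that a sequence of Gaussian eliminations governed by the explicit $2$-morphisms of Section \ref{sec:cpx} contracts everything except a single copy of $\1_n$. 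Identifying which $2$-morphisms are isomorphisms and checking that the differentials on the surviving summands vanish is the bookkeeping heart of this step.

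The distant commutation (ii) is the most routine. When $i \not= j$ are non-adjacent, $\la i,j \ra = 0$ and the Heisenberg $2$-representation structure supplies canonical commutation isomorphisms between the $i$-functors and the $j$-functors that are compatible with the differentials; consequently the double complex computing $\T_i \T_j \1_n$ is isomorphic, after transposition, to the one computing $\T_j \T_i \1_n$, and the two total complexes are isomorphic as complexes.

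The braid relation (iii) is the main obstacle. Here $i$ and $j$ are adjacent, so $\la i,j \ra = -1$ and the mixed relations between $\P_i,\Q_i$ and $\P_j,\Q_j$ are genuinely nontrivial; everything reduces to the rank-two sub-datum indexed by $\{i,j\}$. My approach is to expand both triple composites $\T_i \T_j \T_i \1_n$ and $\T_j \T_i \T_j \1_n$ into their associated multicomplexes and to exhibit a single canonical complex onto which each deformation-retracts. Concretely, I would filter each triple composite, use the mixed Heisenberg relations to recognize the repeated internal cancellations, and perform Gaussian elimination to strip away the contractible pieces; the minimal model that survives is manifestly symmetric under $i \leftrightarrow j$, which identifies the two retracts. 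The difficulty is entirely combinatorial: the number of terms grows with the partitions $\l$ and with homological degree, and one must verify that the intertwining $2$-morphisms satisfy the coherences needed to carry out the eliminations consistently — equivalently, that the induced differentials square to zero and that no unexpected extension survives in the minimal complex. Once this common minimal model is produced, (iii) follows, and together with (i) and (ii) it yields the desired homomorphism $\Br(D) \to \Aut(\Kom(\D(n)))$.
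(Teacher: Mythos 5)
Your proposal takes a genuinely different route from the paper, but as written it has gaps, one of which is fatal to the plan. A preliminary point first: you take for granted that (\ref{eq:intro1}) is a complex at all. The differentials of $\T_i \1_n$ are compositions as in (\ref{eq:diff}), each defined only up to scalar, and the paper deliberately never verifies $d^2=0$ directly; instead it \emph{deduces} the existence (and uniqueness up to homotopy, Proposition \ref{prop:Tcpx}) of an indecomposable complex with these terms from an explicit bimodule model. A self-contained direct verification would require, at minimum, an extension of Lemma \ref{lem:3} controlling the relevant degree-two Hom spaces together with a consistent choice of scalars; your plan is silent on this. There is also a quieter issue with working ``universally in $\Kom(\H)$'': there $\T_i \1_n$ is an unbounded complex (integrability of the 2-representation is exactly what makes it finite), so the Gaussian eliminations you propose would additionally need a convergence argument.

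Second, and decisively: your step (iii) --- the adjacent braid relation --- is asserted, not proved. The claim that after elimination ``the minimal model that survives is manifestly symmetric under $i \leftrightarrow j$'' is precisely the statement requiring proof, and nothing in the Heisenberg relations makes it manifest: the triple composite $\T_i \T_j \T_i \1_n$ has terms indexed by triples of partitions, the mixed relations of the form $\Q_i \P_j \cong \P_j \Q_i \oplus (\text{corrections})$ proliferate cross terms, and neither you nor the paper carries out this elimination. The paper is structured specifically to avoid it: the braid relations are first obtained at $n=1$ from the Khovanov--Seidel/Huerfano--Khovanov spherical twist complexes $\Sigma_i \1_1$, lifted to all $n$ inside the Fock 2-representation by the wreath 2-functor $(\cdot)^{[n]}$ (Corollary \ref{cor:braids}) --- this is where the $i \leftrightarrow j$ symmetry comes for free --- then identified with $\T_i \1_n$ via Theorem \ref{thm:braids} (the explicit isomorphism of Proposition \ref{prop:1} combined with the uniqueness statement of Proposition \ref{prop:Tcpx}), and finally transferred to an arbitrary integrable 2-representation by a multiplicity-space argument: each composite decomposes as a sum of terms $\P_i^{(\l)} \Q_i^{(\mu)} \la \ell \ra \1_n \otimes_\k V_\bullet$ with $V_\bullet$ a complex of vector spaces, exactness of $V_\bullet$ is detected by evaluating in $\K_{Fock}$ where the relation is already known, and the contractible pieces are removed by Lemma \ref{lem:cancel}. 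So the two approaches differ fundamentally: yours attempts the rank-two computation head-on, while the paper imports the rank-two relations from known results and only ever does formal transfer arguments. Your approach could in principle be completed, but its combinatorial heart is missing, and it is exactly the part the paper's detour through $\K_{Fock}$ exists to bypass.
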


Now, if we take $A$ to be the zig-zag algebra from \cite{HK} then, following \cite{CL1}, we can define a 2-representation of $\h$ where $\D(n) = A^{[n]} \dmod$. Theorem \ref{thm:main1} above then gives us a morphism $\Br(D) \rightarrow \Aut(\Kom(A^{[n]} \dmod))$. Applying Theorem \ref{thm:geom} (see also Remark \ref{rem:conv}) this also gives us a morphism $\Br(D) \rightarrow \Aut(D(A^{[n]} \dmod))$. 

When $n=1$ this gives the braid group action of Khovanov-Seidel \cite{KS} via spherical twists. For $n > 1$ we recover the action on $D(A^{[n]} \dmod)$ induced from that on $D(A \dmod)$ (see Theorem \ref{thm:braids}). 

\subsection{Another braid group action}
The complexes $\T_i$ also act on the 2-category $\Kom(\H)$ by conjugation. It would be interesting to describe this braid group action explicitly. 

While we do not address the conjugation action on the entire category $\Kom(\H)$ here, in section \ref{sec:braidH} we define another braid group action on $\Kom(\H)$ and conjecture (Conjecture \ref{conj:intertwiner}) that it agrees with the conjugation action. This additional action is defined explicitly by describing how each generator $\sigma_i^{\pm 1}$ of the braid group acts on the generating 1 and 2-morphisms.  

Although conjecturally related to conjugation by the complexes of Theorem \ref{thm:main1}, section \ref{sec:braidH} is independent of the rest of the paper. The proofs in section \ref{sec:braidH} are postponed until the appendix.  

\subsection{Lie algebra actions and braid groups}

The story above closely parallels (and is directly related to) that of quantum groups. Recall that for any Dynkin diagram $D$ on has the associated quantum group $U_q(\g)$. One can consider 2-representations of $\g$, which consist of various additive categories $\D(\l)$ indexed by weights $\l$ and functors 
$$\E_i^{(r)} \1_\l: \D(\l) \rightarrow \D(\l+r\alpha_i) \text{ and } \1_\l \F_i^{(r)}: \D(\l+r\alpha_i) \rightarrow \D(\l).$$
These functors satisfy certain relations lifting those in $U_q(\g)$. For more details see \cite{KL1,KL2,KL3,Rou2,CLa}. 

In analogy with (\ref{eq:intro1}), one can then define the Rickard complexes
$$\sT_i \1_\l := \left[ \dots \rightarrow \F_i^{(\la \l, \alpha_i \ra + s)} \E_i^{(s)} \la -s \ra \1_\l \rightarrow \dots \rightarrow \F_i^{(\la \l,\alpha_i \ra + 1)} \E_i \la -1 \ra \1_\l \rightarrow \F_i^{(\la \l,\alpha_i \ra)} \1_\l \right].$$
These complexes define a morphism $\Br(D) \rightarrow \Aut(\oplus_\l \Kom(\D(\l)))$ just like the one in Theorem \ref{thm:main1}. See \cite{CR,CKL,CK2} for more details.

The relationship between 2-representations of $\h$ and 2-representations of $\g$ is given by the vertex operator constructions from \cite{CL2}. Thus we expect to have the diagram
$$
\begin{tikzpicture}[>=stealth]

\draw (0.1,0) -- (5.9,0)[->] [very thick];
\draw (-.5,-.5) node {2-representations of $\h$};
\draw (6.5,-.5) node {2-representations of $\g$};
\draw (3,4.5) node {categorical braid group actions};
\draw (0.1,0) -- (3,3.9)[->] [very thick];
\draw (3.1,3.9) -- (6,0)[<-] [very thick];
\draw (3,.4) node {vertex operator complexes};
\draw (6.5,2) node {Rickard complexes};
\draw (-2,2) node {Theorem \ref{thm:main1} using complexes $\T_i$ from (\ref{eq:intro1})};
\end{tikzpicture}
$$
As the above diagram indicates, we should be able to deduce Theorem \ref{thm:main1} as a consequence of the braid group actions arising from 2-representations of $\g$ \cite{CK2} (this essentially amounts to checking that the diagram above commutes). However, there are several technical details required to give a proof in this way (see section \ref{sec:remarks} for more details), so in this paper we choose to give a direct construction of the left arrow. 

\noindent {\bf Acknowledgments:}
The authors benefited from discussions with Jon Kujawa and Eli Grigsby. S.C. was supported by NSF grant DMS-1101439 and the Alfred P. Sloan foundation. A.L. would like to thank the Institute for Advanced Study for support.

\section{Preliminaries}

We will always work over a base field $\k$ of characteristic zero. 

\subsection{Dynkin data}\label{sec:data}

Let $D$ be a finite graph without edge loops or multiple edges between vertices.  We let $I$ denote the vertex set of $D$, and $E$ the edge set.  The graph $D$ is is the Dynkin diagram of a symmetric simply-laced Kac-Moody algebra. We define a pairing $\la \cdot, \cdot \ra: I \times I \rightarrow \Z$ by $\la i,j \ra := C_{i,j}$ where $C_{i,j}$ is the Cartan matrix associated to our Dynkin diagram. More precisely:
$$\la i,j \ra =
\begin{cases}
2 & \text{ if } i=j \\
-1 & \text{ if } i \ne j \text{ are joined by an edge } \\
0 & \text{ if } i \ne j \text{ are not joined by an edge.}
\end{cases}$$
Associated to this data there is the braid group $\Br(D)$ which is generated by $\{\sigma_i\}_{i \in I}$ subject to the relations $\sigma_i \sigma_j = \sigma_j \sigma_i$ if $i,j \in I$ are not joined by an edge, and $\sigma_i \sigma_j \sigma_i = \sigma_j \sigma_i \sigma_j$ if they are joined.

Fix an orientation $\epsilon$ of $D$. For $i,j\in I$ with $\la i,j \ra = -1$, we set $\epsilon_{ij} = 1$ if the edge is oriented $i \rightarrow j$ by $\epsilon$ and $\epsilon_{ij} = -1$ if oriented $j \rightarrow i$. If $\la i,j \ra = 0$, then we set $\epsilon_{ij} = 0$. Notice that in both cases we have $\epsilon_{ij} = -\epsilon_{ji}$.

\subsection{Partitions}

Let $\l = (\l_1 \ge \l_2 \ge \dots \ge \l_k\geq 0 )$ be a partition.  We denote the size of $\lambda$ by $|\l| := \sum_i \l_i $; we write $\l \vdash n$ if $\l$ is a partition of $n$, and denote the transposed partition by $\l^t$.  If the number of $\l_i = k$ is $a_k$, we also write $\l = (1^{a_1},2^{a_2},\dots,s^{a_s}\dots)$.  For example, in this notation, $(n)^t = (1^n)$.  We write $\l' \subset \l$ if $\l'$ if $\l_i \ge \l'_i$ for all $i$. 

We denote by $\k[S_n]$ the group algebra of the symmetric group and $s_k = (k,k+1) \in S_n$ the simple transposition.  Since the characteristic of $\k$ is 0, $\k[S_n]$ is isomorphic to a direct sum of matrix algebras,
$\k[S_n] = \bigoplus_{\l \vdash n} M_{h_\l}(\k)$.  Here $\{h_\l\}_{\l\vdash n}$ are positive integers, and $M_s(\k)$ is the algebra of $s$-by$s$ matrices over $\k$.  For any partition $\l$ of $n$, we denote by $e_\l \in \k[S_n]$ a minimal idempotent (a matrix unit) in the matrix algebra $M_{h_\l}$ corresponding to $\l$. We denote by $\tau: \k[S_n] \rightarrow \k[S_n]$ the involution which sends $s_i \mapsto -s_i$ for all $i \in I$. The minimal idempotents $e_{\l}$ may be chosen so as to have have $\tau(e_\l) = e_{\l^t}$. 

\subsection{Zig-zag algebras}\label{sec:zigzag}

Let $cD$ denote the doubled quiver, with the same vertex set as $D$ and with two oriented edges (one in each orientation) for each edge of $D$.  Let $\k[dD]$ denote the path algebra of $dD$. A path in $dD$ is described as a sequence of vertices $(i_1 | i_2 | \dots | i_m)$ where $i_k$ and $i_{k+1}$ are connected by an edge in $D$. If $ D $ has more than two nodes then we define $B^D_\ep$ to be the quotient of $\C[dD]$ by the two sided ideal generated by
\begin{itemize}
\item $(a|b|c)$ if $a \ne c$ and
\item $\ep_{ab} (a|b|a) - \ep_{ac} (a|c|a)$ whenever $a$ is connected to both $b$ and $c$.
\end{itemize}
In the above, $e_i$ denotes the constant path which starts and ends at the vertex $i \in I$.
If $D$ consists of the single vertex only, we let $B^D_\ep$ be the algebra generated by $1$ and $X$ with $X^2 = 0$. If $D$ consists of two points joined by a single edge, we deÞne $B^D_\ep$ to be the quotient of $\k[dD]$ by the two-sided ideal spanned by all paths of length greater than two.
Notice that $B^D_\ep$ is $\Z$-graded by the length of the path (we denote the degree of a path by $|\cdot|$). 
The $\Z_2$-grading induced from the $\Z$ grading makes $B^D_\ep$ into a $\Z$-graded superalgebra.

\begin{Remark} The algebras $B^D_\ep$ first appeared in \cite{HK} in the context of categorifying the adjoint representation of the Lie algebra assciated to $D$. 
\end{Remark}

For $n \ge 0$, we define $\Z$-graded superalgebras $B^D_\ep(n) := ({B^D_\ep)}^{\otimes n} \rtimes \k[S_n]$. 
As a vector space, we have $B^D_\ep(n) = ({B^D_\ep)}^{\otimes n} \otimes_\k \k[S_n]$, but for the algebra structure
the tensor product is in the category of superalgebras (see \cite[Section 9.1]{CL1}).  Thus 
$$(a \otimes b) \cdot (a' \otimes b') = (-1)^{|b||a'|}(aa' \otimes bb'),$$
while $S_n$ acts by superpermutations 
$s_k \cdot (b_1 \otimes \dots \otimes b_k \otimes b_{k+1} \otimes \dots \otimes b_n) = (-1)^{|b_k||b_{k+1}|} b_1 \otimes \dots \otimes b_{k+1} \otimes b_k \otimes  \dots \otimes b_n.$
By convention, $B^D_\ep(0) = \k$. To shorten notation we will write 
$$e_{i,m} := (1 \otimes \dots \otimes 1 \otimes e_i \otimes 1 \otimes \dots \otimes 1,1) \in B^D_\ep(n)$$
for the idempotent where $e_i$ occurs in the $m$th tensor factor on the right hand side.  The $\Z$-grading on 
$B^D_\ep(n)$ is induced from that on $B^D_\ep$, with the factor $\k[S_n]$ placed in degree $0$.

\begin{Remark}\label{rem:super}
All the constructions in the remainder of the paper will involve $\Z$-graded superalgebras over $\k$ and graded supermodules or superbimodules over such superalgebras.  For simplicitly, we will write ``algebra", ``module", and "bimodule", omitting the understood prefixes ``$\Z$-graded" and ``super".
\end{Remark}

\subsection{The wreath functor $(\cdot)^{[n]}$}\label{sec:wreath}

If $A$ is a algebra then we can define a new algebra $A^{[n]} := A^{\otimes n} \rtimes \k[S_n]$. The grading and superstructure on $A^{[n]}$ are inherited from that on $A$, with the understanding that $S_n$ acts on $A$ by superpermutations and that the subalgebra $\k[S_n]\subset A^{[n]}$ is in degree 0.  Similarly, if $A_1$ and $A_2$ are algebras and $M$ is an $(A_2,A_1)$-bimodule, then we can define the $(A_2^{[n]}, A_1^{[n]})$-bimodule $M^{[n]} := M^{\otimes n} \rtimes \k[S_n]$. 

To describe $(\cdot)^{[n]}$ as a functor, it is convenient to use the language of 2-categories.  Let $\catC_a$ be the 2-category whose objects are algebras, 1-morphisms are bimodules, and 2-morphisms are bimodule maps.  Composition of 1-morphisms is tensor product of bimodules, and composition of 
2-morphisms is composition of bimodule maps.  

\begin{lemma}\label{lem:[n]functor}
The map $(\cdot) \mapsto (\cdot)^{[n]}$ is defines a 2-functor
$$
	(\cdot)^{[n]}:  \catC_a\longrightarrow \catC_a. 
$$
\end{lemma}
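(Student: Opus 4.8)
The plan is to specify $(\cdot)^{[n]}$ on all three levels of $\catC_a$ and then verify the axioms of a (weak) $2$-functor. On objects it is already given, $A \mapsto A^{[n]} = A^{\otimes n} \rtimes \k[S_n]$; on $1$-morphisms an $(A_2,A_1)$-bimodule $M$ goes to $M^{[n]} = M^{\otimes n} \rtimes \k[S_n]$; and on a $2$-morphism, i.e. a bimodule map $f \colon M \to N$, I set $f^{[n]} := f^{\otimes n} \otimes \id_{\k[S_n]}$. The first task is to confirm these land in $\catC_a$: that $M^{[n]}$ really is an $(A_2^{[n]}, A_1^{[n]})$-bimodule under the actions inherited from the superalgebra/superpermutation structure, and that $f^{[n]}$ is a map of such bimodules. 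Both reduce to routine checks that the defining super signs (the Koszul signs $(-1)^{|b||a'|}$ and the superpermutation signs for $S_n$) are respected; the only point requiring attention is that $S_n$ acts by superpermutations simultaneously on $M^{\otimes n}$ and on $A_i^{\otimes n}$, so that the two actions are compatible.

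The heart of the proof is compatibility with composition of $1$-morphisms. Since composition in $\catC_a$ is the tensor product of bimodules (which is associative only up to canonical isomorphism, so $\catC_a$ is a bicategory and $(\cdot)^{[n]}$ a pseudofunctor), I must construct a natural isomorphism
$$\mu_{M,M'} \colon M^{[n]} \otimes_{A_1^{[n]}} (M')^{[n]} \xrightarrow{\ \sim\ } (M \otimes_{A_1} M')^{[n]}$$
for composable bimodules $M' \in \catC_a(A_0,A_1)$, $M \in \catC_a(A_1,A_2)$, together with a unit isomorphism identifying $(A)^{[n]}$ with the identity bimodule ${}_{A^{[n]}}(A^{[n]})_{A^{[n]}}$ (here the unit constraint is essentially the identity). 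The map $\mu_{M,M'}$ is built using the copy of $\k[S_n]$ inside $A_1^{[n]}$ over which one tensors: every element of the left-hand side can be written, after sliding a permutation from the right factor of $M^{[n]}$ across the tensor where it acts as a superpermutation, in the form $(m_1 \otimes \cdots \otimes m_n \rtimes 1) \otimes (m'_1 \otimes \cdots \otimes m'_n \rtimes \sigma)$, and $\mu$ sends this to $(m_1 \otimes m'_1) \otimes \cdots \otimes (m_n \otimes m'_n) \rtimes \sigma$ up to the appropriate sign. I would check that this is well defined (independent of the chosen representative, using the relations defining $\otimes_{A_1^{[n]}}$), that it is a bimodule map for the left $A_2^{[n]}$ and right $A_0^{[n]}$ actions, that it is invertible, and that it is natural in both $M$ and $M'$ — the latter by matching $\mu$ against $f^{[n]} \otimes g^{[n]}$ and $(f \otimes g)^{[n]}$.

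Finally I would verify the coherence axioms: compatibility of $\mu$ with the associativity constraints of the two bicategories (the pentagon-type square relating $\mu_{M \otimes M', M''}$, $\mu_{M',M''}$ and their transposes) and with the unit constraints, together with functoriality on $2$-morphisms — preservation of identities $\id_M^{[n]} = \id_{M^{[n]}}$, of vertical composition $(g \circ f)^{[n]} = g^{[n]} \circ f^{[n]}$ (immediate from functoriality of $(\cdot)^{\otimes n}$), and of horizontal composition via $\mu$. The main obstacle is the construction and verification of $\mu_{M,M'}$: one must show it is well defined modulo the $A_1^{[n]}$-tensor relations and respects all the super signs, which is exactly the place where the $S_n$-superpermutation action interacts nontrivially with the tensor product over $A_1$ in each factor. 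Once $\mu$ is in hand as a natural bimodule isomorphism, the coherence and $2$-morphism axioms follow by direct (if bookkeeping-heavy) diagram chases.
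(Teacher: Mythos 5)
Your proposal is correct and follows essentially the same route as the paper: the key point in both is the explicit interleaving isomorphism $M^{[n]} \otimes_{A^{[n]}} (M')^{[n]} \xrightarrow{\sim} (M \otimes_{A} M')^{[n]}$ (your normalized-representative description, obtained by sliding the permutation across the tensor over $A^{[n]}$, agrees with the paper's formula $(m_\bullet,\sigma)\otimes(m'_\bullet,\sigma') \mapsto ((m_i \otimes m'_{\sigma(i)})_i, \sigma\sigma')$), together with the observation that $f \mapsto (f^{\otimes n},1)$ respects vertical composition. The additional pseudofunctor coherence checks you list are the routine verifications the paper compresses into ``it is not difficult to check.''
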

\begin{proof}
For $M_1$ is an $(A_2,A_1)$-bimodule and $M_2$ is an $(A_3,A_2)$ bimodule, 
we define $M_2^{[n]} \otimes_{A_2^{[n]}} M_1^{[n]} \rightarrow (M_2 \otimes_{A_2} M_1)^{[n]}$ by
$$(m_1 \otimes \dots \otimes m_n, \sigma) \otimes (m'_1 \otimes \dots \otimes m'_n, \sigma') \mapsto ((m_1 \otimes m'_{\sigma(1)}) \otimes \dots \otimes (m_n \otimes m'_{\sigma(n)}), \sigma \sigma').$$
It is not difficult to check that this map is an isomorphism.  Thus $(\cdot)\mapsto(\cdot)^{[n]}$ respects composition of 1-morphisms.  It is also clear that $(\cdot)\mapsto (\cdot)^{[n]}$ intertwines compositions of 2-morphisms, for if $f: M_1 \rightarrow M_2$ is a map of bimodules then 
$$f^{[n]} := (f \otimes \dots \otimes f, 1): M_1^{\otimes n} \rtimes \k[S_n] \rightarrow M_2^{\otimes n} \rtimes \k[S_n]$$ 
is a morphism $M_1^{[n]} \rightarrow M_2^{[n]}$ with $(f_2 \circ f_1)^{[n]} = f_2^{[n]} \circ f_1^{[n]}$. 
\end{proof}

The functor $(\cdot)^{[n]}$ is somewhat subtle. In particular,
\begin{itemize}
\item $(\cdot)^{[n]}$ is not linear: if $f,g \in \Hom(M_1,M_2)$, then both $(f+g)^{[n]}$ and $f^{[n]} + g^{[n]}$ are well-defined elements of $\Hom(M_1^{[n]},M_2^{[n]})$ but in general they are not equal to each other.
\item $(\cdot)^{[n]}$ is not additive: in general $(M_1 \oplus M_2)^{[n]}$ and $M_1^{[n]}\oplus M_2^{[n]}$ are not isomorphic (this is already clear at the level of vector spaces via a dimension count). Subsequently, $(\cdot)^{[n]}$ is neither left exact nor right exact.
\end{itemize}

However, $(\cdot)^{[n]}$ behaves well with respect to homotopies of complexes. Suppose 
$M_\bullet = M_0 \rightarrow \dots \rightarrow M_\ell$ is a complex of $(A_1,A_2)$-bimodules. Then the complex $M_\bullet^{[n]}$ is a complex of $(A_1^{[n]},A_2^{[n]})$-bimodules.  The slightly subtle part of this definition is the definition of the boundary map in the complex $M_\bullet^{[n]}$; the easiest way to define it is to consider $M_\bullet$ as a supermodule over  the superalgebra $A \otimes_\k \k[d]/d^2$, where $d$ has superdegree one. Then $M_\bullet^{[n]}$ is naturally an $(A \otimes_\k \k[d]/d^2)^{[n]} \cong A^{[n]}\otimes_{\k[S_n]} (\k[d]/d^2)^{[n]}$ supermodule.  Now the coproduct
$$ \Delta: \k[d]/d^2 \rightarrow (\k[d]/d^2)^{\otimes n}\subset (\k[d]/d^2)^{[n]} $$
given by
$$ \Delta(d) = (1\otimes1 \otimes \hdots \otimes d) + (1 \otimes d \otimes 1 \otimes\hdots \otimes 1) + \hdots + (d\otimes 1\otimes \hdots \otimes 1)$$
embeds $A^{[n]} \otimes_\k \k[d]/d^2$ as a subalgebra of $(A\otimes_\k \k[d]/d^2)^{[n]}$. Thus the $(A\otimes_\k \k[d]/d^2)^{[n]}$ supermodule $M_\bullet^{[n]}$ can be restricted to $A^{[n]}\otimes_\k \k[d]/d^2$, and thus $M_\bullet^{[n]}$ may be considered as a complex of $A^{[n]}$-modules.

An important point to keep in mind is that, because all constructions take place in the category of supermodules, the action of $\k[S_n]$ on an n-fold tensor product of graded vector spaces is via superpermutations.  Thus, spelling this out, the action of the simple transposition $s_i$ on the complex $M_\bullet^{[n]}$ is
$$s_i \cdot (m_1 \otimes \hdots \otimes m_i \otimes m_{i+1} \otimes \hdots m_n) = (-1)^{\deg(m_i) \deg(m_{i+1})} m_1\otimes \hdots \otimes m_{i+1}\otimes m_{i}\otimes \hdots m_n, $$
where $\deg(m_i) = |m_i| + |m_i|_h$ where $|m_i|$ denotes the inner graded degree of $m_i$ and $|m_i|_h$ denotes the homological degree of $m_i$.  

Now, the following lemma shows that the functor $(\cdot)^{[n]}$ behaves well with respect to homotopies. (This is not immediately obvious since $(\cdot)^{[n]}$ is not linear and chain homotopies involve linear combinations of maps.)

\begin{lemma}\label{lem:homotopy}
Let $C_\bullet,D_\bullet$ be complexes of $(A_1,A_2)$-bimodules, and suppose that $f,g: C_\bullet \longrightarrow D_\bullet$ are homotopic maps. Then $f^{[n]},g^{[n]}:C_\bullet^{[n]}\longrightarrow D_\bullet^{[n]}$ are homotopic. 
\end{lemma}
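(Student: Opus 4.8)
The plan is to reduce the statement to the standard fact that homotopic maps have homotopic tensor powers, the only new ingredient being an averaging argument forced by the non-linearity of $(\cdot)^{[n]}$. The key observation is that, although $(f+g)^{[n]} \ne f^{[n]} + g^{[n]}$ in general, the particular difference $f^{[n]} - g^{[n]}$ is perfectly well behaved: since $f^{[n]} = (f^{\otimes n}, 1)$ and $g^{[n]} = (g^{\otimes n}, 1)$ both act as the identity on the $\k[S_n]$-component, we have $f^{[n]} - g^{[n]} = (f^{\otimes n} - g^{\otimes n}) \rtimes \id$ as maps $C_\bullet^{[n]} \to D_\bullet^{[n]}$. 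Likewise, by the description of the boundary map via the coproduct $\Delta$, the differential on $C_\bullet^{[n]}$ is $d_{C^{\otimes n}} \rtimes \id$ (the total differential on the tensor factor, trivial on $\k[S_n]$), and similarly for $D_\bullet^{[n]}$. Thus it suffices to produce an $(A_1^{\otimes n}, A_2^{\otimes n})$-bilinear, $S_n$-equivariant homotopy $\bar{H} \colon C_\bullet^{\otimes n} \to D_\bullet^{\otimes n}$ of homological degree $+1$ satisfying $d_{D^{\otimes n}} \bar{H} + \bar{H} d_{C^{\otimes n}} = f^{\otimes n} - g^{\otimes n}$; such an $\bar{H}$ then extends to $\bar{H} \rtimes \id$, which is the desired $(A_1^{[n]}, A_2^{[n]})$-bimodule homotopy between $f^{[n]}$ and $g^{[n]}$.

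First I would recall that, by hypothesis, there is an $(A_1, A_2)$-bimodule map $h \colon C_\bullet \to D_\bullet$ of homological degree $+1$ with $f - g = d_D h + h d_C$. From $h$ one builds the usual telescoping homotopy $H := \sum_{k=1}^{n} g^{\otimes(k-1)} \otimes h \otimes f^{\otimes(n-k)}$ (with the Koszul signs dictated by the super- and homological gradings), which is a degree $+1$ bimodule map satisfying $d_{D^{\otimes n}} H + H d_{C^{\otimes n}} = f^{\otimes n} - g^{\otimes n}$; this is the standard statement that homotopic maps have homotopic tensor powers. Each summand is a tensor product of bimodule maps, so $H$ is $(A_1^{\otimes n}, A_2^{\otimes n})$-bilinear. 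The one property $H$ lacks is $S_n$-equivariance: it singles out the position of the factor $h$, placing $g$'s to its left and $f$'s to its right.

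This failure of equivariance is the main obstacle, and it is precisely what the non-linearity of $(\cdot)^{[n]}$ prevents us from ignoring; I would resolve it by averaging, using $\mathrm{char}\,\k = 0$. Since $f^{\otimes n}$ and $g^{\otimes n}$, as well as the differentials $d_{C^{\otimes n}}$ and $d_{D^{\otimes n}}$, are $S_n$-equivariant (the maps $f, g$ preserve both gradings and $\Delta(d)$ is symmetric in the tensor factors), for any $\sigma \in S_n$ the conjugate $\sigma H \sigma^{-1}$ is again a homotopy from $f^{\otimes n}$ to $g^{\otimes n}$. Setting $\bar{H} := \tfrac{1}{n!} \sum_{\sigma \in S_n} \sigma H \sigma^{-1}$ therefore yields a homotopy with the same boundary, $d_{D^{\otimes n}} \bar{H} + \bar{H} d_{C^{\otimes n}} = f^{\otimes n} - g^{\otimes n}$, which is now manifestly $S_n$-equivariant and still $(A_1^{\otimes n}, A_2^{\otimes n})$-bilinear. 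By the first paragraph, $\bar{H} \rtimes \id$ is then the required homotopy between $f^{[n]}$ and $g^{[n]}$.

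The only remaining work is routine bookkeeping: verifying the Koszul signs in the telescoping homotopy $H$ and checking that the superpermutation conjugation in the averaging step produces the claimed signs. I would relegate these sign computations to a direct verification, since conceptually they are absorbed into the statement that conjugation by $\sigma$ carries one homotopy to another. Note that the argument genuinely uses characteristic zero at the averaging step, mirroring its use elsewhere in the paper (for instance in the splitting of $\k[S_n]$ into matrix algebras).
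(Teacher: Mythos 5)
Your proof is correct, and it differs from the paper's in a way that is worth spelling out. The paper starts from the same place you do --- the telescoping homotopy --- but then simply sets $h' = \sum_{i+j=n-1}(f^{\otimes i}\otimes h\otimes g^{\otimes j},1)$ and asserts that $f^{[n]}-g^{[n]} = d_{D^{[n]}}h' + h'd_{C^{[n]}}$, with no averaging step. That identity does hold (it is the usual telescoping computation, performed on the tensor factor, exactly as in your second paragraph), but the map $h'$ singles out which tensor positions carry $f$ and which carry $g$, so it is not $S_n$-equivariant: conjugating $f^{\otimes i}\otimes h\otimes g^{\otimes j}$ by a superpermutation produces a term whose pattern of $f$'s and $g$'s is not another summand of $h'$. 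Consequently $h'$, as written in the paper, does not commute with left multiplication by $\k[S_n]\subset A_1^{[n]}$, hence is not a morphism of $(A_1^{[n]},A_2^{[n]})$-bimodules --- which a chain homotopy in this category must be. Your averaging step $\bar{H} = \frac{1}{n!}\sum_{\sigma}\sigma H\sigma^{-1}$ is precisely what repairs this: since the differentials and $f^{\otimes n}-g^{\otimes n}$ are equivariant, each conjugate $\sigma H \sigma^{-1}$ is again a homotopy with the same boundary, so the average is an equivariant, bilinear homotopy, and your reduction to the semidirect-product extension $\bar{H}\rtimes\id$ is sound. What each approach buys: the paper's argument, if it were complete as stated, would be characteristic-free and shorter; your argument genuinely uses $\mathrm{char}\,\k=0$ (which the paper assumes globally in any case) but actually produces a homotopy living in the correct category. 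In short, you take the paper's construction and supply the symmetrization it is missing; the remaining sign bookkeeping you defer is no worse than the paper's own ``one can check.''
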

\begin{proof}
By assumption, there exists a chain homotopy $h$ with $f-g = d_{D}h + hd_{C}$.  We set 
$$h' = \sum_{i+j=n-1} (f^{\otimes i} \otimes h \otimes g^{\otimes j}, 1).$$
Then one can check that $f^{[n]}-g^{[n]} = d_{D^{[n]}}h' + h'd_{C^{[n]}}$.
\end{proof}

\subsection{Graded 2-categories}

A graded additive $\k$-linear 2-category $\K$ is a category enriched over graded additive $\k$-linear categories. This means that for any two objects $A,B \in \K$ the Hom category $\Hom_{\K}(A,B)$ is a graded additive $\k$-linear category. Moreover, the composition map $\Hom_{\K}(A,B) \times \Hom_{\K}(B,C) \to \Hom_{\K}(A,C)$ is a graded additive $\k$-linear functor.

{\bf Example.} Suppose $B_n$ is a sequence of graded $\k$-algebras indexed by $n \in \N$. Then one can define a 2-category $\K$ whose objects (0-morphisms) are indexed by $\N$, the 1-morphisms are graded $(B_m,B_n)$-bimodules and the 2-morphisms are maps of graded $(B_m,B_n)$-bimodules.

A graded additive $\k$-linear 2-functor $F: \K \to \K'$ is a (weak) 2-functor that maps the Hom categories $\Hom_{\K}(A,B)$ to $\Hom_{\K'}(FA,FB)$ by additive functors that commute with the auto-equivalence $\la 1 \ra$. 

An additive category $\mathcal{C}$ is said to be idempotent complete when every idempotent 1-morphism splits in $\mathcal{C}$. Similarly, we say that the additive 2-category $\K$ is idempotent complete when the Hom categories $\Hom_{\K}(A,B)$ are idempotent complete for any pair of objects $A, B \in \K$, (so that all idempotent 2-morphisms split). All 2-categories in this paper will be idempotent complete. 

\subsubsection{The homotopy 2-category}
If $\K$ is an additive $\k$-linear 2-category then one can define its homotopy 2-category $\Kom(\K)$ as follows. The objects are the same. The 1-morphisms are unbounded complexes of 1-morphisms in $\K$ while the 2-morphisms are maps of complexes. Two complexes of 1-morphisms are then deemed isomorphic if they are homotopy equivalent.

{\bf Example.} Denote by $\catC_{a}$ the 2-category of algebras.  Then in $\Kom(\catC_{a})$:
\begin{itemize}
\item objects are algebras over $\k$,
\item 1-morphisms from $A$ to $B$ are complexes of $(A,B)$-bimodules,
\item 2-morphisms are chain maps up to homotopy.
\end{itemize}
Combining Lemmas \ref{lem:[n]functor} and \ref{lem:homotopy} implies the following. 

\begin{prop}
For each $n \in \N$, the 2-functor $(\cdot) \mapsto (\cdot)^{[n]}$ defines an endofunctor of the 2-category $\Kom(\catC_{a})$.
\end{prop}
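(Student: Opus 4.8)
The plan is to verify that the assignment $(\cdot) \mapsto (\cdot)^{[n]}$ satisfies the three defining conditions of an endofunctor of $\Kom(\catC_a)$: it sends objects to objects, it sends $1$-morphisms (complexes of bimodules) to $1$-morphisms, and it sends $2$-morphisms (chain maps up to homotopy) to $2$-morphisms in a way that respects composition and identities. Since the objects of $\Kom(\catC_a)$ are the same algebras as in $\catC_a$, the action on objects $A \mapsto A^{[n]}$ is already given, so no new work is needed there.

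For the action on $1$-morphisms, I would first recall from the preceding discussion that if $M_\bullet$ is a complex of $(A_1,A_2)$-bimodules, then $M_\bullet^{[n]}$ is a well-defined complex of $(A_1^{[n]},A_2^{[n]})$-bimodules, where the boundary map is obtained via the coproduct $\Delta$ on $\k[d]/d^2$ as described in the excerpt. This is exactly the content needed to see that $(\cdot)^{[n]}$ carries a $1$-morphism of $\Kom(\catC_a)$ to a $1$-morphism. To check compatibility with composition of $1$-morphisms, I would invoke Lemma \ref{lem:[n]functor}: the natural isomorphism $M_2^{[n]} \otimes_{A_2^{[n]}} M_1^{[n]} \xrightarrow{\sim} (M_2 \otimes_{A_2} M_1)^{[n]}$ constructed there is defined termwise on bimodules, and one checks it is a chain map, hence descends to an isomorphism of complexes. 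Together with the obvious fact that the identity bimodule $A$ maps to the identity bimodule $A^{[n]}$, this gives compatibility with the composition and units of $1$-morphisms.

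The crux, and the step I expect to be the main obstacle, is the action on $2$-morphisms. In $\Kom(\catC_a)$ a $2$-morphism is a chain map taken \emph{up to homotopy}, so for $(\cdot)^{[n]}$ to be well-defined on $2$-morphisms it must respect the homotopy equivalence relation: homotopic chain maps must be sent to homotopic chain maps. This is precisely where the non-linearity of $(\cdot)^{[n]}$ (emphasized in the excerpt) threatens to cause trouble, since a chain homotopy is a linear combination of maps and $(\cdot)^{[n]}$ does not commute with sums. The resolution is Lemma \ref{lem:homotopy}, which shows that if $f \simeq g$ then $f^{[n]} \simeq g^{[n]}$, via the explicit homotopy $h' = \sum_{i+j=n-1} (f^{\otimes i} \otimes h \otimes g^{\otimes j}, 1)$. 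Thus $(\cdot)^{[n]}$ induces a well-defined map on homotopy classes of chain maps.

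Finally, I would note that functoriality of $(\cdot)^{[n]}$ on $2$-morphisms — that is, $(f_2 \circ f_1)^{[n]} = f_2^{[n]} \circ f_1^{[n]}$ and preservation of identities — already follows from Lemma \ref{lem:[n]functor} at the level of honest bimodule maps, and hence passes to homotopy classes. Putting these pieces together, $(\cdot)^{[n]}$ is a $2$-functor $\catC_a \to \catC_a$ by Lemma \ref{lem:[n]functor} that is compatible with the passage to complexes and descends to the homotopy relation by Lemma \ref{lem:homotopy}; therefore it defines an endofunctor of $\Kom(\catC_a)$, as claimed. The entire argument is a matter of assembling the two lemmas, with the homotopy-invariance of Lemma \ref{lem:homotopy} being the essential nontrivial input.
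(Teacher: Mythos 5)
Your proposal is correct and follows the same route as the paper, which simply observes that the Proposition follows by combining Lemma \ref{lem:[n]functor} (2-functoriality of $(\cdot)^{[n]}$ on $\catC_a$) with Lemma \ref{lem:homotopy} (preservation of homotopies). Your write-up fills in the assembly steps — action on objects, the chain-map check for the composition isomorphism, and the descent to homotopy classes — that the paper leaves implicit, but the essential input is identical.
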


\begin{Remark}
The above endofunctors appeared earlier in ~\cite{K}, which emphasized their relevance for constructing group actions on categories.
\end{Remark}

\subsubsection{Triangulated 2-categories}

A graded triangulated category is a graded category equipped with a triangulated structure where the autoequivalence $\la 1 \ra$ takes exact triangles to exact triangles. We denote the homological shift by $[\cdot]$ where $[1]$ denotes a downward shift by one. 

A graded triangulated $\k$-linear 2-category ${\K'}$ is a category enriched over graded triangulated $\k$-linear categories. This means that for any two objects $A,B \in \K'$ the Hom category $\Hom_{\K'}(A,B)$ is a graded additive $\k$-linear triangulated category. 

{\bf Example.} If $\K$ is a $\k$-linear 2-category then $\Kom(\K)$ is a triangulated 2-category. In the remainder of the paper this extra triangulated structure of $\Kom(\K)$ will not play a role and will usually be ignored. 

\section{Quantum Heisenberg algebras}\label{sec:hei}

Here we recall the quantum Heisenberg algebra $\h$ and its Fock space representation. We will denote the quantum integer by 
$$[n] := t^{-n+1} + t^{-n+3} + \dots + t^{n-3} + t^{n-1}.$$

The traditional presentation for the quantum Heisenberg algebra is as a unital algebra generated by $a_i(n)$, where $i \in \I$ and $n \in \Z \setminus \{0\}$.  The relations are
\begin{equation}\label{rel:as}
a_i(m) a_j(n) - a_j(n)a_i(m) = \delta_{m,-n} [n \la i,j \ra] \frac{[n]}{n}.
\end{equation}
When $q=1$, this presentation specializes to the standard presentation of the non-quantum Heisenberg algebra.  

For our purposes, a more convenient presentation of $\h$ takes as generators $\{P_i^{(n)},Q_i^{(n)} \}_{i \in \I, n \geq 0}$ subject to the following relations:
\begin{eqnarray*}
P_i^{(n)} P_j^{(m)} &=& P_j^{(m)} P_i^{(n)} \text{ and } Q_i^{(n)}Q_j^{(m)} = Q_j^{(m)} Q_i^{(n)} \text{ for all } i,j \in \I, \\
Q_i^{(n)} P_j^{(m)} &=&
\begin{cases}
\sum_{k \ge 0} [k+1] P_i^{(m-k)} Q_i^{(n-k)} & \text{ if } i=j, \\
P_j^{(m)} Q_i^{(n)} + P_j^{(m-1)} Q_i^{(n-1)} & \text{ if } \la i,j \ra = -1 \\
P_j^{(m)} Q_i^{(n)} & \text{ if } \la i,j \ra = 0.
\end{cases}
\end{eqnarray*}
By convention $P_i^{(0)} = Q_j^{(0)} = 1$ and $P_i^{(k)} = Q_i^{(k)} = 0$ when $k < 0$ so the summations in the relations above are all finite. Notice that $\h$ has a natural $\Z$-grading where $\deg P_i^{(n)} = n$ and $\deg Q_i^{(n)} = -n$.  

An explicit isomorphism between these two presentations is given in \cite{CL1} and \cite[Section 3.1]{CL2}.

\subsection{The Fock space}\label{sec:fock}
Let $\h^- \subset \h$ denote the subalgebra generated $\{ Q_i^{(n)} \}_{i \in I, n \ge 0}$. Let $\mbox{triv}_0$ denote the trivial (one-dimensional) representation of $\h^-$, where all $Q_i^{(n)}$ ($n>0$) act by zero. Then
$V_{Fock} := \Ind_{\h^-}^{\h}(\mbox{triv}_0)$
is called the Fock space representation of $\h$.

The Fock space has a basis given by elements of the form $P_{i_k}^{(n_k)} \dots P_{i_1}^{(n_1)}(v)$ where $v$ is a vector spanning $\mbox{triv}_0$. This gives a decomposition $V_{Fock} = \oplus_{n \ge 0} V_{Fock}(n)$. To simplify notation, we will denote $P_{i_k}^{(n_k)} \dots P_{i_1}^{(n_1)}(v)$ by $P_{i_k}^{(n_k)} \dots P_{i_1}^{(n_1)}$. 

For any partition $\l \vdash n$ on can define $P_i^{(\l)}$ using Giambelli's formula as the determinant
$$ [P_i^{\lambda}] = \mbox{det}_{kl} [P_i^{(\l_k + l-k)}].$$
For example, $P_i^{(1^2)} = P_iP_i - P_i^{(2)}$. See \cite[Section 7]{CL1} for more details. 

\subsection{The braid group action on $V_{Fock}$}\label{subsec:braidfock}

We now describe a braid group action on $V_{Fock}$. Since $V_{Fock}$ is multiplicatively generated by elements $P_i^{(n)}$ it suffices to describe this action on these generators and extend multiplicatively. On generators, the action is given by 
\begin{equation*}
\sigma_i(P_j^{(n)}) = 
\begin{cases} 
(-t^{-2})^n P_i^{(1^n)} & \text{ if } i=j \\
\sum_{k=0}^n (-t^{-1})^{n-k} P_j^{(k)} P_i^{(n-k)} & \text{ if } \la i,j \ra = -1 \\
P_j^{(n)} & \text{ if } \la i,j \ra = 0  
\end{cases}
\end{equation*}
and
\begin{equation*}
\sigma_i^{-1}(P_j^{(n)}) = 
\begin{cases}
(-t^{2})^n P_i^{(1^n)} & \text{ if } i=j \\
\sum_{k=0}^n (-t)^{n-k} P_j^{(k)} P_i^{(n-k)} & \text{ if } \la i,j \ra = -1 \\
P_j^{(n)} & \text{ if } \la i,j \ra = 0.
\end{cases}
\end{equation*}

\begin{prop}\label{prop:fockspace}
The endomorphisms $ \sigma_i $ and $ \sigma_i^{-1} $  for $ i \in I$ define a representation of the braid group $\Br(D)$ on each weight space $V_{Fock}(n)$ of the Fock space. 
\end{prop}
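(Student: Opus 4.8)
The plan is to verify directly that the endomorphisms $\sigma_i$ satisfy the braid relations of $\Br(D)$, namely the commutation relation $\sigma_i \sigma_j = \sigma_j \sigma_i$ when $\la i,j\ra = 0$ and the braid relation $\sigma_i \sigma_j \sigma_i = \sigma_j \sigma_i \sigma_j$ when $\la i,j\ra = -1$. Since $V_{Fock}$ is multiplicatively generated by the $P_k^{(n)}$ and both $\sigma_i$ and $\sigma_i^{-1}$ are defined on these generators and extended multiplicatively, it suffices to check each relation on an arbitrary generator $P_k^{(n)}$ and then argue that the relation propagates to products. The latter propagation is not automatic because the extension is multiplicative rather than linear, so I would first establish a compatibility lemma: each $\sigma_i$ (and its inverse) is well-defined as a multiplicative operator, which requires checking that the two formulas are genuine inverses, i.e. $\sigma_i \sigma_i^{-1} = \sigma_i^{-1}\sigma_i = \id$ on generators.

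For the invertibility step, when $\la i,j\ra = 0$ the formula is the identity and there is nothing to check. When $i = j$ one computes $\sigma_i^{-1}\sigma_i(P_i^{(n)}) = \sigma_i^{-1}\big((-t^{-2})^n P_i^{(1^n)}\big)$; since $\sigma_i^{-1}$ acts on $P_i^{(1^n)} = P_i^{(1)}\cdots$ (expanded via Giambelli) by sending each $P_i$ to $(-t^2) P_i^{(1)} = (-t^2) P_i$ and using that $\sigma_i^{-1}$ is multiplicative, one tracks how the power-sum/elementary symmetric duality under $\l \mapsto \l^t$ returns $P_i^{(n)}$ up to the compensating scalar $(-t^2)^n(-t^{-2})^n = 1$. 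The case $\la i,j\ra = -1$ is the genuinely computational one: one substitutes the sum $\sum_k (-t^{-1})^{n-k} P_j^{(k)} P_i^{(n-k)}$ into $\sigma_i^{-1}$, applies multiplicativity, and checks that the resulting double sum telescopes back to $P_j^{(n)}$, which amounts to a generating-function identity in the variable $t$.

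The heart of the proof is the braid relation $\sigma_i\sigma_j\sigma_i(P_k^{(n)}) = \sigma_j\sigma_i\sigma_j(P_k^{(n)})$ for an edge $\la i,j\ra = -1$. The nontrivial cases are $k=i$, $k=j$, and $k$ adjacent to $i$ or $j$; when $k$ is disjoint from both, each side reduces to the identity on $P_k^{(n)}$. For $k=i$ and $k=j$ one expands both composites as explicit (triple) sums over partitions and subdivisions, using the rules for $\sigma$ on $P^{(n)}$, $P^{(1^n)}$, and products, and the key structural input is the antipode-type symmetry $\tau(e_\l) = e_{\l^t}$ together with Giambelli's determinantal formula relating $P_i^{(\l)}$ and $P_i^{(1^n)}$. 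I expect the decisive simplification to come from recognizing both sides as the same symmetric-function expression after a change of summation variable, with the powers of $(-t^{-1})$ and $(-t^{-2})$ combining via a quantum-binomial identity. \textbf{The main obstacle} is organizing this comparison cleanly: the naive expansion produces sums over pairs of partitions weighted by powers of $t$, and matching the two orderings $\sigma_i\sigma_j\sigma_i$ versus $\sigma_j\sigma_i\sigma_j$ requires finding the right bijection on indexing data so that the coefficients agree term by term. A convenient way to sidestep brute force is to reinterpret the $\sigma_i$ as the specialization of the categorical complexes $\T_i$ to the Grothendieck group (or equivalently as vertex-operator intertwiners), in which case the braid relation follows formally from Theorem \ref{thm:main1}; but since Proposition \ref{prop:fockspace} is stated independently, I would give the self-contained generating-function verification as the primary argument.
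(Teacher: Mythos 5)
Your primary argument has a genuine gap: the braid relation itself is never proved. Everything preceding it is routine --- indeed, once each $\sigma_i^{\pm 1}$ is known to be an algebra endomorphism of the (commutative, polynomial) algebra generated by the $P_j^{(n)}$, any relation verified on generators propagates automatically to all of $V_{Fock}$, since compositions of algebra endomorphisms agreeing on generators agree everywhere; so the ``propagation'' issue you flag is not the real difficulty. The substance of the proposition is the identity $\sigma_i\sigma_j\sigma_i = \sigma_j\sigma_i\sigma_j$ on each generator $P_k^{(n)}$ when $\la i,j \ra = -1$, and for this you only describe what an expansion would look like and then name the ``main obstacle'' (a bijection on indexing data matching the $t$-coefficients) without resolving it. That obstacle \emph{is} the proof. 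For calibration: the paper's Lemma \ref{lemma1} computes just one side of one such relation, $\sigma_i\sigma_j\sigma_i(P_i^{(n)}) = t^{-3n}P_j^{(1^n)}$, needs a Pieri-type identity $P_i^{(m)}P_i^{(1^n)} = P_i^{(m,1^n)} + P_i^{(m+1,1^{n-1})}$ to do so, and omits the companion computation as ``more involved.''

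Your proposed shortcut also fails as stated, because it invokes the wrong categorical theorem. Decategorifying Theorem \ref{thm:main1} on $\K_{Fock}$ produces a braid group action on $V_{Fock}(n)$ by the operators $[\T_i\1_n] = \sum_{\l}(-t^{-1})^{|\l|}\,P_i^{(\l)}Q_i^{(\l^t)}$, which are \emph{not} visibly the multiplicative endomorphisms $\sigma_i$ of section \ref{subsec:braidfock}; identifying the two is an intertwining statement (conjugation by $[\T_i]$ equals $\sigma_i$, applied to the vacuum) whose categorical form the paper records only as Conjecture \ref{conj:intertwiner}. So nothing about the $\sigma_i$ ``follows formally'' from Theorem \ref{thm:main1}. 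What the paper actually does is use its \emph{other} categorical action: the explicit action of $\s_i^{\pm 1}$ on $\Kom(\H')$ from section \ref{sec:braidH}, i.e.\ Theorem \ref{thm:main2}. That action is monoidal by construction, so its shadow on $K_0$ is automatically multiplicative; Proposition \ref{prop:2} computes that this shadow reproduces the defining formulas for $\sigma_i^{\pm 1}$ on generators (e.g.\ $\s_i(\P_i^{(n)}) \cong \P_i^{(1^n)}\la -2n \ra[n]$ decategorifies to $(-t^{-2})^n P_i^{(1^n)}$), and Remark \ref{rem:2} identifies $K_0(\K_{Fock})$ with $V_{Fock}$. Those three facts constitute the paper's entire proof. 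To repair your proposal, either replace the appeal to Theorem \ref{thm:main1} by an appeal to Theorem \ref{thm:main2} together with Proposition \ref{prop:2}, or actually carry out the generating-function verification you sketch.
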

\begin{proof}
This is a consequence of Proposition \ref{prop:2} and Remark \ref{rem:2} following it. 
\end{proof}

It is already interesting to see this braid action on various basis vectors, as in the example below. 

\begin{lemma}\label{lemma1} Suppose $i,j,k \in I$ are different with $\la i,j \ra = -1 = \la j,k \ra$ and $\la i,k \ra = 0$. Then 
\begin{eqnarray*}
\sigma_i \sigma_j \sigma_i(P_i^{(n)}) &=& t^{-3n} P_j^{(1^n)} \\
\sigma_j \sigma_i \sigma_j(P_k^{(n)}) &=& \sum_{a=0}^n \sum_{b=0}^{n-a} (-1)^b t^{-2(n-a)+b} P_{k}^{(a)} P_{j}^{(b)} P_{i}^{(n-a-b)}
\end{eqnarray*}
\end{lemma}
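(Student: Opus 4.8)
The plan is to verify both identities by direct computation using the braid group action on generators defined in Section \ref{subsec:braidfock}, extended multiplicatively. Since all the $\sigma_i$ are given explicitly on each $P_j^{(n)}$, the entire computation is algorithmic; the main task is to organize it so that the braid relations (which hold by Proposition \ref{prop:fockspace}) can be exploited to simplify the arithmetic.

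For the first identity, I would begin by applying $\sigma_i$ to $P_i^{(n)}$, obtaining $(-t^{-2})^n P_i^{(1^n)}$. The key structural point is that $P_i^{(1^n)}$ is defined via Giambelli's formula as a determinant in the $P_i^{(m)}$, so I must understand how $\sigma_j$ acts on such a determinant. Since $\la i,j \ra = -1$, each $P_i^{(m)}$ maps to $\sum_{k=0}^m (-t^{-1})^{m-k} P_i^{(k)} P_j^{(m-k)}$, and one checks (this is the cleanest way to proceed) that $\sigma_j$ is an algebra-type operation compatible with the determinant, so that $\sigma_j(P_i^{(1^n)})$ can be expressed in terms of the $P_i, P_j$. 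I would then apply the final $\sigma_i$, using $\la i,j \ra = -1$ again and $\sigma_i(P_i^{(m)}) = (-t^{-2})^m P_i^{(1^m)}$, and collect powers of $t$. Because Proposition \ref{prop:fockspace} guarantees $\sigma_i \sigma_j \sigma_i = \sigma_j \sigma_i \sigma_j$, a useful shortcut is to instead compute $\sigma_j \sigma_i \sigma_j(P_i^{(n)})$, whichever side produces fewer intermediate terms, and confirm it collapses to the claimed $t^{-3n} P_j^{(1^n)}$. The repeated appearance of the factor $(-t^{-2})^n$ together with the $(-t^{-1})$ contributions should telescope to give the total power $t^{-3n}$.

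For the second identity, the relevant Dynkin data is $\la i,j \ra = -1 = \la j,k \ra$ but $\la i,k \ra = 0$, so $\sigma_i$ fixes $P_k^{(n)}$. I would compute $\sigma_j \sigma_i \sigma_j(P_k^{(n)})$ from the inside out: first $\sigma_j(P_k^{(n)}) = \sum_{k'} (-t^{-1})^{n-k'} P_k^{(k')} P_j^{(n-k')}$ since $\la j,k \ra = -1$; then apply $\sigma_i$, which fixes each $P_k^{(k')}$ (as $\la i,k \ra = 0$) but acts nontrivially on $P_j^{(n-k')}$ (as $\la i,j \ra = -1$), producing a double sum; and finally apply $\sigma_j$ once more, using $\la i,j \ra = -1$ and $\la j,k \ra = -1$ on the resulting monomials. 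The target expression is the double sum $\sum_{a=0}^n \sum_{b=0}^{n-a} (-1)^b t^{-2(n-a)+b} P_k^{(a)} P_j^{(b)} P_i^{(n-a-b)}$, so the final step is to match indices ($a$ indexing the $P_k$ factor, $b$ the $P_j$ factor) and verify that the accumulated powers of $-t^{-1}$ from the three applications combine to exactly $(-1)^b t^{-2(n-a)+b}$.

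The main obstacle I anticipate is purely bookkeeping: tracking the powers of $t$ and the signs through the nested sums, especially in the first identity where Giambelli's determinantal definition of $P_i^{(1^n)}$ must be pushed through $\sigma_j$. To control this I would rely on the multiplicativity of the action and, where possible, on the commutativity relations $P_i^{(n)} P_j^{(m)} = P_j^{(m)} P_i^{(n)}$ to bring monomials into a standard ordered form before collecting coefficients. The braid relation of Proposition \ref{prop:fockspace} provides an independent consistency check at each stage, which I would use to catch sign or exponent errors rather than as a logical input to the derivation.
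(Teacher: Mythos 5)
Your overall route---direct computation from the formulas in Section \ref{subsec:braidfock}, using that each $\sigma_i$ extends multiplicatively and is therefore compatible with the Giambelli determinant defining $P_i^{(1^n)}$---is the same as the paper's, and the setup steps you describe are correct. But there is a genuine gap at the point where you say the answer emerges by "collecting powers of $t$" and that the contributions "should telescope to give the total power $t^{-3n}$." After applying $\sigma_i\sigma_j\sigma_i$ to $P_i^{(n)}$ one does not obtain a single monomial whose exponent needs checking; one obtains a double sum of the form
$$(-t^{-2})^n\sum_{a=0}^{n}\sum_{b=0}^{n-a}\pm\, t^{-2n+b}\, P_j^{(1^b)}P_i^{(a)}P_i^{(1^{n-a-b})},$$
which is exactly the intermediate expression in the paper's proof. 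The terms with $b<n$ are honest nonzero vectors of $V_{Fock}$, and no bookkeeping of signs and exponents makes them disappear. What kills them is the Pieri-type identity $P_i^{(m)}P_i^{(1^c)} = P_i^{(m,1^c)}+P_i^{(m+1,1^{c-1})}$ (the paper cites \cite[Prop.~1]{CL2}): expanding each mixed product $P_i^{(a)}P_i^{(1^{n-a-b})}$ in the basis $\{P_i^{(\l)}\}$, the alternating sum over $a$ telescopes to zero for every fixed $b<n$, leaving only the $b=n$ term, which is $t^{-3n}P_j^{(1^n)}$. This cancellation mechanism is the heart of the proof and is absent from your plan; "collect powers of $t$" is not a step that can be executed as stated.

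The same issue is more serious in the second identity. Under the outermost $\sigma_j$, the factors $P_j^{(b')}$ produced at the middle stage map to $(-t^{-2})^{b'}P_j^{(1^{b'})}$, i.e.\ to antisymmetric divided powers, which do not even appear in the target expression $\sum_{a,b}(-1)^b t^{-2(n-a)+b}P_k^{(a)}P_j^{(b)}P_i^{(n-a-b)}$; moreover the outer $\sigma_j$ also creates additional $P_j^{(\cdot)}$ factors from both $P_k^{(\cdot)}$ and $P_i^{(\cdot)}$, so several $P_j$-type factors must be multiplied together and re-expanded before any coefficients can be compared. Thus "matching indices and verifying the accumulated powers of $-t^{-1}$" cannot close the argument; one needs the same symmetric-function recombination, carried out in a substantially more involved way (which is precisely why the paper omits this case). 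Your use of Proposition \ref{prop:fockspace} only as a consistency check is legitimate, since it is established independently of Lemma \ref{lemma1}, but the proof itself requires the Pieri-rule cancellation to be identified and used.
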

\begin{proof}
By definition we have 
$$\sigma_i \sigma_j \sigma_i (P_i^{(n)}) = (-t^{-2})^n \sum_{a=0}^n \sum_{b=0}^{n-a} (-1)^{a+n} t^{-2n+b} P_j^{(1^b)} P_i^{(a)} P_i^{(1^{n-a-b})}.$$
This simplifies to give $t^{-3n} P_j^{(1^n)}$ if we use the identity $P_i^{(m)} P_i^{(1^n)} = P_i^{(m,1^n)} + P_i^{(m+1,1^{n-1})}$ (see for instance \cite[Prop. 1]{CL2}). The second identity is similar but more involved so we omit the proof. 
\end{proof}

The action of $ \sigma_i^{\pm 1} $ on the usual generators $ a_j(-n) $ of the Fock space has a somewhat easier description. The proof is a straightforward calculation using the generating functions in \cite[Sect. 2.2.1]{CL1}.

\begin{equation*}
\sigma_i(a_j(-n)) = 
\begin{cases} 
-t^{-2n} a_i(-n) & \text{ if } i=j \\
a_j(-n)+(-1)^nt^{-n} a_i(-n) & \text{ if } \la i,j \ra = -1 \\
a_j(-n) & \text{ if } \la i,j \ra = 0  
\end{cases}
\end{equation*}
and
\begin{equation*}
\sigma_i^{-1}(a_j(-n)) = 
\begin{cases} 
-t^{2n} a_i(-n) & \text{ if } i=j \\
a_j(-n)+(-1)^nt^{n} a_i(-n) & \text{ if } \la i,j \ra = -1 \\
a_j(-n) & \text{ if } \la i,j \ra = 0.  
\end{cases}
\end{equation*}

\section{2-representations of $\h$ and the braid complex}\label{sec:2heis}

In this section we review some facts about 2-representations of $\h$ and define the braid complex $\T_i \1_n$.

\subsection{The 2-category $\H$}\label{sec:2cat}

In \cite{CL1} we introduced a 2-category $\H^\Gamma$ associated to any finite subgroup $\Gamma \subset SL_2(\C)$. In our current language this 2-category is associated to the pair $(D,\ep)$ where $D$ is the affine Dynkin diagram corresponding to $\Gamma$ by the McKay correspondence and $\ep$ is an appropriately chosen orientation of $D$.

That definition generalizes with no effort to give a 2-category $\H^D_\ep$ associated to any simply laced Dynkin diagram $D$ and orientation $\ep$. More precisely, $\H^D_\ep$ is the (idempotent closure of) the additive, graded, $\k$-linear 2-category where
\begin{itemize}
\item 0-morphisms (objects) are indexed by the integers $\Z$,
\item 1-morphisms consist of the identity 1-morphisms $\1_n$ of $n \in \Z$ and compositions, direct sums and grading shifts of $\P_i \1_n: n \rightarrow n+1$ and $\1_n \Q_i: n+1 \rightarrow n$ for $i \in I$,
\item The 2-morphisms are generated by adjunction maps, making $\P_i$ and $\Q_i$ bi-adjoint up to shift, together with certain maps $X_i^j\in \Hom(\P_i,\P_j)$ and $T_{ij}\in \Hom(\P_j\P_i,\P_i\P_j)$ satisfying a series of relations. 
\end{itemize}

The generating 2-morphisms and their relations were described diagrammatically in \cite{CL1} and reviewed in \cite[Section 3.2]{CL2}. In the interest of space, we have elected not to spell them out again. However, the essential algebraic structure of these 2-morphisms is straightforward to summarize, and is enough for the purposes of the current paper:
\begin{itemize}
\item The 1-morphisms $\P_i$ and $\Q_i$ are left and right adjoint to one another, up to a grading shift.
\item For each $n\geq 0$, there is a natural injective map
$$B^D_\ep(n) \longrightarrow \End((\bigoplus_{i \in I} \P_i)^n).$$
In particular, for each $i\in I$ there is an embedding $\k[S_n]\longrightarrow \End(\P_i^n)$.  Since $\H^D_\ep$ is idempotent complete, for any partition $\l\vdash n$ we can define the 1-morphism $\P_i^{(\l)}$ as the image of the idempotent $e_\l \in \k[S_{|\l|}]$ acting on $\P_i^{|\l|}$.
\end{itemize}

From hereon we will fix $D$ and $\ep$ and denote $\H_\ep^D$ simply by $\H$ in order to simplify notation. 

\subsection{2-representation of $\h$}\label{sec:2rep}
A {\em 2-representation of $\h$} consists of a graded, idempotent complete $\k$-linear category $\K$ where 
\begin{itemize}
\item 0-morphisms are graded, $\k$-linear, additive categories $\D(n)$,
\item 1-morphisms are (certain types of) functors between these categories,
\item 2-morphisms are natural transformations of these functors
\end{itemize}
together with a 2-functor $\H \rightarrow \K$. We also require that the space of 2-morphisms between any two 1-morphisms in $\K$ be finite dimensional and that $\Hom_\K(\1_n, \1_n \la \ell \ra)$ is zero if $\ell < 0$ and one-dimensinal if $\ell=0$. 

The fact that the space of maps between any two 1-morphisms is finite dimensional means that the Krull-Schmidt property holds. Thus any 1-morphism has a unique direct sum decomposition (see section 2.2 of \cite{Rin}). Note that if $\K$ satisfies the Krull-Schmidt property then so does $\Kom(\K)$. 

A 2-representation of $\h$ is said to be \emph{integrable} if $\1_n=0$ are zero for $n \ll 0$. For example, in \cite{CL1} we constructed an integrable 2-representation where $n$ corresponded to the category of coherent sheaves on the Hilbert scheme of points $\mbox{Hilb}^n(\widehat{\C^2/\Gamma})$ where $\Gamma \subset SL_2(\C)$ is the finite subgroup associated to our Dynkin diagram. 

\subsection{The 2-representation $\K_{Fock}$}

The 2-representation $\K_{Fock}$ categorifies the Fock space representation $V_{Fock}$. It consists of 
\begin{itemize}
\item 0-morphisms: $n$ indexes the category of projective $B^D_\ep(n)$-modules.
\item 1-morphisms: $(B^D_\ep(n), B^D_\ep(n'))$-bimodules which are direct summands of tensor products of the bimodules $\P_i \1_n$ and $\1_n \Q_i$. 
\item 2-morphisms: all maps of bimodules. 
\end{itemize}
The 2-functor $\H \rightarrow \K_{Fock}$ was defined in \cite[Section 9]{CL1}. 

\subsection{Some technical facts}\label{sec:technical}

We gather some useful facts about 2-representations of $\h$. 

\begin{lemma}\label{lem:1}
For an arbitrary partition $\l$ we have
$$\Q_i^{(\l)} \P_i \cong \P_i \Q_i^{(\l)} \bigoplus_{\l' \subset \l} \Q_i^{(\l')} \la -1,1 \ra \text{ and } \Q_i \P_i^{(\l)} \cong \P_i^{(\l)} \Q_i \bigoplus_{\l' \subset \l} \P_i^{(\l')} \la -1,1 \ra$$
where the sums are over all $\l' \subset \l$ with $|\l'| = |\l|-1$. 
\end{lemma}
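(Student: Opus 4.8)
The plan is to prove the first isomorphism $\Q_i^{(\l)} \P_i \cong \P_i \Q_i^{(\l)} \oplus \bigoplus_{\l' \subset \l} \Q_i^{(\l')} \la -1,1 \ra$ and then obtain the second by applying the involution $\tau$ / symmetry swapping $\P_i$ and $\Q_i$. The starting point is the basic Heisenberg isomorphism (\ref{eq:heiseq}), namely $\Q_i \P_i \cong \P_i \Q_i \oplus \1 \la -1 \ra \oplus \1 \la 1 \ra$ (where $\la -1,1\ra$ abbreviates $\la -1\ra \oplus \la 1 \ra$). The idea is that $\Q_i^{(\l)}$ is by definition a direct summand of $\Q_i^{k}$ (with $k = |\l|$) cut out by the idempotent $e_\l \in \k[S_k]$, so I want to commute a single $\P_i$ past the whole string $\Q_i^k$ and then keep track of how the symmetric group idempotent interacts with the resulting terms.

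Let me describe the key steps in order. First, I would establish the decomposition $\Q_i^k \P_i \cong \P_i \Q_i^k \oplus \big(\Q_i^{k-1}\la -1\ra \oplus \Q_i^{k-1}\la 1\ra\big)^{?}$ by repeatedly applying (\ref{eq:heiseq}): pushing $\P_i$ leftward through each $\Q_i$ produces one ``pass-through'' term together with a term where one $\Q_i \P_i$ pair is replaced by $\1\la\pm1\ra$, leaving $\Q_i^{k-1}$. Organizing this telescoping carefully yields $\Q_i^k \P_i \cong \P_i \Q_i^k \oplus (\Q_i^{k-1})^{\oplus k}\la -1 \ra \oplus (\Q_i^{k-1})^{\oplus k}\la 1\ra$, or more precisely the correct multiplicities dictated by the natural-transformation structure. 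The second and crucial step is to apply the idempotent $e_\l$ on the left factor $\Q_i^k$. The pass-through term $\P_i \Q_i^k$ contributes $\P_i \Q_i^{(\l)}$ since the $\k[S_k]$-action on the $\Q_i^k$ string is unaffected by $\P_i$ sitting on the far left. For the correction terms, I need to understand how $e_\l$ acting on $\Q_i^k$ restricts to the $S_{k-1} \subset S_k$ that acts on the surviving $\Q_i^{k-1}$; this is exactly the branching rule for symmetric groups, which says $\mathrm{Res}^{S_k}_{S_{k-1}} \l = \bigoplus_{\l' \subset \l,\, |\l'|=|\l|-1} \l'$, each with multiplicity one. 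This branching is what produces the sum over $\l' \subset \l$ with $|\l'| = |\l| - 1$, with each $\Q_i^{(\l')}$ appearing once in the $\la -1\ra$ part and once in the $\la 1 \ra$ part.

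Making the branching argument precise is where the real care is needed, because the natural transformations implementing $\Q_i \P_i \cong \P_i\Q_i \oplus \1\la\pm1\ra$ must be compatible with the $\k[S_k]$-action coming from the embedding $\k[S_k] \hookrightarrow \End(\Q_i^k)$. Concretely, I would invoke the structure of the 2-morphisms in $\H$ (the maps $X_i$, $T_{ij}$ and the adjunctions summarized in Section \ref{sec:2cat}) to check that commuting $\P_i$ past $\Q_i^k$ intertwines the $S_k$-action on the outgoing $\Q_i^k$ with the $S_{k-1}$-action on each correction term $\Q_i^{k-1}$ in the expected way. Once this intertwining is verified, splitting by $e_\l$ is purely representation-theoretic and the branching rule finishes the computation.

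\textbf{The main obstacle} I anticipate is precisely this compatibility check: tracking how the idempotent $e_\l$ decomposes when a $\Q_i\P_i$ pair collapses to $\1\la\pm1\ra$, and confirming that the induced action on the residual $\Q_i^{k-1}$ is genuinely the restriction to $S_{k-1}$ rather than some twisted or permuted action (the supersign conventions of Section \ref{sec:wreath} make this bookkeeping delicate). If the natural transformations are known to respect the symmetric group actions — as they are designed to in the definition of $\H$ — then this reduces cleanly to the classical branching rule; otherwise one must argue it by hand using the defining relations among the generating 2-morphisms.
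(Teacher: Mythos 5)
The paper offers no proof of this lemma beyond a citation to Lemma 3.3 of \cite{CL2}, and your sketch is exactly the standard argument underlying that result: telescope the basic relation $\Q_i \P_i \cong \P_i \Q_i \oplus \1 \la -1 \ra \oplus \1 \la 1 \ra$ through $\Q_i^k$, verify that the resulting isomorphism $\Q_i^k \P_i \cong \P_i \Q_i^k \oplus (\Q_i^{k-1})^{\oplus k}\la -1,1 \ra$ is equivariant for $\k[S_k]$ (with the correction terms carrying the structure of $\mathrm{Ind}_{S_{k-1}}^{S_k}$ applied to $\Q_i^{k-1}$, so that hitting with $e_\l$ reduces via Frobenius reciprocity to the classical branching rule), which is precisely the compatibility check you correctly flag as the crux and which is carried out using the graphical 2-morphism relations of $\H$ in the cited source. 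So your proposal is correct and takes essentially the same route as the proof the paper points to.
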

\begin{proof}
Lemma 3.3 \cite{CL2}. 
\end{proof}

\begin{lemma}\label{lem:2}
Suppose $\l,\l',\mu$ and $\mu'$ are partitions such that $|\l| > |\l'|$ and $|\mu|>|\mu'|$. Then $\dim \Hom( \P_i^{(\l)} \Q_i^{(\mu)}, \P_i^{(\l')} \Q_i^{(\mu')} \la 1 \ra) \le 1$ with equality if and only if $\l' \subset \l$ and $\mu' \subset \mu$ with $|\l| = |\l'|+1$ and $|\mu|=|\mu'|+1$. In this case, the map is spanned by the composition 
$$\P_i^{(\l)} \Q_i^{(\mu)} \longrightarrow \P_i^{(\l')} \P_i \Q_i \Q_i^{(\mu')} \longrightarrow \P_i^{(\l')} \Q_i^{(\mu')} \la 1 \ra$$
where the second map is given by adjunction. 
\end{lemma}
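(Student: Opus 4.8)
The plan is to reduce the computation of $\dim \Hom(\P_i^{(\l)} \Q_i^{(\mu)}, \P_i^{(\l')} \Q_i^{(\mu')} \la 1 \ra)$ to an application of adjunction together with the commutation rule of Lemma \ref{lem:1}. First I would use biadjunction (up to grading shift) of $\P_i$ and $\Q_i$ to rewrite the Hom space. Since $\P_i^{(\mu)}$ and $\Q_i^{(\mu)}$ are built as images of conjugate idempotents under the involution $\tau$ (recall $\tau(e_\l)=e_{\l^t}$), the right adjoint of $\Q_i^{(\mu)}$ is $\P_i^{(\mu)}$ up to a shift. Moving $\Q_i^{(\mu)}$ across the Hom via its adjoint converts the target into a composite of the form $\P_i^{(\l')}\Q_i^{(\mu')}\P_i^{(\mu)}\la \cdot \ra$ (modulo keeping careful track of the grading shift). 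The upshot is that the problem becomes computing $\Hom$ from $\P_i^{(\l)}$ into something of the form $\P_i^{(\l')} \cdot (\Q_i^{(\mu')}\P_i^{(\mu)})$ with an overall shift.

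Second, I would expand $\Q_i^{(\mu')}\P_i^{(\mu)}$ using Lemma \ref{lem:1}, which (after iterating, or in the single-box version directly relevant here) gives a decomposition into a sum of terms $\P_i^{(\nu)}\Q_i^{(\nu')}$ with controlled grading shifts; the summands where $|\nu|<|\mu|$ come with shifts $\la -1,1\ra$, while the ``leading'' term preserves sizes. Combined with the hypothesis $|\l|>|\l'|$ and $|\mu|>|\mu'|$, the grading bookkeeping forces all but one shift to be incompatible with landing in degree exactly $\la 1\ra$. Concretely, the net grading of a candidate map is governed by $|\l|-|\l'|$ and $|\mu|-|\mu'|$, and the condition $\Hom(\,\cdot\,,\,\cdot\,\la 1\ra)\ne 0$ will only survive when each of these differences equals $1$, i.e. $|\l|=|\l'|+1$ and $|\mu|=|\mu'|+1$. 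I would then invoke the defining property $\dim\Hom_\K(\1_n,\1_n\la 0\ra)=1$ and vanishing in negative degrees to pin the surviving contribution down to a one-dimensional space, and to see that the nonzero map must be the explicit composite through $\P_i^{(\l')}\P_i\Q_i\Q_i^{(\mu')}$ displayed in the statement.

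The third ingredient is the containment conditions $\l'\subset\l$ and $\mu'\subset\mu$. These should emerge from the representation theory of the symmetric group (equivalently, from Pieri/branching): a nonzero map $\P_i^{(\l)}\to \P_i^{(\l')}\P_i$ exists precisely when $\l/\l'$ is a single box, since $\P_i^{(\l')}\P_i$ decomposes via the branching rule into $\bigoplus_{\l'\subset\l,\,|\l|=|\l'|+1}\P_i^{(\l)}$, and each such summand appears with multiplicity one. The same reasoning applied on the $\Q$ side (using $\tau$) yields $\mu'\subset\mu$. Assembling these, the Hom space is nonzero exactly under the stated combinatorial constraints, and the explicit composite is nonzero there, giving dimension exactly $1$.

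\textbf{Main obstacle.} The delicate part will be the grading bookkeeping across adjunction: the adjunction maps carry nontrivial shifts, and the decompositions in Lemma \ref{lem:1} and the branching of $\P_i^{(\l')}\P_i$ each contribute shifts, so I must track these carefully to confirm that the single surviving term lands in internal degree $\la 1\ra$ (and not, say, $\la -1\ra$ or $\la 3\ra$) and that all other terms land in degrees where $\Hom$ vanishes by the one-dimensionality/positivity hypothesis on $\Hom_\K(\1_n,\1_n\la \ell\ra)$. A secondary subtlety is verifying that the explicit composite in the statement is genuinely nonzero rather than merely landing in the correct degree; this I expect to check by composing with a suitable adjunction unit/counit and reducing to the identity $2$-morphism on $\1_n$, using that $\Hom_\K(\1_n,\1_n)$ is one-dimensional.
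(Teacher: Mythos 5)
A preliminary remark: the paper itself does not prove this lemma --- its proof field is just the citation \cite[Lemma 3.4]{CL2}. The real benchmarks are therefore the argument in \cite{CL2} and the paper's own proof of the companion Lemma \ref{lem:3}, both of which are adjunction/commutation/induction arguments of exactly the shape you outline; so your skeleton (biadjunction, Lemma \ref{lem:1}, Pieri--branching for the idempotents $e_\l$, degree counting, and the axiom on $\Hom_\K(\1_n,\1_n\la \ell \ra)$) is the right one.

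The genuine gap is in the mechanism you propose for finishing: ``invoke the defining property $\dim\Hom_\K(\1_n,\1_n\la 0 \ra)=1$ and vanishing in negative degrees.'' That axiom says nothing in positive degrees, and there these spaces are genuinely nonzero in the intended examples: in $\K_{Fock}$, $\Hom(\1_1,\1_1\la 2 \ra)$ is the degree-two part of the center of the zigzag algebra, which is nonzero. So a summand of your decomposition landing in a positive shift cannot be discarded; and, more importantly, the adjunction moves by themselves never reach pure $\Hom(\1_n,\1_n\la \ell \ra)$ spaces. Already in the smallest case $\l=\mu=(1)$, $\l'=\mu'=\emptyset$, adjunction identifies $\Hom(\P_i\Q_i\1_n,\1_n\la 1 \ra)$ with $\End(\Q_i\1_n)$, and computing $\End(\Q_i\1_n)$ by adjunction plus Lemma \ref{lem:1} returns $\k\oplus\Hom(\1_{n-1},\P_i\Q_i\1_{n-1}\la -1 \ra)$ --- a space of the very kind one started with, just at weight $n-1$ with a more negative shift. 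Closing this loop requires an inductive structure your plan never sets up: induction on $|\l|+|\mu|$ (as in the paper's proof of Lemma \ref{lem:3}), with the recursion grounded either by integrability ($\1_m=0$ for $m\ll 0$, so the descending chain of weights terminates) or by working inside $\H$ itself, where a diagrammatic basis theorem is available. Without one of these inputs the argument is circular, because the listed axioms of a 2-representation do not by themselves determine $\End(\Q_i\1_n)$. Your remaining ingredients --- multiplicity one from the branching rule on both the $\P$ and $\Q$ sides, the $\tau$/transpose bookkeeping in identifying the adjoint of $\Q_i^{(\mu)}$, and the zigzag-identity check that the displayed composite is nonzero --- are correct, and are exactly what is needed once the induction is in place.
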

\begin{proof}
Lemma 3.4 \cite{CL2}. 
\end{proof}

\begin{lemma}\label{lem:3}
Consider partitions $\l,\l',\mu,\mu'$ such that $|\l|-|\l'|=2=|\mu|-|\mu'|$. Then 
\begin{equation}\label{eq:hom}
\Hom(\P_i^{(\l)} \Q_i^{(\mu)}, \P_i^{(\l')} \Q_i^{(\mu')} \la 2 \ra)
\end{equation}
is zero unless $\l' \subset \l$ and $\mu' \subset \mu$ in which case its dimension is equal to 
$$\begin{cases}
2 \hspace{.5cm} \text{ if } \l \setminus \l' \text{ and } \mu \setminus \mu' \text{ both consist of two boxes in different rows and columns, } \\ 
0 \hspace{.5cm} \text{ if } \l \setminus \l' \text{ consists of two boxes in same row (resp. column) } \\
\hspace{.6cm} \text{ while } \mu \setminus \mu' \text{ consists of two boxes in same column (resp. row), } \\
1 \hspace{.5cm} \text{ otherwise. } 
\end{cases}$$
\end{lemma}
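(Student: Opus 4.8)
The plan is to reduce the two--box statement to the single--box computation of Lemma~\ref{lem:2} by factoring a degree--$2$ map through intermediate partitions, and then to organize the resulting count as a Hom--space between representations of $S_2$. First I would record the easy vanishing and set up the reduction. Using the bi--adjunction of $\P_i$ and $\Q_i$ (up to grading shift), one rewrites
$$\Hom(\P_i^{(\l)}\Q_i^{(\mu)}, \P_i^{(\l')}\Q_i^{(\mu')}\la 2 \ra) \cong \Hom(\Q_i^{(\l')}\P_i^{(\l)}, \Q_i^{(\mu')}\P_i^{(\mu)}\la 2+c\ra),$$
where $c$ is an explicit shift coming from the adjunction units and counits; this presentation is symmetric in the roles of $\l$ and $\mu$. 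Iterating Lemma~\ref{lem:1} to expand $\Q_i^{(\l')}\P_i^{(\l)}$ puts it in ``normal order'' as a sum of summands $\P_i^{(\nu)}\Q_i^{(\nu')}\la\cdot\ra$ with $\nu\subset\l$ and $\nu'\subset\l'$; one then checks that no such summand can support a nonzero degree--$(2+c)$ map unless $\l'\subset\l$ and $\mu'\subset\mu$, which gives the asserted vanishing. From now on assume $\l'\subset\l$ and $\mu'\subset\mu$, so that $\l\setminus\l'$ and $\mu\setminus\mu'$ are genuine two--box skew shapes.

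For the main count I would factor a degree--$2$ map as a composite of two degree--$1$ maps. For each intermediate pair $(\l'',\mu'')$ with $\l'\subset\l''\subset\l$, $\mu'\subset\mu''\subset\mu$ and $|\l''|=|\l|-1$, $|\mu''|=|\mu|-1$, Lemma~\ref{lem:2} supplies canonical one--dimensional spaces of maps
$$\P_i^{(\l)}\Q_i^{(\mu)} \to \P_i^{(\l'')}\Q_i^{(\mu'')}\la 1\ra \to \P_i^{(\l')}\Q_i^{(\mu')}\la 2\ra,$$
and I would show, via the same adjunction and Lemma~\ref{lem:1} analysis used for the vanishing, that these composites span the degree--$2$ Hom space. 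The number $N_\l$ of admissible intermediate partitions $\l''$ is exactly $2$ when $\l\setminus\l'$ consists of two boxes in different rows and columns (the two corner boxes can be removed in either order) and exactly $1$ when $\l\setminus\l'$ is a horizontal or a vertical domino (only one order is possible); likewise for $\mu$. This produces $N_\l N_\mu\in\{1,2,4\}$ spanning composites, and the content of the lemma is the rank of this spanning set.

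The clean way to see the answer is to track the $S_2$--action coming from the embedding $\k[S_2]\hookrightarrow\End(\P_i^2)$ (and $\End(\Q_i^2)$) that defines the idempotents $e_{(2)}$ and $e_{(1^2)}$. A two--box skew shape in a single row forces the symmetric idempotent ($\P_i^{(2)}$, the trivial $S_2$--representation), one in a single column forces the antisymmetric idempotent ($\P_i^{(1^2)}$, the sign representation), and a generic shape allows both (the regular representation). Writing $\rho_\l,\rho_\mu$ for these $S_2$--representations, I expect to identify the degree--$2$ Hom space with $\Hom_{S_2}(\rho_\l,\rho_\mu)$, where the single $S_2$ permutes the two contracted boxes simultaneously on source and target. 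Its dimension is $2$ in the regular--regular case, $0$ in the trivial--sign and sign--trivial cases, and $1$ in all the remaining cases, matching the three cases of the statement exactly.

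The main obstacle is the sign bookkeeping in this last identification: proving that the unique composite vanishes when $\l\setminus\l'$ is horizontal and $\mu\setminus\mu'$ is vertical (and the mirror case), and that the four composites in the generic--generic case have rank exactly $2$. Both facts reduce to understanding how the symmetric--group idempotents $e_\l$ interact with the cup and cap $2$--morphisms of $\H$ --- concretely, that transposing the two contracted boxes acts on the one--dimensional single--box spaces of Lemma~\ref{lem:2} through the trivial versus the sign character, with the superalgebra signs of Section~\ref{sec:zigzag} being exactly what distinguishes rows from columns. Establishing this equivariance is where the real work lies; the spanning statement and the enumeration of intermediate partitions are comparatively routine.
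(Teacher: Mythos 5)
Your reduction steps are sound, and your closed-form answer is correct: the vanishing unless $\l'\subset\l$ and $\mu'\subset\mu$ follows from degree reasons essentially as in the paper (every degree-one map comes from the counit $\P_i\Q_i\to\1\la 1\ra$, so every degree-two map factors through one-box intermediates, which Lemma \ref{lem:2} controls); the count $N_\l,N_\mu\in\{1,2\}$ of intermediates is right; and $\sum_{\nu\vdash 2}c^{\l}_{\l',\nu}c^{\mu}_{\mu',\nu}=\dim\Hom_{S_2}(\rho_\l,\rho_\mu)$ does reproduce the $2/0/1$ trichotomy of the statement. But there is a genuine gap, and it sits exactly where you flag it: the identification of (\ref{eq:hom}) with $\Hom_{S_2}(\rho_\l,\rho_\mu)$ is not deferred bookkeeping --- it \emph{is} the lemma. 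Your outline produces a spanning set of composites but supplies no mechanism for computing its rank. Concretely, in the horizontal/vertical case you must show that the composite of two maps, each \emph{nonzero} by Lemma \ref{lem:2}, is itself zero; and in the generic--generic case you must show that four composites span a space of dimension exactly $2$, not $3$ or $4$. Neither fact follows from the enumeration, from Lemma \ref{lem:1}, or from Lemma \ref{lem:2}; both are equivalent to the equivariance property ("transposing the two contracted boxes acts by trivial versus sign on the one-dimensional spaces of Lemma \ref{lem:2}") that you only say you \emph{expect} to hold. Asserting it is assuming the conclusion in different notation.

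For comparison, the paper never proves any $S_2$-equivariance and never invokes Littlewood--Richardson multiplicities. After the same vanishing step, it argues by induction on $|\l|+|\mu|$: one chooses $\nu$ with $\P_i\P_i^{(\nu)}\cong\P_i^{(\l)}\oplus_\gamma\P_i^{(\gamma)}$ (with $\nu$ picked so that the $\gamma$-summands are killed by the already-proved vanishing, or so that they are controlled inductively), strips the extra $\P_i$ by adjunction, commutes $\Q_i$ past $\P_i^{(\l')}$ via Lemma \ref{lem:1}, and compares dimension counts with the inductive hypothesis; the rank comes out of pure counting, with no need to analyze how $e_\l$ interacts with cups and caps. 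If you wanted to complete your route instead, you would essentially need that composition with the degree-two caps $\P_i^{(\nu)}\Q_i^{(\nu')}\to\1\la 2\ra$ (nonzero iff $\nu=\nu'$) is injective on the relevant multiplicity spaces --- and note that the lemma is a statement about an arbitrary 2-representation $\K$, where the 2-functor $\H\to\K$ need not be faithful, so facts about endomorphism algebras of $\P_i^n$ inside $\H$ do not transfer for free. Carrying that out would be work comparable to, and structured much like, the induction the paper actually does.
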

\begin{proof}
First one notes that a map in (\ref{eq:hom}) consists of a composition of two degree $1$ maps.  Since the only degree $1$ maps involving only vertex $i$ are induced from the adjunction map $\P_i\Q_i \rightarrow \id\la 1 \ra$, if follows that any map in (\ref{eq:hom}) factors 
$$
	\P_i^{(\l)} \Q_i^{(\mu)} \rightarrow \P_i^{|\l|-1}\Q_i^{|\mu|-1} \la 1 \ra \rightarrow \P_i^{(\l')} \Q_i^{(\mu')} \la 2 \ra.
$$
Since $\P_i^{|\l|-1}\Q_i^{|\mu|-1} \la 1 \ra$ is isomorphic to  a direct sum of indecomposable terms $\P_i^{(\l'')} \Q_i^{(\mu'')} \la 1 \ra$ for partitions $\l''$, $\mu''$, we see that  the space of maps in (\ref{eq:hom}) is spanned by maps which factor through some $\P_i^{(\l'')} \Q_i^{(\mu'')} \la 1 \ra$.  Subsequently, by Lemma \ref{lem:2}, it follows that (\ref{eq:hom}) is zero unless $\l' \subset \l$ and $\mu' \subset \mu$. 

Now, we proceed by induction on $|\l|+|\mu|$. We prove the first case above, the others follow similarly. 

First, suppose that there exists some $\l' \not\subset \nu \subset \l$. Then $\P_i \P_i^{(\nu)} \cong \P_i^{(\l)} \oplus_{\gamma} \P_i^{(\gamma)}$. Subsequently 
$$\Hom(\P_i^{(\l)} \Q_i^{(\mu)}, \P_i^{(\l')} \Q_i^{(\mu')} \la 2 \ra) \cong \Hom(\P_i \P_i^{(\nu)} \Q_i^{(\mu)}, \P_i^{(\l')} \Q_i^{(\mu')} \la 2 \ra)$$
since, for all $\gamma$ in the sum above, $\Hom(\P_i^{(\gamma)} \Q_i^{(\mu)}, \P_i^{(\l')} \Q_i^{(\mu')} \la 2 \ra) = 0$ because $\l' \not\subset \gamma$. Hence
\begin{eqnarray*}
(\ref{eq:hom}) 
&\cong& \Hom(\P_i^{(\nu)} \Q_i^{(\mu)}, \Q_i \P_i^{(\l')} \Q_i^{(\mu')} \la 1 \ra) \\
&\cong& \Hom(\P_i^{(\nu)} \Q_i^{(\mu)}, \P_i^{(\l')} \Q_i \Q_i^{(\mu')} \la 1 \ra) \oplus_{\rho \subset \l'} \Hom(\P_i^{(\nu)} \Q_i^{(\mu)}, \P_i^{(\rho)} \Q_i^{(\mu')} \la 0,2 \ra). 
\end{eqnarray*}
The left term above is zero since $\l' \not\subset \nu$. Using induction, every term in the summation on the right is also zero with the exception of the $\rho$ which satisfies $\rho \subset \nu$. The result now follows by induction. 

Now, suppose that no such $\nu$ as above exists. Thus means that $\l$ is a partition whose Young diagram is a union of at most two rectangles, as illustrated below:
$$
\begin{tikzpicture}[>=stealth]
\draw (-1,-.5) node {$\lambda =$};

\draw (0,0) -- (2,0)[very thick];
\draw (2,0) -- (2,-.5)[very thick];
\draw (0,0) -- (0,-1)[very thick];
\draw (0,-1) -- (1,-1)[very thick];
\draw (1,-.5) -- (1,-1)[very thick];
\draw (1,-.5) -- (2,-.5)[very thick];
\end{tikzpicture}
$$
Now, we choose $\nu \subset \l$ but this time $\l' \subset \nu$ so that once again we have $\P_i \P_i^{(\nu)} \cong \P_i^{(\l)} \oplus_{\gamma} \P_i^{(\gamma)}$. On the one hand we have 
\begin{eqnarray*}
\Hom(\P_i \P_i^{(\nu)} \Q_i^{(\mu)}, \P_i^{(\l')} \Q_i^{(\mu')} \la 2 \ra) 
&\cong& (\ref{eq:hom}) \oplus_\gamma \Hom(\P_i^{(\gamma)} \Q_i^{(\mu)}, \P_i^{(\l')} \Q_i^{(\mu')} \la 2 \ra) \\
&\cong& (\ref{eq:hom}) \oplus \k^{2 \# \{\gamma\}}
\end{eqnarray*}
where the second line follows by applying the first step above and then induction. On the other hand, by adjunction
\begin{eqnarray*}
& & \Hom(\P_i \P_i^{(\nu)} \Q_i^{(\mu)}, \P_i^{(\l')} \Q_i^{(\mu')} \la 2 \ra) \\
&\cong& \Hom(\P_i^{(\nu)} \Q_i^{(\mu)}, \Q_i \P_i^{(\l')} \Q_i^{(\mu')} \la 1 \ra) \\
&\cong& \Hom(\P_i^{(\nu)} \Q_i^{(\mu)}, \P_i^{(\l')} \Q_i \Q_i^{(\mu')} \la 1 \ra) \oplus_{\rho \subset \l'} \Hom(\P_i^{(\nu)} \Q_i^{(\mu)}, \P_i^{(\rho)} \Q_i^{(\mu')} \la 0,2 \ra) \\
&\cong& (\k \oplus \k) \oplus (\k^{2(\# \{\gamma\} - 1)} \oplus \k \oplus \k) 
\end{eqnarray*}
where the last line requires a quick case by case analysis of the possible $\rho$'s. Comparing these two expressions gives us that $(\ref{eq:hom}) \cong \k^2$ and the induction is complete. 
\end{proof}

\begin{cor}\label{cor:3}
Suppose $\nu \subset \mu \subset \l$ are partitions with $|\l| = |\mu|+1 = |\nu|+2$ such that $\l \setminus \nu$ consists of two boxes which are not in the same row or column. Then the composition
\begin{equation}\label{eq:cor3}
\P_i^{(\l)} \Q_i^{(\l^t)} \rightarrow \P_i^{(\mu)} \Q_i^{(\mu^t)} \la 1 \ra \rightarrow \P_i^{(\nu)} \Q_i^{(\nu^t)} \la 2 \ra
\end{equation}
consisting of maps from Lemma \ref{lem:2} is nonzero. 
\end{cor}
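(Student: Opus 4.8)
The plan is to first pin down the target Hom space and then show that the specified composite is a nonzero element of it. Since $\l \setminus \nu$ consists of two boxes lying in distinct rows and columns, the same is true of $\l^t \setminus \nu^t$, because transposition preserves this property. Hence Lemma \ref{lem:3} applies in its first case and
$$\Hom(\P_i^{(\l)} \Q_i^{(\l^t)}, \P_i^{(\nu)} \Q_i^{(\nu^t)} \la 2 \ra) \cong \k^2.$$
It therefore suffices to prove that the composite (\ref{eq:cor3}), which a priori lies in this $2$-dimensional space, does not vanish.

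Next I would unwind the two arrows using the explicit description from Lemma \ref{lem:2}. Each is the composite of an inclusion of $\P_i^{(\l)}$ (resp.\ $\P_i^{(\mu)}$) as a direct summand of $\P_i^{(\mu)}\P_i$ (resp.\ $\P_i^{(\nu)}\P_i$) and of $\Q_i^{(\l^t)}$ (resp.\ $\Q_i^{(\mu^t)}$) as a summand of $\Q_i\Q_i^{(\mu^t)}$ (resp.\ $\Q_i\Q_i^{(\nu^t)}$), followed by the adjunction counit $\P_i\Q_i \to \id\la 1\ra$. Thus (\ref{eq:cor3}) is an explicit alternating chain of summand-inclusions and two counit applications, and the content of the claim is precisely that these do not conspire to cancel.

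To establish non-vanishing I would induct on $|\l|$, mirroring the peeling argument in the proof of Lemma \ref{lem:3}. Choosing a suitable $\rho$ with $\nu \subset \rho \subset \l$ and $|\rho| = |\l|-1$, one has $\P_i\P_i^{(\rho)} \cong \P_i^{(\l)} \oplus_\gamma \P_i^{(\gamma)}$, and adjunction together with Lemma \ref{lem:1} rewrites $\Hom(\P_i\P_i^{(\rho)}\Q_i^{(\l^t)}, \P_i^{(\nu)}\Q_i^{(\nu^t)}\la 2\ra)$ as a sum of Hom spaces between smaller objects of the same shape (the auxiliary $\P_i^{(\gamma)}$ summands contribute nothing, exactly as in Lemma \ref{lem:3}, since $\nu \not\subset \gamma$). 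Tracking the image of (\ref{eq:cor3}) under these identifications expresses it in terms of an analogous composite for partitions with one fewer box, to which the inductive hypothesis applies. The base case is $\nu = (1)$, $\l = (2,1)$ (the smallest $\l \setminus \nu$ consisting of two boxes off a single row and column), $\mu \in \{(2),(1,1)\}$, where the composite $\P_i^{(2,1)}\Q_i^{(2,1)} \to \P_i^{(\mu)}\Q_i^{(\mu^t)}\la 1\ra \to \P_i^{(1)}\Q_i^{(1)}\la 2\ra$ is checked to be nonzero by a direct computation in the $2$-representation $\K_{Fock}$, where the $1$-morphisms are honest $B^D_\ep(n)$-bimodules and the counit is an explicit bimodule map.

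The main obstacle is the bookkeeping in the inductive step: after applying adjunction and invoking Lemma \ref{lem:1}, one must verify that the Lemma \ref{lem:2} maps defining (\ref{eq:cor3}) project onto precisely the summand indexed by the smaller partitions, and not onto one of the auxiliary $\la -1,1\ra$-shifted summands on which they would vanish. Controlling which summand survives — equivalently, ruling out accidental cancellation between the two counit applications — is the crux. The $2$-dimensionality coming from Lemma \ref{lem:3} guarantees there is enough room for the composite to be nonzero, but it does not by itself force this, so the projection analysis is unavoidable.
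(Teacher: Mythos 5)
Your opening reduction is sound and matches the paper: transposition preserves the property of two boxes lying in distinct rows and columns, so Lemma \ref{lem:3} gives $\dim_\k \Hom(\P_i^{(\l)}\Q_i^{(\l^t)}, \P_i^{(\nu)}\Q_i^{(\nu^t)}\la 2\ra) = 2$, and it suffices to show the composite is a nonzero vector in this space. But the inductive scheme you propose for that has a genuine gap, and its key claim is in fact false. If you peel a box via $\rho$ with $\nu \subset \rho \subset \l$ and write $\P_i\P_i^{(\rho)} \cong \P_i^{(\l)}\oplus_\gamma \P_i^{(\gamma)}$, then every auxiliary summand satisfies $\gamma \supset \rho \supset \nu$, so your parenthetical justification that they "contribute nothing since $\nu \not\subset \gamma$" cannot hold: by Lemma \ref{lem:3} the spaces $\Hom(\P_i^{(\gamma)}\Q_i^{(\l^t)}, \P_i^{(\nu)}\Q_i^{(\nu^t)}\la 2\ra)$ are generically nonzero. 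The analogous peeling in the proof of Lemma \ref{lem:3} works only because there one is free to choose the peeled partition so that the target partition is \emph{not} contained in the extra summands; the containment $\nu \subset \rho$, which you need in order to track the composite, forces the opposite. So the "projection analysis'' you flag as the crux is not deferred bookkeeping --- without it the induction never gets started, and your proposal does not supply it. A separate problem is the base case: the corollary is a statement about an arbitrary integrable 2-representation $\K$ (it is applied as such in Proposition \ref{prop:Tcpx}), and a direct computation in $\K_{Fock}$ only proves nonvanishing there; a nonzero 2-morphism of $\H$ can die under the 2-functor $\H \to \K$, so nonvanishing does not transfer from $\K_{Fock}$ to a general $\K$.

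The paper's proof avoids induction entirely and runs inside an arbitrary $\K$. Any map $\P_i^{(\l)}\Q_i^{(\l^t)}\to\P_i^{(\nu)}\Q_i^{(\nu^t)}\la 2\ra$ factors through $\P_i^{(\nu)}\P_i\P_i\Q_i\Q_i\Q_i^{(\nu^t)}$, and since $\Hom(\P_i^{(2)}\Q_i^{(1^2)},\1\la 2\ra)=0=\Hom(\P_i^{(1^2)}\Q_i^{(2)},\1\la 2\ra)$ it in fact factors through $\P_i^{(\nu)}\P_i^{(2)}\Q_i^{(2)}\Q_i^{(\nu^t)}\oplus\P_i^{(\nu)}\P_i^{(1^2)}\Q_i^{(1^2)}\Q_i^{(\nu^t)}$. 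Each space $\Hom\bigl(\P_i^{(\l)}\Q_i^{(\l^t)},\P_i^{(\nu)}\P_i^{(\delta)}\Q_i^{(\delta^t)}\Q_i^{(\nu^t)}\bigr)$ with $\delta\in\{(2),(1^2)\}$ is one-dimensional, so the full 2-dimensional Hom space is spanned by the two composites through these summands; both must therefore be nonzero. Finally, the unique map $\P_i^{(\l)}\to\P_i^{(\nu)}\P_i^{(2)}$ factors through $\P_i^{(\mu)}\P_i$ (and dually on the $\Q$ side), which identifies the composite (\ref{eq:cor3}) up to scalar with the composite through the $(2)$-summand, hence it is nonzero. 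This dimension-count-and-spanning argument is precisely what substitutes for the cancellation analysis your plan leaves open, and because it uses only Lemmas \ref{lem:1}--\ref{lem:3} and the 2-representation axioms, it is valid in every $\K$, not just $\K_{Fock}$.
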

\begin{proof}
Fix $\l,\nu$ as above. Any map $\P_i^{(\l)} \Q_i^{(\l^t)} \rightarrow \P_i^{(\nu)} \Q_i^{(\nu^t)} \la 2 \ra$ must factor as 
$$\P_i^{(\l)} \Q_i^{(\l^t)} \rightarrow \P_i^{(\nu)} \P_i \P_i \Q_i \Q_i \Q_i^{(\nu^t)} \rightarrow \P_i^{(\nu)} \Q_i^{(\nu^t)} \la 2 \ra$$
where the right most map is given by two adjunctions. Now, since
$$\Hom(\P_i^{(2)} \Q_i^{(1^2)},  \1 \la 2 \ra) = 0 = \Hom(\P_i^{(1^2)} \Q_i^{(2)},  \1 \la 2 \ra)$$
this map factors as
$$\P_i^{(\l)} \Q_i^{(\l^t)} \rightarrow \P_i^{(\nu)} \P_i^{(2)} \Q_i^{(2)} \Q_i^{(\nu^t)} \oplus \P_i^{(\nu)} \P_i^{(1^2)} \Q_i^{(1^2)} \Q_i^{(\nu^t)} \rightarrow \P_i^{(\nu)} \Q_i^{(\nu^t)} \la 2 \ra.$$ 
Now, $\Hom(\P_i^{(\l)} \Q_i^{(\l^t)}, \P_i^{(\nu)} \P_i^{(2)} \Q_i^{(2)} \Q_i^{(\nu^t)}) \cong \k \cong \Hom(\P_i^{(\l)} \Q_i^{(\l^t)}, \P_i^{(\nu)} \P_i^{(1^2)} \Q_i^{(1^2)} \Q_i^{(\nu^t)})$ and, by Lemma \ref{lem:3}, $\dim_\k \Hom(\P_i^{(\l)} \Q_i^{(\l^t)}, \P_i^{(\nu)} \Q_i^{(\nu^t)} \la 2 \ra) = 2$. Hence the compositions 
\begin{eqnarray*}
&& \P_i^{(\l)} \Q_i^{(\l^t)} \rightarrow \P_i^{(\nu)} \P_i^{(2)} \Q_i^{(2)} \Q_i^{(\nu^t)} \rightarrow \P_i^{(\nu)} \Q_i^{(\nu^t)} \la 2 \ra \\
&& \P_i^{(\l)} \Q_i^{(\l^t)} \rightarrow \P_i^{(\nu)} \P_i^{(1^2)} \Q_i^{(1^2)} \Q_i^{(\nu^t)} \rightarrow \P_i^{(\nu)} \Q_i^{(\nu^t)} \la 2 \ra
\end{eqnarray*}
are both nonzero. Fortunately, the unique map $\P_i^{(\l)} \rightarrow \P_i^{(\nu)} \P_i^{(2)}$ factors through $\P_i^{(\mu)} \P_i$ and likewise $\Q_i^{(\l^t)} \rightarrow \Q_i^{(2)} \Q_i^{(\nu^t)}$ factors through $\Q_i \Q_i^{(\mu)}$ for any $\nu \subset \mu \subset \l$. Hence 
$$\P_i^{(\l)} \Q_i^{(\l^t)} \rightarrow \P_i^{(\mu)} \P_i \Q_i \Q_i^{(\mu^t)} \rightarrow \P_i^{(\nu)} \P_i \P_i \Q_i \Q_i \Q_i^{(\nu^t)} \rightarrow \P_i^{(\nu)} \Q_i^{(\nu^t)} \la 2 \ra$$
is nonzero and the result follows since this composition is the same as the one in (\ref{eq:cor3}). 
\end{proof}

\subsection{The braid complex $\T_i \1_n$}\label{sec:cpx}

Suppose $\K$ is some integrable 2-representation of $\h$. The main object of study in this paper is the following complex of 1-morphisms 
\begin{equation}\label{eq:cpx}
\T_i \1_n := \left[ \dots \rightarrow \bigoplus_{\l \vdash d} \P_i^{(\l)} \Q_i^{(\l^t)} \la -d \ra \1_n \rightarrow \bigoplus_{\l \vdash d-1} \P_i^{(\l)} \Q_i^{(\l^t)} \la -d+1 \ra \1_n \rightarrow \dots \rightarrow \P_i \Q_i \la -1 \ra \1_n\rightarrow \1_n \right].
\end{equation}
which lives naturally in $\Kom(\K)$. The right hand term $\1_n$ of this complex is in cohomological degree zero. Notice that since $\K$ is integrable this complex is finite. 

The differential in (\ref{eq:cpx}) is defined as the composition 
\begin{equation}\label{eq:diff}
\P_i^{(\l)} \Q_i^{(\l^t)} \rightarrow \P_i^{(\mu)} \P_i \Q_i \Q_i^{(\mu^t)} \rightarrow \P_i^{(\mu)} \Q_i^{(\mu^t)} \la 1 \ra
\end{equation}
where the first map is inclusion and the second is given by adjunction (note that we must have $\mu \subset  \l$ in order for this map to be nonzero). The inclusion map is unique but only up to multiple. Likewise, by Lemma \ref{lem:2}, the composition is unique but only up to multiple. Fortunately, Proposition \ref{prop:Tcpx} and Remark \ref{rem:1} below shows that there is a unique way (up to homotopy) to choose these multiples in order to get an indecomposable complex. 

\begin{Remark} 
Note that we did not check directly that the compositions in (\ref{eq:diff}) define a differential ({\it i.e.} square to zero). It is possible to check this directly by generalizing the statement in Lemma \ref{lem:3} but we avoid doing this extra work because later we will conclude for free that there exists an indecomposable complex with terms as in (\ref{eq:cpx}). Then Proposition \ref{prop:Tcpx} will tell us that the differentials are indeed given by (\ref{eq:diff}). 
\end{Remark}

\begin{prop}\label{prop:Tcpx}
Any complex whose terms are the same as those of $\T_i$ and which is indecomposable in $\Kom(\K)$ is homotopic to $\T_i$. 
\end{prop}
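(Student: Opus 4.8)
The plan is to establish the homotopy-uniqueness of $\T_i$ by pinning down, degree by degree, the differential of an arbitrary indecomposable complex $C_\bullet$ with the same terms as $\T_i$, and showing it must agree (up to a change of basis/homotopy) with the one given by (\ref{eq:diff}). The key input is the Hom-space computations of Lemmas \ref{lem:2} and \ref{lem:3}: the point is that the space of candidate differentials between consecutive terms is very tightly constrained, so indecomposability forces essentially a unique choice.

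First I would analyze the possible components of the differential. A map $\bigoplus_{\l \vdash d} \P_i^{(\l)} \Q_i^{(\l^t)} \la -d \ra \to \bigoplus_{\mu \vdash d-1} \P_i^{(\mu)} \Q_i^{(\mu^t)} \la -d+1 \ra$ is a matrix of 2-morphisms $\P_i^{(\l)} \Q_i^{(\l^t)} \to \P_i^{(\mu)} \Q_i^{(\mu^t)} \la 1 \ra$. By Lemma \ref{lem:2}, each such component space is at most one-dimensional, and is nonzero precisely when $\mu \subset \l$ with $|\l| = |\mu|+1$ (so $\mu^t \subset \l^t$ as well, since transposing a single-box removal removes a single box from the transpose). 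Thus the differential $d_\l^\mu$ is a scalar multiple of the canonical composition in Lemma \ref{lem:2}, and the entire differential is recorded by a collection of scalars $\{c_{\l,\mu}\}$ indexed by pairs $\mu \subset \l$ differing by one box.

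Next I would use the condition $d^2 = 0$ together with Lemma \ref{lem:3} and Corollary \ref{cor:3} to constrain these scalars. For a fixed pair $\nu \subset \l$ with $|\l| = |\nu|+2$, the composite $d^2$ contributes to $\Hom(\P_i^{(\l)} \Q_i^{(\l^t)}, \P_i^{(\nu)} \Q_i^{(\nu^t)} \la 2 \ra)$, whose dimension is computed by Lemma \ref{lem:3}. When $\l \setminus \nu$ consists of two boxes in distinct rows and columns, there are exactly two intermediate $\mu$ (with $\nu \subset \mu \subset \l$), and Corollary \ref{cor:3} guarantees each individual composite is nonzero; the vanishing $c_{\l,\mu_1} c_{\mu_1,\nu} + c_{\l,\mu_2} c_{\mu_2,\nu} = 0$ then relates the scalars. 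In the remaining cases of Lemma \ref{lem:3} the relevant Hom-space is at most one-dimensional (or the composite automatically vanishes), again forcing linear relations. I would solve this system and argue that, after rescaling the terms $\P_i^{(\l)} \Q_i^{(\l^t)}$ by invertible scalars (an isomorphism of complexes), all the $c_{\l,\mu}$ can be normalized to the values appearing in $\T_i$ — with indecomposability ruling out the degenerate solutions (e.g. some $c_{\l,\mu}=0$ producing a direct-sum splitting).

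The main obstacle I anticipate is the normalization/rigidity step: showing that the cocycle $\{c_{\l,\mu}\}$ satisfying the quadratic relations is, up to the coboundary freedom coming from rescaling terms, \emph{unique} and \emph{nonzero on every edge}. This is a question about the rigidity of the ``Young-lattice complex'' of scalar systems, and one must check that any indecomposable solution is equivalent to the standard one, while decomposable complexes arise exactly when some edge scalar can be forced to zero. I would phrase this as a connectivity argument on the poset of partitions: because the Hasse diagram of the dominance/inclusion order restricted to a fixed $\l$ and its descendants is connected and the two-box relations tie neighboring edges together, the scalars are determined up to the global rescaling, and nonvanishing of each is needed to avoid splitting off a summand.
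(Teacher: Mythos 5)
Your proposal follows the same architecture as the paper's proof: reduce the differential to scalars $c_{\l,\mu}$ attached to one-box inclusions via Lemma \ref{lem:2}, extract quadratic relations from $d^2=0$ using Lemma \ref{lem:3} and Corollary \ref{cor:3}, argue that indecomposability forces every scalar to be nonzero, and then normalize by rescaling the terms. The rescaling step is carried out in the paper exactly as you describe (induct on $|\l|$, rescale $\P_i^{(\l)}\Q_i^{(\l^t)}$ so that one chosen scalar agrees, and let the square relations force agreement of the remaining ones), so that part of your plan is sound.

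The genuine gap is at the step you yourself flag as the main obstacle: proving that indecomposability implies $c_{\l,\mu}\ne 0$ for every edge. Your justification --- ``some $c_{\l,\mu}=0$ producing a direct-sum splitting'' and ``nonvanishing of each is needed to avoid splitting off a summand'' --- asserts the needed implication rather than proving it, and the implicit mechanism is false as stated: a vanishing component of the differential does not by itself decompose a complex, because the term $\P_i^{(\l)}\Q_i^{(\l^t)}$ can still be attached to the rest of the complex through differentials coming \emph{into} it. (Already in $\T_i\1_n$ itself the final term $\1_n$ has no outgoing differentials, yet the complex is indecomposable.) The paper therefore argues in two stages. First, a propagation step: if $c_{\l,\mu}=0$ for one $\mu$, then the skew-commutative square (\ref{eq:commsquare}), the nonvanishing of the composites (Corollary \ref{cor:3}), and the inductively known nonvanishing of differentials out of smaller partitions force \emph{all} differentials out of $\P_i^{(\l)}\Q_i^{(\l^t)}$ to vanish. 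Second --- and this is what is absent from your sketch --- one must exclude the possibility that this term is glued in only through incoming differentials: the paper takes a zig-zag path of nonzero differentials, minimal with respect to the area it bounds, connecting $\P_i^{(\l)}\Q_i^{(\l^t)}$ to a term with a nonzero outgoing differential, and uses skew-commutativity of the squares along the path to propagate vanishing and contradict minimality. Your poset-connectivity remark does not engage with this zig-zag phenomenon, and without it the proof is incomplete. A smaller issue: writing the relation as $c_{\l,\mu_1}c_{\mu_1,\nu}+c_{\l,\mu_2}c_{\mu_2,\nu}=0$ tacitly assumes the two nonzero composites are proportional inside the two-dimensional Hom space of Lemma \ref{lem:3}; if they were linearly independent, $d^2=0$ would force each product to vanish separately. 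The paper's arguments are arranged to work in either case, with proportionality only emerging a posteriori once an indecomposable complex with all scalars nonzero is known to exist.
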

\begin{proof}
Let us fix $n \in \Z$ and look at $\T_i \1_n$. For any $\l,\mu$ such that $\Hom(\P_i^{(\l)} \Q_i^{(\l^t)} \1_n, \P_i^{(\mu)} \Q_i^{(\mu^t)} \1_n \la 1 \ra) = 1$  fix a spanning map $f_{\l,\mu}$. A complex as in (\ref{eq:cpx}) will have differentials of the form $a_{\l,\mu} f_{\l,\mu}$ for some scalars $a_{\l,\mu} \in \k$. We need to show that 
\begin{enumerate}
\item if the complex in (\ref{eq:cpx}) is indecomposable then $a_{\l,\mu} \ne 0$ 
\item if $\{ a_{\l,\mu} \}$ and $\{ a'_{\l,\mu} \}$ are two choices of scalars then they are equivalent via some homotopy.
\end{enumerate} 
{\bf Proof of (1).} We proceed by (decreasing) induction on the cohomological degree. The base case is the map $a_{1,1} f_{1,1}: \P_i \Q_i \1_n \la -1 \ra \rightarrow \1_n$. Clearly $a_{1,1} \ne 0$ because otherwise $\T_i \1_n$ would decompose. Now suppose $a_{\beta,\alpha} \ne 0$ for all $\beta$ with $|\beta| > \ell$ (for some $\ell$).

{\bf Claim.} If $a_{\l,\mu} = 0$ for some $\l$ with $|\l| = \ell$ then all differentials out of $\P_i^{(\l)} \Q_i^{(\l^t)} \1_n$ are zero. 

To see this consider another differential $a_{\l,\mu'} f_{\l,\mu'}$. These two maps are part of a unique skew-commutative square 
\begin{equation}\label{eq:commsquare}
\xymatrix{
& \P_i^{(\mu)} \Q_i^{(\mu^t)} \1_{n} \la 1 \ra \ar[dr]^{a_{\mu,\nu} f_{\mu,\nu}} & \\
\P_i^{(\l)} \Q_i^{(\l^t)} \1_n \ar[ru]^{a_{\l,\mu} f_{\l,\mu} = 0} \ar[rd]^{a_{\l,\mu'} f_{\l,\mu'}} &  & \P_i^{(\nu)} \Q_i^{(\nu^t)} \1_n  \la 2 \ra \\
& \P_i^{(\mu')} \Q_i^{({\mu'}^t)} \1_n \la 1 \ra \ar[ru]^{a_{\mu',\nu} f_{\mu',\nu}} & 
}
\end{equation}
By induction the right two maps are nonzero, which means that $a_{\mu',\nu}=0$ since $f_{\l',\nu} f_{\l,\mu'} \ne 0$ by Lemma \ref{lem:3}. This proves our claim. 

Now, since $\T_i$ is indecomposable and all maps out of $\P_i^{(\l)} \Q_i^{(\l^t)} \1_n$ are zero there must be a sequence of nonzero differentials which connect $\P_i^{(\l)} \Q_i^{(\l^t)} \1_n$ to some other $\P_i^{(\mu)} \Q_i^{({\mu}^t)} \1_n$. Such a path is depicted by the solid arrows in the figure below. 

$$
\begin{tikzpicture}[>=stealth]
\draw (0,0) -- (1,.5)[->] [very thick];
\draw (1,.5) -- (2,1)[->] [very thick];
\draw (2,1) -- (3,1.5)[->] [very thick];
\draw (3.75,1.65) node {$\P_i^{(\l)}\Q_i^{(\l^t)}$};
\draw (4.5,1.5) -- (5.5,2)[->] [very thick];
\draw (4.5,1.5) -- (5.5,1.5)[->] [very thick];
\draw (4.5,1.5) -- (5.5,1)[->] [very thick];

\draw (5,2) node {$0$};
\draw (5.25,1.7) node {$0$};
\draw (5,1) node {$0$};

\draw (0,0) -- (.9,-.45)[->] [very thick];

\draw (0,-1) -- (.9,-.55)[->] [very thick];
\draw (-1,-1.5) -- (0,-1)[->] [very thick];

\draw (-1,-1.5) -- (0,-2)[->] [very thick];
\draw (0,-2) -- (1,-2.5)[->] [very thick];
\draw (1,-2.5) -- (2,-3)[->] [very thick];
\draw (2,-3) -- (3,-2.5)[->] [very thick];
\draw (3.85,-2.6) node {$\P_i^{(\mu)}\Q_i^{(\mu^t)}$};

\draw (4.5,-3) -- (5.5,-2.5)[->] [very thick];
\draw (4.5,-3) -- (5.5,-3)[->] [very thick];
\draw (4.5,-3) -- (5.5,-3.5)[->] [very thick];

\draw (5,-2.5) node {$\neq0$};

\draw (0,-1) -- (.9,-1.45)[->] [dashed,very thick];
\draw (1,-1.5) -- (1.9,-1.95)[->] [dashed,very thick];
\draw (2.1,-2.05) -- (2.9,-2.5)[->] [dashed,very thick];

\draw (0,-2) -- (.9,-1.55)[->] [dashed,very thick];
\draw (1,-2.5) -- (1.9,-2.05)[->] [dashed,very thick];

\draw (.4,-1.6) node {$g_1$};
\draw (1.4,-2.1) node {$g_2$};
\draw (2.4,-2.5) node {$g_k$};

\draw (-.5,-1) node {$f_0$};
\draw (-.5,-2) node {$g_0$};

\draw (.55,-1) node {$f_1$};
\draw (1.7,-1.5) node {$f_2\hdots$};
\draw (2.55,-2) node {$f_k$};
\end{tikzpicture}
$$
By assumption, in the path connecting $\P_i^{(\l)} \Q_i^{(\l^t)} \1_n$ and $\P_i^{(\mu)} \Q_i^{(\mu^t)} \1_n$, the outward differentials (and in particular the map $g_k$ at the end) are assumed to be nonzero.  Without loss of generality we may assume that the path is minimal with respect to the area to its right. Now consider the square formed by $f_0,g_0,f_1,g_1$ (note that $f_1,g_1$ are uniquely determined by $f_0,g_0$). The skew-commutativity of this square means that $f_1$ and $g_1$ are either both zero or both nonzero. If they were both nonzero then one would obtain a smaller path which uses $f_1,g_1$ in place of $f_0,g_0$. So we must have $f_1=0=g_1$. 

Now, the skew-commutativity of the subsequent squares means that $g_1=0 \Rightarrow g_2=0 \Rightarrow \dots \Rightarrow g_k = 0$ which is a contradiction. Thus $a_{\l,\mu} \ne 0$ and we are done.

{\bf Proof of (2).} By (1) we know that $a_{1,1} \ne 0 \ne a'_{1,1}$. So we can rescale $\P_i \Q_i \1_n$ (the second term from the right in $\T_i \1_n$) so that $a_{1,1} = a'_{1,1}$. Now suppose by induction that $a_{\alpha,\beta} = a'_{\alpha,\beta}$ for all $|\beta| > \ell$. For any $\l$ with $|\l| = \ell$ and differential $\P_i^{(\l)} \Q_i^{(\l^t)} \1_n \rightarrow \P_i^{(\mu)} \Q_i^{(\mu^t)} \1_n$ we can rescale $\P_i^{(\l)} \Q_i^{(\l^t)}$ so that $a_{\l,\mu} = a'_{\l,\mu}$. 

{\bf Claim.} For any other differential out of $\P_i^{(\l)} \Q_i^{(\l^t)}$ we have $a_{\l,\mu'} = a'_{\l,\mu'}$. To see this consider again the square in (\ref{eq:commsquare}). By induction we know that $a_{\mu,\nu} = a'_{\mu,\nu}$ and $a_{\mu',\nu} = a'_{\mu',\nu}$. Then by the rescaling above we also have $a_{\l,\mu} = a'_{\l,\mu}$ so the skew-commutativity of the square also gives $a_{\l,\mu'} = a'_{\l,\mu'}$. 

Thus we can continue this way and scale things so that $a_{\l,\mu} = a'_{\l,\mu}$ for all $\mu \subset \l$ (which proves (2)). 
\end{proof}

\begin{Remark}\label{rem:1} 
To define the differentials in (\ref{eq:cpx}) one can choose arbitrary nonzero maps $a_{\l,\mu} f_{\l,\mu}: \P_i^{(\l)} \Q_i^{(\l^t)} \1_n \rightarrow \P_i^{(\mu)} \Q_i^{(\mu^t)} \la 1 \ra \1_n$ and then rescale them as in the proof above to obtain a complex ({\it i.e.} so the square in (\ref{eq:commsquare}) is skew-commutative). 
\end{Remark} 

\section{The braid relations}\label{sec:braidrels}

\subsection{The braid relations in $\K_{Fock}$}\label{sec:braidfock}

We first work with the 2-category $\Kom(K_{Fock})$. To simplify notation we will denote $B := B^D_\ep(1)$ and $B_i := Be_{i,1}$ and ${}_iB := e_{i,1} B$ the natural $(B,\k)$ and $(\k,B)$ bimodules.  Define the complex $\Sigma_i \1_1$ of $(B,B)$-bimodules as
$$\Sigma_i \1_1 := B_i \otimes_\k {}_iB \la -2 \ra \rightarrow B$$
where the map is multiplication (or equivalently given by a cap) and the right hand term $B$ is in cohomological degree zero. Similarly, we define 
$$\Sigma_i^{-1} \1_1 := B \rightarrow B_i \otimes_\k {}_iB \la 2 \ra $$
where the map is given by a cap and the left hand term $B$ is in cohomological degree zero. 

\begin{prop}\label{prop:braids}
The complexes $\Sigma_i \1_1$ satisfy the braid relations of $\Br(D)$ where the inverse of $\Sigma_i \1_1$ is the complex $\Sigma_i^{-1} \1_1$. In other words, in the homotopy category of $(B,B)$-bimodules we have the following homotopy equivalences 
\begin{itemize}
\item $\Sigma_i \Sigma_i^{-1} \1_1 \xrightarrow{\sim} \1_1$ and $\Sigma_i^{-1} \Sigma_i \1_1 \xrightarrow{\sim} \1_1$,
\item $\Sigma_i \Sigma_j \Sigma_i \1_1 \xrightarrow{\sim} \Sigma_j \Sigma_i \Sigma_j \1_1$ if $\la i,j \ra = -1$, 
\item $\Sigma_i \Sigma_j \1_1 \xrightarrow{\sim} \Sigma_j \Sigma_i \1_1$ if $\la i,j \ra = 0$. 
\end{itemize}
\end{prop}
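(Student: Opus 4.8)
The strategy is to reduce each relation to an explicit manipulation of complexes of $(B,B)$-bimodules carried out by Gaussian elimination, the only external input being the graded structure of $B = B^D_\ep(1)$ and of the ``inner composition'' bimodules ${}_iB \otimes_B B_j \cong e_i B e_j$. Recall that $e_i B e_i \cong \k \oplus \k\la 2 \ra$ (the unit together with the length-two loop), that $e_i B e_j$ is one-dimensional and concentrated in a single internal degree when $\la i,j\ra = -1$, and that $e_i B e_j = 0$ when $i \ne j$ and $\la i,j\ra = 0$. In this language $\Sigma_i\1_1 = \Cone(B_i \otimes_\k {}_iB\la -2\ra \to B)$ is precisely the Seidel--Thomas spherical twist attached to the spherical bimodule $B_i$, and $\Sigma_i^{-1}\1_1$ is its dual twist; these are the Khovanov--Seidel twists, so the result is morally \cite{KS, HK}, but I would give the computation directly. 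The whole Proposition is the assertion that this family of twists realizes the Dynkin type of $D$, and the three relations below make this explicit.

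\textbf{Invertibility.} I would form the total complex of $\Sigma_i\1_1 \otimes_B \Sigma_i^{-1}\1_1$; it sits in cohomological degrees $-1,0,1$, with middle term $B \oplus (B_i \otimes_\k (e_i B e_i) \otimes_\k {}_iB)$ and outer terms given by shifts of $B_i \otimes_\k {}_iB$. Substituting $e_iBe_i \cong \k \oplus \k\la 2\ra$ splits the second summand of the middle into two copies of $B_i \otimes_\k {}_iB$; using the unit/counit (cap--cup) relations of the adjunction, each copy maps isomorphically onto one of the two outer terms under the total differential. Two applications of Gaussian elimination then cancel the entire $B_i \otimes_\k {}_iB$-contribution and leave $B$ in degree $0$, so $\Sigma_i\Sigma_i^{-1}\1_1 \simeq \1_1$, and the computation of $\Sigma_i^{-1}\Sigma_i\1_1$ is symmetric. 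The point that must be checked with care is that these cap--cup composites are genuine isomorphisms onto direct summands (equivalently, that the Frobenius pairing on $B$ is nondegenerate), rather than merely nonzero maps.

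\textbf{Far commutation.} When $\la i,j\ra = 0$ the relation is immediate: since $e_i B e_j = 0 = e_j B e_i$, every mixed inner composition ${}_iB \otimes_B B_j$ and ${}_jB \otimes_B B_i$ vanishes, so the total complexes computing $\Sigma_i\Sigma_j\1_1$ and $\Sigma_j\Sigma_i\1_1$ exhibit no interaction between the $i$- and $j$-summands. The evident interchange of tensor factors is then an isomorphism of complexes, not merely a homotopy equivalence.

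\textbf{Braid relation.} This is where the real work lies and is the main obstacle. I would expand $\Sigma_i\Sigma_j\Sigma_i\1_1$ and $\Sigma_j\Sigma_i\Sigma_j\1_1$ as total complexes of triple tensor products; each a priori has eight terms spread over cohomological degrees $-3$ through $0$. Rewriting the mixed inner compositions via $e_iBe_j \cong \k\la\cdot\ra$ and the repeated strand via $e_iBe_i \cong \k \oplus \k\la 2\ra$ produces, in each complex, split summands that cancel against their neighbours by Gaussian elimination. I expect both sides to reduce to the \emph{same} short complex, symmetric under interchange of $i$ and $j$, whence the homotopy equivalence follows by matching the two reductions term by term together with their surviving differentials. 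The genuine difficulty is bookkeeping: one must track the internal grading shifts and the signs in the total-complex differential through each elimination and confirm that the surviving structure maps on the two sides coincide (which ultimately rests on the nondegeneracy of the Frobenius pairing and the nonvanishing of the relevant zig-zag multiplications). Once both triple products are identified with this common reduced complex, the three relations together prove the Proposition.
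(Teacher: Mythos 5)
You should first be aware that the paper does not prove this proposition at all: its proof is the single line ``These relations are proven in \cite{HK}, following earlier work \cite{KS} in type $A$.'' So your proposal is not an alternative to an argument in the paper; it is an attempt to inline the cited one, and indeed your three reductions (cancel $\Sigma_i\Sigma_i^{-1}$ by Gaussian elimination, kill all mixed terms when $\la i,j \ra = 0$ because $e_iBe_j=0$, expand and cancel the triple products) are exactly the computations of \cite{KS} and \cite{HK}. Your treatment of invertibility and of far commutation is correct and essentially complete: the one thing to verify, as you say, is that the cap--cup composites involving the two split copies of $B_i\otimes_\k {}_iB$ inside the middle term are isomorphisms, which is the Frobenius/zig-zag identity for $B$ (note that one copy \emph{receives} an isomorphism from the degree $-1$ term while the other \emph{maps} isomorphically onto the degree $+1$ term, a distinction your phrasing blurs).

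The gap is the braid relation, where your proposal stops at ``I expect both sides to reduce to the same short complex.'' That expected reduction \emph{is} the theorem---it is precisely the content of the cited computations---so as written you have proved the first and third bullets but only stated a plan for the second. Two remarks to close it. First, there is a much lighter route than the eight-term expansion, available as soon as invertibility is established: compute $\Sigma_i\Sigma_j(B_i)\simeq B_j$ up to shift (a short cone computation: $\Sigma_j(B_i)=\Cone(B_j\la\cdot\ra\to B_i)$ since $e_jBe_i\cong\k\la 1\ra$; applying $\Sigma_i$ and cancelling the identity component of $\Sigma_i(B_i)$ leaves $B_j$), and then invoke the conjugation identity $\Sigma_{\Phi(E)}\cong\Phi\,\Sigma_E\,\Phi^{-1}$ for an equivalence $\Phi$ and spherical object $E$ (see \cite{ST}), with $\Phi=\Sigma_i\Sigma_j$ and $E=B_i$; this yields $\Sigma_i\Sigma_j\Sigma_i\cong\Sigma_j\Sigma_i\Sigma_j$ with almost no sign-tracking. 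Second, your ``bookkeeping'' caveat is not idle: if you run your invertibility elimination with the shifts exactly as printed in the Proposition ($\la -2\ra$ in $\Sigma_i$ and $\la 2\ra$ in $\Sigma_i^{-1}$), it does not close up---the middle term splits into copies of $B_i\otimes_\k {}_iB$ whose internal shifts differ by $2$, while the two outer terms carry shifts differing by $4$, and graded Gaussian elimination can only cancel summands with equal shift, so one outer term survives. Consistency forces the two shifts to differ by $2$ (for instance $\Sigma_i=[B_i\otimes_\k {}_iB\to B]$ with multiplication and $\Sigma_i^{-1}=[B\to B_i\otimes_\k {}_iB\la -2\ra]$ with the Frobenius coevaluation, both of degree zero), so the conventions must be normalized before any of your three computations can actually be carried out.
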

\begin{proof}
These relations are proven in \cite{HK}, following earlier work \cite{KS} in type $A$.
\end{proof}

\begin{cor}\label{cor:braids}
In the homotopy category $\Kom(\K_{Fock})$ we have the following homotopy equivalences 
\begin{itemize}
\item $\Sigma_i^{[n]} ({\Sigma_i^{-1}})^{[n]} \1_n \xrightarrow{\sim} \1_n$ and $({\Sigma_i^{-1}})^{[n]} \Sigma_i^{[n]} \1_n \xrightarrow{\sim} \1_n$,
\item $\Sigma_i^{[n]} \Sigma_j^{[n]} \Sigma_i^{[n]} \1_n \xrightarrow{\sim} \Sigma_j^{[n]} \Sigma_i^{[n]} \Sigma_j^{[n]} \1_n$ if $\la i,j \ra = -1$,
\item $\Sigma_i^{[n]} \Sigma_j^{[n]} \1_n \xrightarrow{\sim} \Sigma_j^{[n]} \Sigma_i^{[n]} \1_n$ if $\la i,j \ra = 0$. 
\end{itemize}
\end{cor}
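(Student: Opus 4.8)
The plan is to obtain the three identities by applying the wreath $2$-functor $(\cdot)^{[n]}$ to the homotopy equivalences of Proposition \ref{prop:braids}, which already establish the braid relations at level $1$ in the homotopy category of $(B,B)$-bimodules. The first observation I would record is that $(\cdot)^{[n]}$ sends the level-$1$ data to exactly the objects appearing in the Corollary: since $B = B^D_\ep(1)$ and $B^{[n]} = (B^D_\ep)^{\otimes n} \rtimes \k[S_n] = B^D_\ep(n)$, the identity complex $\1_1$ (the regular bimodule $B$ placed in cohomological degree $0$) satisfies $\1_1^{[n]} = B^D_\ep(n) = \1_n$.

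Next I would identify the wreath image of a composite complex. Because $(\cdot)^{[n]}$ defines a $2$-functor on $\Kom(\catC_a)$ (the proposition obtained by combining Lemma \ref{lem:[n]functor} with Lemma \ref{lem:homotopy}), it respects composition of $1$-morphisms, i.e. tensor products of complexes of bimodules. Hence
$$(\Sigma_i \Sigma_j \Sigma_i \1_1)^{[n]} \cong \Sigma_i^{[n]} \Sigma_j^{[n]} \Sigma_i^{[n]} \1_n, \qquad (\Sigma_j \Sigma_i \Sigma_j \1_1)^{[n]} \cong \Sigma_j^{[n]} \Sigma_i^{[n]} \Sigma_j^{[n]} \1_n,$$
and similarly for the products in the other two relations. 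I want to stress that this is where the subtlety of $(\cdot)^{[n]}$ enters: the tensor product of complexes involves direct sums in each homological degree, and $(\cdot)^{[n]}$ is not additive, so the compatibility is not a formality. It is exactly the content of the $\k[d]/d^2$-comodule description of $M_\bullet^{[n]}$, where tensor product of complexes corresponds to tensoring $\k[d]/d^2$-comodules along the coproduct $\Delta$, and $(\cdot)^{[n]}$ commutes with tensor products by the isomorphism of Lemma \ref{lem:[n]functor}.

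With these identifications in hand, I would transport the homotopy equivalences themselves. A homotopy equivalence $\Sigma_i \Sigma_j \Sigma_i \1_1 \xrightarrow{\sim} \Sigma_j \Sigma_i \Sigma_j \1_1$ is a pair of chain maps $\phi, \psi$ with $\psi \phi \simeq \id$ and $\phi \psi \simeq \id$. Applying $(\cdot)^{[n]}$ produces chain maps $\phi^{[n]}, \psi^{[n]}$ between the composite complexes above. Using that $(\cdot)^{[n]}$ is multiplicative on $2$-morphisms, so that $(\psi \phi)^{[n]} = \psi^{[n]} \phi^{[n]}$ and $(\id)^{[n]} = \id$, while Lemma \ref{lem:homotopy} guarantees that homotopic maps are carried to homotopic maps, I conclude $\psi^{[n]} \phi^{[n]} \simeq \id$ and $\phi^{[n]} \psi^{[n]} \simeq \id$. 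Combined with the isomorphisms of the previous step this yields $\Sigma_i^{[n]} \Sigma_j^{[n]} \Sigma_i^{[n]} \1_n \xrightarrow{\sim} \Sigma_j^{[n]} \Sigma_i^{[n]} \Sigma_j^{[n]} \1_n$. The invertibility relation and the far-commutation relation follow in exactly the same way from the corresponding equivalences in Proposition \ref{prop:braids}.

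The main obstacle is precisely the non-linearity of $(\cdot)^{[n]}$ flagged earlier in the text: one cannot invoke a generic ``a functor preserves homotopy equivalences'' principle, since chain homotopies are built from linear combinations of maps and $(\cdot)^{[n]}$ preserves neither sums nor scalar multiples. The argument therefore rests entirely on the two structural facts established to bypass this, namely that $(\cdot)^{[n]}$ is multiplicative on compositions (Lemma \ref{lem:[n]functor}) and that it carries homotopies to homotopies (Lemma \ref{lem:homotopy}); once these are invoked, what remains is a purely formal transport of the level-$1$ relations to level $n$.
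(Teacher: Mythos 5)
Your proposal is correct and follows essentially the same route as the paper: the paper's proof simply applies the wreath 2-functor $(\cdot)^{[n]}$, which is an endofunctor of $\Kom(\catC_a)$ by the combination of Lemmas \ref{lem:[n]functor} and \ref{lem:homotopy}, to the homotopy equivalences of Proposition \ref{prop:braids}, exactly as you do. Your additional remarks correctly identify the same two structural facts (multiplicativity on compositions and preservation of homotopies) that the paper relies on to circumvent the non-linearity of $(\cdot)^{[n]}$.
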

\begin{proof}
This is an immediate consequence of the naturality of the wreath functor $(\cdot) \mapsto (\cdot)^{[n]}$ as discussed in section \ref{sec:wreath}. For example, the homotopy equivalence $\Sigma_1^{-1} \Sigma_1 \1_1 \xrightarrow{\sim} \1_1$ induces 
$$\Sigma_i^{[n]} (\Sigma_i^{-1})^{[n]} \1_n \cong (\Sigma_i \Sigma_i^{-1} \1_1)^{[n]} \xrightarrow{\sim} (\1_1)^{[n]} = \1_n.$$
\end{proof}

The next theorem relates $\Sigma_i^{[n]} \1_n$ and the complex $\T_i \1_n$ defined in (\ref{eq:cpx}).  

\begin{Theorem}\label{thm:braids}
The complex $\Sigma_i^{[n]}$ is isomorphic to the complex of $(B^D_\ep(n),B^D_\ep(n))$-bimodules
\begin{equation}\label{eq:cpx2}
\bigoplus_{\l \vdash n} \P_i^{(\l)} \Q_i^{(\l^t)} \la -n \ra \1_n \rightarrow \dots \rightarrow \bigoplus_{\l \vdash d} \P_i^{(\l)} \Q_i^{(\l^t)} \la -d \ra \1_n \rightarrow \dots \rightarrow \P_i \Q_i \la -1 \ra \1_n\rightarrow \1_n
\end{equation}
defined in section \ref{sec:cpx}. 
\end{Theorem}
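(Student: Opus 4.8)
The strategy is to avoid computing the differentials of $\Sigma_i^{[n]}$ altogether, exploiting Proposition~\ref{prop:Tcpx}: it suffices to show that $\Sigma_i^{[n]}$ is indecomposable in $\Kom(\K_{Fock})$ and that its terms coincide with those of $\T_i$, i.e. with the terms of the complex~(\ref{eq:cpx2}). Proposition~\ref{prop:Tcpx} then forces $\Sigma_i^{[n]}$ to be homotopic to $\T_i$, and in particular shows \emph{a posteriori} that its differentials are the maps~(\ref{eq:diff}).

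\emph{Indecomposability.} By Corollary~\ref{cor:braids} the complex $\Sigma_i^{[n]}$ is invertible in $\Kom(\K_{Fock})$, with two-sided inverse $(\Sigma_i^{-1})^{[n]}$. The object $\1_n$ is indecomposable: since it is concentrated in homological degree $0$, its endomorphism ring in $\Kom(\K_{Fock})$ agrees in degree $0$ with $\End_{\K_{Fock}}(\1_n)$ in degree $0$, which is one-dimensional by the defining hypotheses of a $2$-representation, hence local. Now the usual argument applies: if $\Sigma_i^{[n]} \simeq X_1 \oplus X_2$, then $\1_n \simeq X_1(\Sigma_i^{-1})^{[n]} \oplus X_2(\Sigma_i^{-1})^{[n]}$, so indecomposability of $\1_n$ forces (say) $X_2(\Sigma_i^{-1})^{[n]} \simeq 0$, whence $X_2 \simeq X_2(\Sigma_i^{-1})^{[n]}\Sigma_i^{[n]} \simeq 0$. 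Thus $\Sigma_i^{[n]}$ is indecomposable.

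\emph{The terms.} This is the substantive step, and it is here that the non-additivity of $(\cdot)^{[n]}$ (noted in section~\ref{sec:wreath}) is essential: the term of $\Sigma_i^{[n]}$ in a fixed cohomological degree is \emph{not} a naive direct sum but a super-symmetrization. Write $\Sigma_i\1_1 = [\,M\la -2\ra \to \1_1\,]$ with $M := B_i\otimes_\k {}_iB$ in (odd) cohomological degree $-1$; recall that $M$ realizes $\P_i\Q_i\1_1$ up to a grading shift. By the description of the wreath differential in section~\ref{sec:wreath}, the cohomological degree $-d$ term of $\Sigma_i^{[n]}$ is obtained by choosing $d$ of the $n$ tensor slots to be $M$ and the remaining $n-d$ to be $\1_1$, and then applying $\rtimes\,\k[S_n]$ through superpermutations. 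Because $M$ is odd, a transposition of two $M$-slots picks up the extra sign from $\deg = |\cdot| + |\cdot|_h$; consequently the $S_d$ permuting the $M$-factors acts through the sign-twisted idempotents, and the relation $\tau(e_\l) = e_{\l^t}$ converts a symmetrization of the $\P_i$-factors by $e_\l$ into a symmetrization of the $\Q_i$-factors by $e_{\l^t}$. This realizes $M^{[d]}$ as the $d$-th exterior power of $\P_i\Q_i$, which decomposes by the categorified dual Cauchy identity $\Lambda^d(\P_i\Q_i) \cong \bigoplus_{\l\vdash d}\P_i^{(\l)}\Q_i^{(\l^t)}$ (decategorifying to $\prod_{k,l}(1+x_ky_l) = \sum_\l s_\l s_{\l^t}$); each $\l$ occurs exactly once, which is why the term has no multiplicities. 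Inducing up the $n-d$ identity factors from $S_d\times S_{n-d}$ to $S_n$ supplies the outer $\1_n$, and the grading shifts combine (each box contributing a net $\la -1\ra$ after the intrinsic normalization of $\P_i\Q_i$) to give the stated $\la -d\ra$. Hence the degree $-d$ term of $\Sigma_i^{[n]}$ is $\bigoplus_{\l\vdash d}\P_i^{(\l)}\Q_i^{(\l^t)}\la -d\ra\1_n$, matching~(\ref{eq:cpx2}).

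Combining the two steps, Proposition~\ref{prop:Tcpx} yields that $\Sigma_i^{[n]}$ is homotopic to $\T_i$, i.e. isomorphic in $\Kom(\K_{Fock})$ to the complex~(\ref{eq:cpx2}). I expect the term computation to be the main obstacle: making the super-symmetrization precise, tracking the signs that produce the transpose $\l^t$, and pinning down the grading shifts (equivalently, establishing the categorified dual Cauchy identity cleanly) are the delicate points, whereas the indecomposability and the appeal to Proposition~\ref{prop:Tcpx} are essentially formal.
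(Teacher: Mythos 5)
Your proposal follows essentially the same route as the paper: identify the terms of $\Sigma_i^{[n]}$ with those of (\ref{eq:cpx2}), establish indecomposability, and invoke Proposition \ref{prop:Tcpx} to avoid computing differentials. The term identification you sketch is precisely the content of the paper's Proposition \ref{prop:1}: there the decomposition $\bigoplus_{\ul} B_{\ul} \rtimes \k[S_n] \cong \bigoplus_{\l \vdash d} \P_i^{(\l)}\Q_i^{(\l^t)} \la -d \ra \1_n$ is proved by writing down an explicit surjection $\phi$ out of $\bigoplus_{\l\vdash d} B_n e_\l \otimes_{B_{n-d}} e_{\l^t} B_n$, observing exactly your sign mechanism (a transposition of two odd factors acts by $-1$, so $\phi$ sends $(1,e_\l)\otimes(1,e_{\l'})$ to $\delta_{\l^t,\l'}(1,e_{\l^t})$, killing every pairing except $\l'=\l^t$), and then concluding by a dimension count; that count is what replaces your appeal to a ``categorified dual Cauchy identity'' and is what settles the multiplicity-one claim you assert. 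So the step you flag as the main obstacle is indeed where the paper does its real work, and your proposed mechanism matches its proof. Conversely, your indecomposability argument via invertibility (Corollary \ref{cor:braids}) and locality of $\End(\1_n)$ is correct and makes explicit a point the paper leaves unstated when it applies Proposition \ref{prop:Tcpx}.
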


\subsection{Proof of Theorem \ref{thm:braids}}

The statement of the Theorem \ref{thm:braids} only involves one vertex of the Dynkin diagram $D$. To simplify notation we will assume that $D$ contains only one vertex so that $B_\ep^D \cong \k[t]/t^2$. Notice that $\deg t = 2$ so $B_\ep^D$ is commutative (not supercommutative). We will use the notation $B := B_\ep^D$ and $B_n := B_\ep^D(n) = B^{\otimes n} \rtimes \k[S_n]$. Thus $\Sigma \1_1 = B' \rightarrow B$ as a $(B,B)$-bimodule where we denote $B' := B \otimes_\k B$. 

By definition the term of the complex $\Sigma_i^{[n]}$ in cohomological degree $-d$ is $\bigoplus_{\ul} B_\ul \rtimes \k[S_n]$ where $\ul = (\ell_1, \dots, \ell_d)$ with $0 \le \ell_1 < \dots < \ell_d \le n$ and $B_\ul$ is a tensor product over $\k$ of $B$s and $B'$s where the $B'$s occur exactly in positions $\ell_1, \dots, \ell_d$. For example, 
$$B_{(2,3)} = B \otimes_\k B' \otimes_\k B' \otimes_\k B \text{ where } n=4, d=2.$$

\begin{Remark}
The tensor product above is that of supervector spaces where $\deg B = 0$ and $\deg B' = 1$. So, for instance, the involution $(23) \in S_4$ acts on $B_{(2,3)}$ by 
$$(23) \cdot (b_1 \otimes b_2 \otimes b_3 \otimes b_4) \mapsto - (b_1 \otimes b_3 \otimes b_2 \otimes b_4)$$
while $(12) \in S_4$ acts as a map $B_{(2,3)} \rightarrow B_{(1,3)}$ by 
$$(12) \cdot (b_1 \otimes b_2 \otimes b_3 \otimes b_4) \mapsto (b_2 \otimes b_1 \otimes b_3 \otimes b_4).$$
\end{Remark}

\begin{prop}\label{prop:1}
There is an isomorphism of $(B_n,B_n)$-bimodules 
$$\bigoplus_{\l \vdash d} B_n e_\l \otimes_{B_{n-d}} e_{\l^t} B_n \xrightarrow{\sim} \bigoplus_{\ul} B_\ul \rtimes \k[S_n]$$
where $B_{n-d}$ acts on $B_n$ via the embedding 
$$B^{\otimes n-d} \ni b_1 \otimes \dots \otimes b_{n-d} \mapsto 1 \otimes \dots \otimes 1 \otimes b_1 \otimes \dots \otimes b_{n-d} \in B^{\otimes n}$$ 
and $e_\l \in \k[S_d] \subset B_d \subset B_n$ acts on the first $d$ factors (so it commutes with $B_{n-d}$). 
\end{prop}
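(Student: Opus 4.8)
The plan is to prove the isomorphism by first reducing to the ``core'' case $d=n$, where the right-hand side is $(B')^{\otimes n}\rtimes\k[S_n]$, and then computing that core case by combining the Wedderburn decomposition of $\k[S_n]$ with the sign twist $\tau$ coming from the super (homological) parity of the $B'$-slots. I would begin by recording the precise structure of the right-hand side: its summand $B_\ul$ is a tensor product over $\k$ of copies of $B$ and of $B'=B\otimes_\k B$, with $B'$ occurring in the $d$ slots $\ell_1<\dots<\ell_d$. The key point is that $B$ is concentrated in even internal degree (here $\deg t=2$), so the plain permutation action on $B^{\otimes n}$ carries no signs, whereas each $B'$ sits in odd degree (it is the homological-degree-$(-1)$ term of $\Sigma_i\1_1$); hence the wreathing action of $\k[S_n]$ permutes the $B'$-slots with the sign of the permutation. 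The left $B_n$-action uses the left tensor factors of the $B'$'s together with the left-regular copy of $\k[S_n]$, and the right $B_n$-action uses the right factors and the right-regular $\k[S_n]$.

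For the core case $d=n$ I would argue as follows. Reordering tensor factors gives a vector-space identification $(B')^{\otimes n}\cong B^{\otimes n}\otimes_\k B^{\otimes n}$, under which $(B')^{\otimes n}\rtimes\k[S_n]$ becomes $B_n\otimes_{\k[S_n]}B_n$, the middle tensor gluing the right-regular $\k[S_n]$ of the first factor to the left-regular $\k[S_n]$ of the second. The one subtlety is that, since the $n$ occupied slots are odd while $B^{\otimes n}$ is even, the wreathing action differs from the plain permutation action by the sign character; because the sign-twist automorphism of $\k[S_n]$ is exactly $\tau$ (it sends $s_i\mapsto -s_i$), this forces the gluing to pass through the $\tau$-twisted bimodule. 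I would then invoke the Wedderburn decomposition in the form $\k[S_n]\cong\bigoplus_{\l\vdash n}\k[S_n]e_\l\otimes_\k e_\l\k[S_n]$ of $(\k[S_n],\k[S_n])$-bimodules, valid because $e_\l$ is a matrix unit and hence $e_\l\k[S_n]e_\l\cong\k$. Substituting into $B_n\otimes_{\k[S_n]}B_n$ and using $B_n\otimes_{\k[S_n]}\k[S_n]e_\l\cong B_n e_\l$ gives $\bigoplus_\l B_n e_\l\otimes_\k e_\l B_n$; the $\tau$-twist on the second factor replaces the right idempotent $e_\l$ by $\tau(e_\l)=e_{\l^t}$, producing exactly $\bigoplus_{\l\vdash n}B_n e_\l\otimes_\k e_{\l^t}B_n$. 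As a check, both sides have dimension $4^n\,n!$, using $\sum_\l h_\l^2=n!$ and $h_{\l^t}=h_\l$.

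To pass from the core case to general $d$, I would use that $\k[S_n]$ acts transitively on the $d$-element subsets $\ul\subset\{1,\dots,n\}$ with stabilizer $S_d\times S_{n-d}$ (the symmetries of the occupied slots $\{1,\dots,d\}$ and of the unoccupied ones). Thus $\bigoplus_\ul B_\ul\rtimes\k[S_n]$ is induced from the single summand $B_{(1,\dots,d)}=(B')^{\otimes d}\otimes_\k B^{\otimes(n-d)}$, whose first $d$ (odd) slots carry the core object $(B')^{\otimes d}\rtimes\k[S_d]$ and whose last $n-d$ slots carry the regular bimodule $B_{n-d}$. Feeding in the core computation replaces the first block by $\bigoplus_{\l\vdash d}B_d e_\l\otimes_\k e_{\l^t}B_d$, and inducing up to $B_n$ turns $B_d e_\l$ into $B_n e_\l$ and $e_{\l^t}B_d$ into $e_{\l^t}B_n$, while the $B_{n-d}$-block that previously separated the two idempotents becomes the algebra over which we tensor, converting the $\otimes_\k$ of the core case into $\otimes_{B_{n-d}}$. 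This yields $\bigoplus_{\l\vdash d}B_n e_\l\otimes_{B_{n-d}}e_{\l^t}B_n$, as claimed.

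The main obstacle will be the sign/super bookkeeping: one must verify carefully that the wreathing action on the odd $B'$-slots differs from the naive permutation action by precisely a single factor of $\mathrm{sgn}$, so that the transpose $\l\mapsto\l^t$ (rather than $\l$) appears on the right-hand factor via $\tau(e_\l)=e_{\l^t}$. A secondary, more routine, point is checking that the induction isomorphism of the previous paragraph is well defined as a map of $(B_n,B_n)$-bimodules, i.e. that the $S_n$-elements crossing between distinct summands $B_\ul$ are matched correctly; here it is safest to write the explicit map on generators $x e_\l\otimes e_{\l^t}y$ and confirm bijectivity, using the dimension identity $\dim\big(\bigoplus_\ul B_\ul\rtimes\k[S_n]\big)=\binom{n}{d}4^d 2^{n-d}n!$ as a final consistency check.
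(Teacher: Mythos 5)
Your proposal is correct, but it is organized genuinely differently from the paper's proof. The paper treats all $d$ at once: it defines a single explicit bimodule map $\phi: B_n\otimes_{B_{n-d}}B_n\to\bigoplus_{\ul}B_\ul\rtimes\k[S_n]$ by sending the generator $(1^{\otimes n},1)\otimes(1^{\otimes n},1)$ to the generator of $B_{(1,\dots,d)}$, checks well-definedness and surjectivity, and then uses exactly your sign observation, in the form $\phi((1^{\otimes n},s_i)\otimes(1^{\otimes n},1))=-(1^{\otimes n},s_i)=(1^{\otimes n},\tau(s_i))$, to show that the cross-summands $B_ne_\l\otimes_{B_{n-d}}e_{\l'}B_n$ with $\l'\neq\l^t$ lie in the kernel; restricting $\phi$ to $\bigoplus_{\l}B_ne_\l\otimes_{B_{n-d}}e_{\l^t}B_n$ then gives a surjection, and a dimension count finishes. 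You instead modularize: the core case $d=n$ is handled by identifying $(B')^{\otimes n}\rtimes\k[S_n]$ with a $\tau$-twisted tensor product $B_n\otimes_{\k[S_n]}B_n$ and invoking the Wedderburn bimodule decomposition $\k[S_n]\cong\bigoplus_{\l}\k[S_n]e_\l\otimes_\k e_\l\k[S_n]$, so the transpose pairing emerges structurally from $\tau(e_\l)=e_{\l^t}$ with no kernel analysis; general $d$ is then deduced by parabolic induction along $S_d\times S_{n-d}$. Both routes rest on the same two ingredients, the superpermutation sign on the odd $B'$-slots and semisimplicity of $\k[S_d]$, but they package them differently: your version makes the appearance of $\l\mapsto\l^t$ conceptual (a twisted tensor product rather than the outcome of a computation), while the paper's is more economical and self-contained. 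Note that your induction step is not free of charge: identifying $\bigoplus_{\ul}B_\ul\rtimes\k[S_n]$ with $B_n\otimes_P\left(B_{(1,\dots,d)}\rtimes\k[S_d\times S_{n-d}]\right)\otimes_PB_n$, where $P=B^{\otimes n}\rtimes\k[S_d\times S_{n-d}]$ is the parabolic subalgebra, itself requires a surjectivity-plus-dimension-count (or freeness of $B_n$ over $P$) argument, and the conversion of the core case's $\otimes_\k$ into $\otimes_{B_{n-d}}$ requires writing $B_de_\l\otimes_\k e_{\l^t}B_d\otimes_\k B_{n-d}\cong Pe_\l\otimes_{B_{n-d}}e_{\l^t}P$ as $(P,P)$-bimodules; once these are spelled out, the total effort is comparable to the paper's direct check of $\phi$.
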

\begin{proof}
Consider the map $\phi: B_n \otimes_{B_{n-d}} B_n \rightarrow \oplus_{\ul} B_\ul \rtimes \k[S_n]$ given by 
$$(1^{\otimes n},1) \otimes (1^{\otimes n},1) \mapsto (1^{\otimes d} \otimes 1^{\otimes n-d},1) \in B_{(1,2,\dots,d)} = (B')^{\otimes d} \otimes B^{\otimes n-d} \subset B_{\ul}.$$
This map is well defined since if $(b_1 \otimes \dots \otimes b_{n-d},\sigma) \in B_{n-d}$ then 
\begin{align*}
\phi((1^{\otimes d} \otimes b_1 \otimes \dots \otimes b_{n-d},\sigma) \otimes (1^{\otimes n},1))
& = (1^{\otimes d} \otimes b_1 \otimes \dots \otimes b_{n-d},\sigma) \\
& = \phi((1^{\otimes n},1) \otimes (1^{\otimes d} \otimes b_1 \otimes \dots \otimes b_{n-d},\sigma)).
\end{align*}
Moreover, since $(1^{\otimes d} \otimes 1^{\otimes n-d},1) \in B_{(1,2,\dots,d)}$ generates $B_{\ul}$ as a $(B_n,B_n)$-bimodule $\phi$ is also surjective. 

Now, 
$$B_n \otimes_{B_{n-d}} B_n \cong \bigoplus_{\l,\l' \vdash d} \left( B_n e_\l \otimes_{B_{n-d}} e_{\l'} B_n \right)^{\oplus d_\l d_{\l'}}$$
where $d_\l$ (resp. $d_{\l'}$) is the dimension of the irreducible $\k[S_d]$-module $V_\l$ (resp. $V_{\l'}$) indexed by the partition $\l$ (resp. $\l'$). Now, for $s_i \in S_d$ we have
$$\phi((1^{\otimes n}, s_i) \otimes (1^{\otimes n}, 1)) = - (1^{\otimes n}, s_i) = (1^{\otimes n}, \tau(s_i))$$
where the minus sign is because $s_i$ acts on $B' \otimes_\k B'$ by $s_i \cdot (1 \otimes 1) = - (1 \otimes 1)$. Subsequently, 
$$\phi((1^{\otimes n}, e_{\l}) \otimes (1^{\otimes n}, e_{\l'})) = (1^{\otimes n}, \tau(e_{\l}) e_{\l'}) = \delta_{\l^t,\l'} (1^{\otimes n}, e_{\l^t}).$$
This means that $B_n e_\l \otimes_{B_{n-d}} e_{\l'} B_n$ is in the kernel of $\phi$ unless $\l' = \l^t$. We conclude that
\begin{equation}\label{eq:iso}
\phi: \bigoplus_{\l \vdash d} B_n e_{\l} \otimes_{B_{n-d}} e_{\l^t} B_n \longrightarrow \oplus_{\ul} B_{\ul} \rtimes \k[S_n]
\end{equation}
is surjective. 

To show that the map in (\ref{eq:iso}) is an isomorphism we compute the dimensions over $\k$ of both sides. On the one hand $\dim_\k B_\ul = (\dim_\k B')^d (\dim_\k B)^{n-d} = 4^d 2^{n-d}$. Thus 
\begin{equation}\label{eq:1}
\dim_\k \left( \oplus_\ul B_\ul \rtimes \k[S_n] \right) = \binom{n}{d} 2^{n+d} n!.
\end{equation}
On the other hand, 
$$\dim_\k (B_n e_\l) = \dim_\k B_n \cdot \frac{ \dim_\k V_\l}{d!} = \frac{ 2^n n! \dim_\k V_\l}{d!}$$
and likewise $\dim_\k (e_{\l^t} B_n) = \frac{2^n n! \dim_\k V_\l}{d!}$. Since $B_n e_\l$ and $e_{\l^t} B_n$ are free $B_{n-d}$ modules it follows that 
$$\dim_{\k} \left( B_n e_{\l} \otimes_{B_{n-d}} e_{\l^t} B_n \right) = \frac{1}{\dim_\k B_{n-d}} \frac{2^{2n} n! n! (\dim_\k V_\l)^2}{d!d!} = \binom{n}{d} 2^{n+d} n! \frac{(\dim_\k V_\l)^2}{d!}.$$
Summing over all $\l \vdash d$ and using that $\sum_{\l \vdash d} (\dim_\k V_\l)^2 = d!$ we get the same dimension as in (\ref{eq:1}). Thus (\ref{eq:iso}) must be an isomorphism. 
\end{proof}

Theorem \ref{thm:braids} now follows by combining Proposition \ref{prop:1} with Proposition \ref{prop:Tcpx} which says that any indecomposable complex such as that in (\ref{eq:cpx2}) is unique up to homotopy. 

\begin{cor}\label{cor:braids2}
In $\Kom(\K_{Fock})$ the complexes from (\ref{eq:cpx2}) satisfy the braid relations of $\Br(D)$. 
\end{cor}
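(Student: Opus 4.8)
The plan is to deduce the braid relations for the complexes (\ref{eq:cpx2}) directly from those already established for the wreath-product complexes $\Sigma_i^{[n]}$, with no new computation. Recall that Theorem \ref{thm:braids} provides, for each vertex $i \in I$, an isomorphism in $\Kom(\K_{Fock})$ between $\Sigma_i^{[n]}$ and the complex (\ref{eq:cpx2}), namely $\T_i \1_n$. Thus the two families of $1$-morphisms $\{\Sigma_i^{[n]}\}_{i \in I}$ and $\{\T_i \1_n\}_{i \in I}$ agree object-by-object in the homotopy $2$-category, and the entire task reduces to transporting the relations of Corollary \ref{cor:braids} across these isomorphisms.

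Next I would observe that composition of $1$-morphisms in $\Kom(\K_{Fock})$ is given by tensor product of complexes of bimodules, an operation which descends to the homotopy category: homotopy-equivalent complexes tensor to homotopy-equivalent complexes. Consequently a homotopy equivalence in any single tensor factor may be substituted without changing the homotopy class of the composite. Applying the isomorphisms of Theorem \ref{thm:braids} factor-by-factor therefore yields
$$\Sigma_i^{[n]} \Sigma_j^{[n]} \Sigma_i^{[n]} \1_n \cong \T_i \T_j \T_i \1_n \quad \text{and} \quad \Sigma_j^{[n]} \Sigma_i^{[n]} \Sigma_j^{[n]} \1_n \cong \T_j \T_i \T_j \1_n,$$
together with the analogous isomorphism $\Sigma_i^{[n]} \Sigma_j^{[n]} \1_n \cong \T_i \T_j \1_n$ relevant to the commuting relation when $\la i,j \ra = 0$.

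Finally I would invoke Corollary \ref{cor:braids}, whose second and third bullets assert exactly that the left-hand sides above are homotopy equivalent whenever $\la i,j \ra = -1$, and that the two orderings commute whenever $\la i,j \ra = 0$. Transporting these equivalences across the isomorphisms displayed above produces the corresponding relations among the $\T_i \1_n$, which is precisely the content of the corollary; invertibility of each $\T_i \1_n$, with inverse corresponding to $(\Sigma_i^{-1})^{[n]}$, follows in the same way from the first bullet of Corollary \ref{cor:braids}. I do not expect any genuine obstacle here, since the argument is purely formal. The only point requiring (routine) care is the compatibility of the isomorphism of Theorem \ref{thm:braids} with composition of $1$-morphisms in $\Kom(\K_{Fock})$, i.e. that it is natural enough to be applied inside a composite one tensor factor at a time.
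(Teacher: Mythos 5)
Your proposal is correct and matches the paper's (implicit) argument exactly: the corollary is stated without separate proof precisely because it follows immediately by combining Theorem \ref{thm:braids}, which identifies $\Sigma_i^{[n]}$ with the complex (\ref{eq:cpx2}) in $\Kom(\K_{Fock})$, with Corollary \ref{cor:braids}, which gives the braid relations for the $\Sigma_i^{[n]}$. Your added remark that tensoring complexes of bimodules respects homotopy equivalence, so the identifications may be applied one factor at a time, is the same routine compatibility the paper takes for granted.
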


\subsection{The braid relations in integrable 2-representations}

We now consider an arbitrary integrable 2-representation $\K$ of $\h$ and prove Theorem \ref{thm:main1} using Theorem \ref{thm:braids}. We will show that $\T_i \T_i^{-1} \1_n \xrightarrow{\sim} \1_n$ in $\Kom(\oH)$ (the proof of the other braid relations is similar). 

The composition $\T_i \T_i^{-1} \1_n$ is a (finite) complex of 1-morphisms in $\oH$. Decompose each term in this complex into indecomposables of the form $\P_i^{(\l)} \Q_i^{(\mu)} \la \ell \ra \1_n$ where $\ell \in \Z$ and $\l,\mu$ are partitions. Since $\End^k(\P_i^{(\l)} \Q_i^{(\mu)} \la \ell \ra \1_n)$ is zero if $k < 0$ and one-dimensional if $k=0$ we can restrict $\T_i \T_i^{-1} \1_n$ to these terms to get a complex of the form $\P_i^{(\l)} \Q_i^{(\mu)} \la \ell \ra \1_n \otimes_\k V_\bullet$ where $V_\bullet$ is a complex of vector spaces. 

Using the Cancellation Lemma \ref{lem:cancel} it suffices to show that $V_\bullet$ is exact (unless $\l = \mu = \emptyset$ and $\ell = 0$ in which case $V_\bullet$ should have one-dimensional cohomology in degree zero). To see this consider the image of $\T_i \T_i^{-1} \1_n$ in $\Kom(\K_{Fock})$. By Theorem \ref{thm:braids} this complex is homotopic to $\1_n$. Thus the image of $V_\bullet$ is exact and so $V_\bullet$ must also be exact. 

\begin{lemma}\label{lem:cancel}
Let $ X, Y, Z, W, U, V$ be six objects in an additive category and consider a complex
\begin{equation}\label{eq:6.5}
\dots \rightarrow U \xrightarrow{u} X \oplus Y \xrightarrow{f} Z \oplus W \xrightarrow{v} V \rightarrow \dots
\end{equation}
where $f = \left( \begin{matrix} A & B \\ C & D \end{matrix} \right)$ and $u,v$ are arbitrary morphisms. If $D: Y \rightarrow W$ is an isomorphism, then (\ref{eq:6.5}) is homotopic to a complex
\begin{eqnarray}\label{eq:new}
\dots \rightarrow U \xrightarrow{u} X \xrightarrow{A-BD^{-1}C} Z \xrightarrow{v|_Z} V \rightarrow \dots
\end{eqnarray}
\end{lemma}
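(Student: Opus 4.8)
The plan is to prove this by Gaussian elimination: exhibit an explicit isomorphism of complexes that makes the middle differential $f$ block diagonal, and then split off a contractible two-term summand. First I would record the block forms of the neighboring maps, writing $u = \begin{pmatrix} u_X \\ u_Y \end{pmatrix} : U \to X \oplus Y$ and $v = \begin{pmatrix} v_Z & v_W \end{pmatrix} : Z \oplus W \to V$. The hypothesis that (\ref{eq:6.5}) is a complex gives $fu = 0$ and $vf = 0$; reading off the components landing in $W$ and emanating from $Y$ yields $u_Y = -D^{-1} C\, u_X$ and $v_W = -v_Z B D^{-1}$, and this is the only place where invertibility of $D$ enters the computation of the adjacent maps.

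The key step is to conjugate $f$ by the automorphisms $\psi = \begin{pmatrix} 1 & -BD^{-1} \\ 0 & 1 \end{pmatrix}$ of $Z \oplus W$ and $\phi = \begin{pmatrix} 1 & 0 \\ D^{-1} C & 1 \end{pmatrix}$ of $X \oplus Y$. A direct computation gives $\psi f \phi^{-1} = \begin{pmatrix} A - BD^{-1}C & 0 \\ 0 & D \end{pmatrix}$, so that $f$ becomes block diagonal. Since these are isomorphisms, applying them defines an isomorphism of complexes under which $u$ is replaced by $\phi u$ and $v$ by $v \psi^{-1}$. Using the two relations from the previous step, the $Y$-component of $\phi u$ and the $W$-component of $v\psi^{-1}$ both vanish, so the transformed complex reads
$$\dots \to U \xrightarrow{\begin{pmatrix} u_X \\ 0 \end{pmatrix}} X \oplus Y \xrightarrow{\begin{pmatrix} A-BD^{-1}C & 0 \\ 0 & D \end{pmatrix}} Z \oplus W \xrightarrow{\begin{pmatrix} v_Z & 0 \end{pmatrix}} V \to \dots$$
which manifestly splits as a direct sum of the complex (\ref{eq:new}) and the two-term complex $Y \xrightarrow{D} W$ concentrated in the two relevant cohomological degrees.

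Finally, since $D$ is an isomorphism, the complex $Y \xrightarrow{D} W$ is contractible, hence null-homotopic, so discarding it yields a homotopy equivalence between the transformed complex and (\ref{eq:new}). Composing with the isomorphism of complexes from the second step gives the desired homotopy equivalence between (\ref{eq:6.5}) and (\ref{eq:new}); note that $u_X$ and $v_Z$ are precisely the maps denoted $u$ and $v|_Z$ in the statement. I do not expect a serious obstacle: the argument is entirely formal, and the only content is the invertibility of $D$, used both to define the conjugating automorphisms and to contract the split-off summand. The one point requiring care is the bookkeeping of how the adjacent maps transform under the change of basis, together with the verification that the complex relations force the off-diagonal couplings to disappear — this is mechanical but must be carried out with attention to which side each automorphism acts on.
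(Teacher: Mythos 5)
Your proof is correct, and it is exactly the ``Gaussian elimination'' argument that the paper invokes: the paper gives no in-line proof of this lemma, instead citing Bar-Natan's cancellation lemma and Lemma 6.2 of \cite{CL2}, so your writeup simply supplies the standard argument being outsourced. All the computations check out: the complex relations $fu=0$ and $vf=0$ do force $u_Y = -D^{-1}Cu_X$ and $v_W = -v_ZBD^{-1}$, the change of basis $\psi f \phi^{-1}$ is block diagonal as claimed, the transformed adjacent maps lose their $Y$- and $W$-components, and splitting off the contractible summand $Y \xrightarrow{D} W$ yields the homotopy equivalence with (\ref{eq:new}).
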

\begin{proof}
The following result is a slight generalization of a lemma which Bar-Natan \cite{BN} calls ``Gaussian elimination''. For a proof see Lemma 6.2 of \cite{CL2}.
\end{proof}

\section{Convolution in triangulated 2-representations and Hilbert schemes}

Consider an affine Dynkin diagram where $\Gamma \subset SL_2(\C)$ is the finite subgroup associated to it via the McKay correspondence. Let $X_\Gamma = \widehat{\C^2/\Gamma}$ be the minimal resolution and $X^{[n]}_{\Gamma}$ the Hilbert scheme of $n$ points. 

In \cite{CL1} we constructed a Heisenberg 2-represention on the derived categories of coherent sheaves $\oplus_{n \ge 0} D_{\C^\times}(X_\Gamma^{[n]})$. Subsequently, Theorem \ref{thm:main1} induces an action of the affine braid group on $\Kom(D_{\C^\times}(X_\Gamma^{[n]}))$. We would like to explain now how this action descends to one on $D_{\C^\times}(X_\Gamma^{[n]})$. The main result we prove is the following. 

\begin{Theorem}\label{thm:geom}
For each $n \geq 0$, the complexes $\Sigma_i \1_n$ have a unique convolution 
$$\Conv(\Sigma_i \1_n) \in D_{\C^\times \times \C^\times}(X_\Gamma^{[n]} \times X_\Gamma^{[n]}).$$ 
These convolutions define an action of the affine braid group on $D_{\C^*}(X_\Gamma^{[n]})$.
\end{Theorem}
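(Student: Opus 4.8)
The plan is to realize each complex of Fourier--Mukai kernels $\Sigma_i \1_n$ as a single object (kernel) in $D_{\C^\times \times \C^\times}(X_\Gamma^{[n]} \times X_\Gamma^{[n]})$ via convolution, and then to transport the braid relations --- which already hold at the level of complexes by Theorem \ref{thm:main1} (equivalently Corollary \ref{cor:braids2}) --- down to honest isomorphisms of these kernels. Concretely there are two things to establish: (a) that the complex $\Sigma_i \1_n$ admits a convolution which is \emph{unique}, so that $\Conv(\Sigma_i \1_n)$ is well defined; and (b) that the operation $\Conv$ is compatible with composition, so that a braid word in the $\Sigma_i$'s convolves to the corresponding composite (Fourier--Mukai convolution) of the kernels $\Conv(\Sigma_i \1_n)$. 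Granting (a) and (b), the homotopy equivalences of complexes furnished by the braid relations become isomorphisms of the associated kernels, and the Fourier--Mukai transform then yields the desired action of the affine braid group $\Br(D)$ on $D_{\C^\times}(X_\Gamma^{[n]})$.

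For (a) I would invoke the standard existence/uniqueness criterion for convolutions (Postnikov systems) in a triangulated category: if $A_\bullet$ is the complex with terms $A_d = \bigoplus_{\l \vdash d} \P_i^{(\l)} \Q_i^{(\l^t)} \la -d \ra \1_n$, then it suffices to verify that the graded Hom groups $\Hom(A_p, A_q[k])$ between the terms vanish in the appropriate negative range. The essential point is that $D_{\C^\times \times \C^\times}(X_\Gamma^{[n]} \times X_\Gamma^{[n]})$ carries, in addition to its homological shift $[1]$, the internal grading shift $\la 1 \ra$ recorded by the extra $\C^\times$ factor, and that the differentials in $\Sigma_i \1_n$ are homogeneous and strictly raise this internal degree. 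The required vanishing is then a positivity statement: it is supplied by integrability of the $2$-representation together with the condition $\Hom_{\K}(\1_n, \1_n \la \ell \ra) = 0$ for $\ell < 0$, propagated to the terms $\P_i^{(\l)} \Q_i^{(\l^t)}$ via the computations in Lemmas \ref{lem:1}, \ref{lem:2} and \ref{lem:3}. Because the differentials are strictly positive in internal degree, the convolution is in fact unique, and $\Conv(\Sigma_i \1_n)$ is canonically defined.

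Step (b) then follows formally from the uniqueness in (a): given two such complexes $\Sigma_i \1_n$ and $\Sigma_j \1_n$, their composite (the total complex of the convolution of kernels) is again a complex whose terms and differentials satisfy the same positivity, hence has a unique convolution; uniqueness forces $\Conv(\Sigma_i \1_n \ast \Sigma_j \1_n) \cong \Conv(\Sigma_i \1_n) \ast \Conv(\Sigma_j \1_n)$, where $\ast$ denotes convolution of kernels. Applying this repeatedly, each side of a braid relation convolves to the composite of the corresponding single kernels, and the homotopy equivalence between the two complexes (Theorem \ref{thm:main1}) descends to an isomorphism of convolutions. That the resulting autoequivalences agree with the action induced by Ploog's construction \cite{P} from the $n=1$ spherical twists on $D_{\C^\times}(X_\Gamma)$ is then a matter of identifying $\Conv(\Sigma_i \1_1)$ with the corresponding spherical twist and invoking naturality of $(\cdot)^{[n]}$.

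I expect the main obstacle to be step (a), and specifically the verification of the negative-degree Hom-vanishing in the geometric category. The subtlety is bookkeeping the interaction between the homological degree (cohomological position $-d$ together with $[1]$) and the internal $\C^\times$-weight recorded by $\la 1 \ra$: one must check that no term $A_p$ maps to a term $A_q$ in a strictly negative total degree, so that the iterated cones can be formed unambiguously. This is where the positivity built into integrability is indispensable, and where the precise form of the grading shifts in the Hilbert-scheme $2$-representation (as opposed to the abstract combinatorics of $\H$) must be pinned down.
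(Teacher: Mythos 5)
Your step (a) is essentially the paper's own argument: uniqueness of $\Conv(\Sigma_i \1_n)$ follows from the Postnikov-system criterion (Proposition \ref{th:uniquecone}) plus a positivity argument, which the paper phrases either as counting adjunction caps (a map lowering $|\l|$ by $m$ must contain $m$ caps, hence has degree at least $m$) or as reducing by adjunction and the Heisenberg commutation relations to $\Hom(\1_0,\1_0\la s \ra)=0$ for $s<0$. That part of your proposal is sound and matches the paper.

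The genuine gap is in step (b). You assert that a composite such as $\Sigma_i \1_n \circ \Sigma_i^{-1}\1_n$ "satisfies the same positivity, hence has a unique convolution," and you derive $\Conv(\Sigma_i \1_n \ast \Sigma_j \1_n)\cong \Conv(\Sigma_i\1_n)\ast\Conv(\Sigma_j\1_n)$ from that uniqueness. This is exactly what the paper does \emph{not} know how to prove -- it says explicitly "we do not know that it has a unique convolution." The positivity that works for a single $\Sigma_i\1_n$ (each term in cohomological degree $-d$ carries twist $\la -d\ra$, and all maps between terms are built from counits, each of degree $\geq 1$) breaks for composites: $\Sigma_i^{-1}\1_n$ carries twists of the opposite sign, the terms of the composite are kernels of the form $\P_i^{(\l)}\Q_i^{(\l^t)}\P_i^{(\mu)}\Q_i^{(\mu^t)}$, and maps between such kernels can involve units (cups) as well as counits, so the cap-counting bound no longer yields the vanishing $\Hom(A_j[k],A_{j+k+1})=0$ required by Proposition \ref{th:uniquecone}. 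Moreover, even granting uniqueness for composites, your last step -- that the homotopy equivalence of complexes "descends to an isomorphism of convolutions" -- is not formal, because convolutions are not functorial in maps of complexes. The paper's actual mechanism, which your proposal is missing, is Proposition \ref{prop:complexzero}: in a Krull--Schmidt triangulated category, \emph{every} convolution of a null-homotopic complex is zero (proved by induction using the Gaussian-elimination Lemma \ref{lem:cancel}). With this in hand one runs the cone trick: the homotopy equivalence $\Sigma_i\1_n\circ\Sigma_i^{-1}\1_n\to \1_n$ has null-homotopic cone, so any convolution of the composite is isomorphic to $\1_n$; since $\Conv(\Sigma_i\1_n)\circ\Conv(\Sigma_i^{-1}\1_n)$ is \emph{some} convolution of the composite, it is isomorphic to $\1_n$, and similarly for the other braid relations. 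Without this argument (or an actual proof of your uniqueness claim for composite complexes), step (b) does not go through.
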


The existence of such an affine braid group action is known by combining \cite{ST,P}. The convolutions above give another interpretation of this action by showing that it arises from a categorical Heisenberg action. 

\subsection{Convolutions}

Let $A_\bullet = A_0 \xrightarrow{f_1} A_1 \rightarrow \cdots \xrightarrow{f_n} A_n $ be a sequence of objects and morphisms in a triangulated category $ \mathcal{D}$ such that $ f_{i+1} \circ f_i = 0 $. Such a sequence is called a complex.

A \textit{(right) convolution} of a complex $A_\bullet $ is any object $B$ such that there exist
\begin{enumerate}
\item objects $ A_0 = B_0, B_1, \dots, B_{n-1}, B_n = B $ and
\item morphisms $g_j : B_{j-1} \rightarrow A_j$, $h_j : A_j \rightarrow B_j $ (with $h_0 = id$)
\end{enumerate}
such that $B_{j-1} \xrightarrow{g_j} A_i \xrightarrow{h_j} B_j$ is a distinguished triangle for each $i$ and $ g_j \circ h_{j-1} = f_j $. Such a collection of data is called a \textit{Postnikov system}. We will denote $B_n$ by $\Conv(A_\bullet)$. When $n=1$ then $B_n$ is isomorphic to the usual cone. 

\begin{prop}\cite[Prop. 8.3]{CK1}\label{th:uniquecone}
Consider a complex $A_\bullet$.
\begin{enumerate}
\item
If $\Hom(A_j[k], A_{j+k+1}) = 0 $ for all $ j \ge 0, k \ge 1 $, then any two convolutions of $ (A_\bullet, f_\bullet) $ are isomorphic.
\item
If $ \Hom(A_j[k], A_{j+k+2}) = 0 $ for all $ j \ge 0, k \ge 1 $, then $ (A_\bullet, f_\bullet) $ has a convolution.
\end{enumerate}
\end{prop}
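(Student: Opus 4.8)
The plan is to build the Postnikov tower $B_0 = A_0, B_1, \dots, B_n$ one stage at a time, establishing existence (part 2) and uniqueness (part 1) by a parallel induction on the length $n$. The single structural fact underlying both arguments is that each partial convolution $B_k$ admits an iterated-cone filtration whose successive subquotients are $A_k, A_{k-1}[1], A_{k-2}[2], \dots, A_0[k]$. This is immediate by induction: rotating the defining triangle $B_{k-1} \xrightarrow{g_k} A_k \xrightarrow{h_k} B_k \to B_{k-1}[1]$ exhibits $B_k$ as an extension of $B_{k-1}[1]$ by $A_k$, so the subquotients of $B_k$ are $A_k$ together with the $[1]$-shifts of those of $B_{k-1}$. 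Feeding this filtration into the long exact sequences of the defining triangles shows, for any $r$ and any object $W$, that $\Hom(B_r[1], W) = 0$ as soon as $\Hom(A_{r-m}[m+1], W) = 0$ for all $0 \le m \le r$. This dévissage is the device that converts the hypotheses, which constrain only the individual terms $A_j$, into vanishing statements about the $B_k$.

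For existence (part 2), I would assume $B_0, \dots, B_{k-1}$ and the maps $g_j, h_j$ ($j \le k-1$) already built, and reduce the inductive step to producing a lift $g_k : B_{k-1} \to A_k$ of $f_k$ along $h_{k-1}$, i.e. with $g_k \circ h_{k-1} = f_k$; one then sets $B_k := \Cone(g_k)$. Applying $\Hom(-, A_k)$ to the triangle defining $B_{k-1}$, such a lift exists exactly when $f_k \circ g_{k-1} = 0$ in $\Hom(B_{k-2}, A_k)$. The complex condition together with $f_{k-1} = g_{k-1} \circ h_{k-2}$ gives $f_k \circ g_{k-1} \circ h_{k-2} = f_k \circ f_{k-1} = 0$, so peeling off one more triangle shows that $f_k \circ g_{k-1}$ factors through $B_{k-3}[1]$. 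By the structural fact, $\Hom(B_{k-3}[1], A_k)$ is assembled from the groups $\Hom(A_i[\ell], A_{i+\ell+2})$ with $\ell \ge 1$, all of which vanish by hypothesis (2); hence $f_k \circ g_{k-1} = 0$ and $g_k$ exists.

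For uniqueness (part 1), given two Postnikov systems $\{B_k, g_k, h_k\}$ and $\{B_k', g_k', h_k'\}$, I would construct isomorphisms $\psi_k : B_k \to B_k'$ with $\psi_k \circ h_k = h_k'$ by induction, starting from $\psi_0 = \id_{A_0}$. Given $\psi_{k-1}$, the pair $(\psi_{k-1}, \id_{A_k})$ extends to a morphism between the triangles $B_{k-1} \to A_k \to B_k$ and $B_{k-1}' \to A_k \to B_k'$ as soon as the square $g_k' \circ \psi_{k-1} = g_k$ commutes, and the triangulated five-lemma then forces the resulting $\psi_k$ to be an isomorphism. Writing $\delta := g_k' \circ \psi_{k-1} - g_k$, the relations $g_k \circ h_{k-1} = f_k = g_k' \circ h_{k-1}'$ and $\psi_{k-1} \circ h_{k-1} = h_{k-1}'$ give $\delta \circ h_{k-1} = 0$, so $\delta$ factors through $B_{k-2}[1]$. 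Here $\Hom(B_{k-2}[1], A_k)$ is assembled from $\Hom(A_i[\ell], A_{i+\ell+1})$ with $\ell \ge 1$, which vanish by hypothesis (1); thus $\delta = 0$, the square commutes, and $\psi_k$ exists.

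The only genuine work is the bookkeeping of homological degrees in these two dévissage arguments. The point is that peeling off a single triangle — using $f_k \circ f_{k-1} = 0$ in the existence case and the compatibility $\psi_{k-1} \circ h_{k-1} = h_{k-1}'$ in the uniqueness case — lands the obstruction in precisely the range $A_i[\ell] \to A_{i+\ell+2}$ (existence) or $A_i[\ell] \to A_{i+\ell+1}$ (uniqueness), with $\ell \ge 1$. Matching these two index patterns to the two hypotheses, the extra shift in the existence case reflecting that the obstruction there starts one stage deeper in the tower (on $B_{k-2}$ rather than $B_{k-1}$), is the crux of the statement; everything else is the standard formalism of long exact sequences and the five-lemma in a triangulated category.
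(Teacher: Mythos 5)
Your proof is correct: the inductive construction of the Postnikov tower, with the obstruction at each stage killed by d\'evissage through the iterated-cone filtration of the partial convolutions $B_k$, is exactly the standard argument, and your index bookkeeping (obstruction landing in $\Hom(A_j[\ell],A_{j+\ell+2})$, $\ell \ge 1$, for existence via $\Hom(B_{k-3}[1],A_k)$, and in $\Hom(A_j[\ell],A_{j+\ell+1})$ for uniqueness via $\Hom(B_{k-2}[1],A_k)$) matches the two hypotheses precisely, including the edge cases $k\le 2$ where the obstruction vanishes directly from $f_k\circ f_{k-1}=0$. Note that the paper itself supplies no proof, deferring entirely to \cite[Prop.~8.3]{CK1}, and your argument is essentially the one given in that reference.
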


\begin{prop}\label{prop:complexzero}
Suppose $\D$ is a triangulated category which satisfies the Krull-Schmidt property. If $A_\bullet$ is a complex of objects in $\D$ which is homotopic to zero then $\Conv(A_\bullet) \cong 0$.
\end{prop}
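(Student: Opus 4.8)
The plan is to reduce the contractibility of $A_\bullet$ to a concrete splitting of its terms and then to trace that splitting through an arbitrary Postnikov system, so that \emph{every} convolution — not merely one convenient choice — is seen to vanish. First I would unpack the hypothesis: $A_\bullet = A_0 \xrightarrow{f_1} \cdots \xrightarrow{f_n} A_n$ being homotopic to zero means there are morphisms $s_k \colon A_k \to A_{k-1}$ with $\id_{A_k} = s_{k+1} f_{k+1} + f_k s_k$ for all $k$ (terms with out-of-range indices taken to be zero). The key observation is that $p_k := s_{k+1} f_{k+1} \colon A_k \to A_k$ is idempotent: expanding $p_k^2$, substituting $f_{k+1} s_{k+1} = \id_{A_{k+1}} - s_{k+2} f_{k+2}$, and using $f_{k+2} f_{k+1} = 0$ yields $p_k^2 = p_k$. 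Since $\D$ is Krull--Schmidt it is idempotent complete, so $p_k$ and its complement $q_k := \id_{A_k} - p_k$ split $A_k \cong P_k \oplus Q_k$.

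Next I would show that $f_\bullet$ is strictly block-triangular for this splitting. The identities $f_{k+1} q_k = f_{k+1} f_k s_k = 0$ and $p_{k+1} f_{k+1} = s_{k+2} f_{k+2} f_{k+1} = 0$ (both immediate from $f \circ f = 0$) show that $f_{k+1}$ annihilates $Q_k$ and has image in $Q_{k+1}$, while the relations $s_{k+1} f_{k+1} = p_k$ and $f_{k+1} s_{k+1} = q_{k+1}$ exhibit $f_{k+1}|_{P_k} \colon P_k \to Q_{k+1}$ as an isomorphism with inverse induced by $s_{k+1}$. Since $Q_0 = 0$ and $P_n = 0$, this means $A_\bullet$ is isomorphic, \emph{as a complex}, to a direct sum $\bigoplus_k E_k$ of the elementary two-term complexes $E_k = (P_k \xrightarrow{\sim} Q_{k+1})$ sitting in degrees $k, k+1$; equivalently, with respect to $A_j = Q_j \oplus P_j$ the map $f_j$ is zero on $Q_{j-1}$ and restricts to an isomorphism $P_{j-1} \xrightarrow{\sim} Q_j$.

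Finally, given any Postnikov system $\{B_j, g_j, h_j\}$ computing a convolution, I would prove by induction on $j$ that $B_j \cong P_j$, with $h_j \colon A_j \to B_j$ corresponding to the projection $A_j = Q_j \oplus P_j \twoheadrightarrow P_j$. The base case $B_0 = A_0 = P_0$ is clear. In the inductive step, $h_{j-1}$ is, up to the isomorphism $B_{j-1}\cong P_{j-1}$, a split epimorphism, so the constraint $g_j h_{j-1} = f_j$ forces $g_j$ to equal the split monomorphism $B_{j-1} \cong P_{j-1} \xrightarrow{\sim} Q_j \hookrightarrow A_j$. Since the cone of a split monomorphism $X \hookrightarrow X \oplus Y$ is its complement $Y$ (the relevant split triangle being a direct sum of trivial triangles, hence distinguished), the triangle $B_{j-1} \xrightarrow{g_j} A_j \xrightarrow{h_j} B_j$ gives $B_j \cong P_j$ with $h_j$ the projection, completing the induction. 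In particular $\Conv(A_\bullet) = B_n \cong P_n = 0$.

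I expect the main obstacle to be precisely this last step. Because convolutions are in general unique only under the $\Hom$-vanishing hypotheses of Proposition \ref{th:uniquecone}, which are not available here, one cannot simply exhibit a single convolution equal to $\bigoplus_k \Cone(E_k) \cong 0$ and invoke uniqueness. The Postnikov induction circumvents this by showing that at each stage the connecting morphism $g_j$ is rigidly determined — via the split epimorphism $h_{j-1}$ — to be a split monomorphism onto the summand $Q_j$, so that every convolution collapses to the zero object. The role of the Krull--Schmidt hypothesis is exactly to guarantee the idempotents $p_k$ split, which is what makes all of this block structure available.
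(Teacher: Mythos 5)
Your proof is correct, but it follows a genuinely different route from the paper's. The paper argues by induction on the length of the complex: the null-homotopy gives $g_1 \circ f_1 = \id_{A_0}$, Krull--Schmidt splits $A_1 \cong A_0 \oplus A_1'$, and then any convolution of $A_\bullet$ is identified with a convolution of the shorter complex $\Cone(f_1) \rightarrow B_\bullet$; the cancellation Lemma \ref{lem:cancel} (Gaussian elimination) is then invoked to show this shorter complex is again null-homotopic, so the induction closes. You instead split the whole complex in one pass: the null-homotopy makes $p_k = s_{k+1}f_{k+1}$ idempotent, Krull--Schmidt (hence idempotent completeness) yields $A_k \cong P_k \oplus Q_k$, and the complex becomes a direct sum of elementary pieces $P_k \xrightarrow{\sim} Q_{k+1}$; you then run your induction along the Postnikov tower itself, showing $B_j \cong P_j$ at every stage. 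Both proofs use the Krull--Schmidt hypothesis for exactly the same purpose (splitting idempotents), but your argument dispenses with Lemma \ref{lem:cancel} and, more importantly, confronts the non-uniqueness of convolutions head-on: since $h_{j-1}$ is a split epimorphism, the relation $g_j h_{j-1} = f_j$ rigidly forces $g_j$ to be a split monomorphism onto $Q_j$, so \emph{every} Postnikov system collapses. The paper handles this same point more implicitly, via the identification $\Conv(A_\bullet) \cong \Conv(\Cone(f_1',f_1'') \xrightarrow{u} B_\bullet)$, which quietly uses that a Postnikov system for $A_\bullet$ induces one for the cone complex. Your version costs a little more bookkeeping up front (the block decomposition and the standard facts that split triangles are distinguished and cones are unique up to isomorphism of triangles), while the paper's is shorter because it can reuse its already-proven cancellation lemma; in exchange, yours makes fully explicit why no choice of convolution can escape being zero.
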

\begin{proof}
The proof is by induction on the length of the complex. The base case is trivial. 

Now, if $A_\bullet = A_0 \xrightarrow{f_1} A_1 \xrightarrow{} B_\bullet$ is homotopic to zero then there exists a map $g_1: A_1 \rightarrow A_0$ such that $g_1 \circ f_1 = \id_{A_0}$. Since $\D$ is Krull-Schmidt this means that $A_1 \cong A_0 \oplus A_1'$ and we can rewrite $A_\bullet$ as 
\begin{equation*}
A_0 \xrightarrow{(f_1',f_1'')} A_0 \oplus A_1' \xrightarrow{h} B_\bullet
\end{equation*}
where $f_1'$ is an isomorphism. Now, if we take the first cone we obtain a commutative diagram 
\begin{eqnarray*}
\xymatrix{
A_0 \ar[d] \ar[rr]^{(f_1',f_1'')} & &  \ar[d]^{(\alpha,\beta)} A_0 \oplus A_1' \ar[rr]^{h} & & B_\bullet \ar[d]^\id \\
0 \ar[rr] & & \Cone(f_1',f_1'') \ar[rr]^u & & B_\bullet
}
\end{eqnarray*}
where $u$ is some map. Note that $\Conv(A_\bullet) \cong \Conv(\Cone(f_1',f_1'') \xrightarrow{u} B_\bullet)$ so, by induction, it suffices to show that $\Cone(f_1',f_1'') \xrightarrow{u} B_\bullet$ is homotopic to zero. 

Since $f_1'$ is an isomorphism this means $\beta$ is an isomorphism. Thus, precomposing with $\beta$ we get 
\begin{equation}\label{eq:5}
[\Cone(f_1',f_1'') \xrightarrow{u} B_\bullet] \cong [A_1' \xrightarrow{u \circ \beta} B_\bullet].
\end{equation}
Note that by commutativity of the square $u \circ \beta = h$. 

On the other hand, using the cancellation lemma \ref{lem:cancel}
$$[A_0 \xrightarrow{(f_1',f_1'')} A_0 \oplus A_1' \xrightarrow{h} B_\bullet] \cong [0 \rightarrow A_1' \xrightarrow{h} B_\bullet].$$
Thus, combining this with (\ref{eq:5}) we get that $\Cone(f_1',f_1'') \xrightarrow{u} B_\bullet$ is homotopic to zero. 
\end{proof}

\subsection{Proof of Theorem \ref{thm:geom}}

First we show that $\Sigma_i \1_n$ has a unique convolution. By Proposition \ref{th:uniquecone} it suffices to check that for any partitions $\l,\mu,\nu$ with $|\l|-|\mu| \ge 2$ and $|\l|-|\nu| \ge 3$ we have
\begin{equation}\label{eq:6}
\Hom(\P_i^{(\l)} \Q_i^{(\l^t)}, \P_i^{(\mu)} \Q_i^{(\mu^t)} \la 1 \ra) = 0 = \Hom(\P_i^{(\l)} \Q_i^{(\l^t)}, \P_i^{(\nu)} \Q_i^{(\nu^t)} \la 2 \ra)
\end{equation}
where $\la 1 \ra = [1]$. In the 2-category $\H$ this is clear because any map in the first (resp. second) $\Hom$ space above, must contain two (resp. three) adjunction maps and must therefore have degree at least two (resp. three). On the other hand, this can also be seen by using adjunction and the commutation relations in $\H$ to reduce the statements above to the statement that $\End(\1_0,\1_0 \la s \ra) = 0$ for $s < 0$. Since this is also true in our 2-representation consisting of coherent sheaves on $X_\Gamma^{[n]}$ the vanishing of (\ref{eq:6}) holds there too. 

It remains to show that these unique convolutions $\Conv(\Sigma_i \1_n)$ satisfy the affine braid relations. This follows from the affine braid relations satisfied in $\Kom(\H)$. We illustrate this by proving that
$$\Conv(\Sigma_i \1_n) \circ \Conv(\Sigma_i^{-1} \1_n) \cong \1_n.$$
First, we have that $\Sigma_i \1_n \circ \Sigma_i^{-1} \1_n \cong \1_n$ in the homotopy category. This means that we have a map $\Sigma_i \1_n \circ \Sigma_i^{-1} \1_n \rightarrow \1_n$ whose cone is homotopic to zero. By Proposition \ref{prop:complexzero} we know that any convolution of this cone is zero and hence there is an isomorphism 
$$\Conv(\Sigma_i \1_n \circ \Sigma_i^{-1} \1_n) \xrightarrow{\sim} \1_n$$
for any choice of convolution (we do not know that it has a unique convolution). On the other hand, $\Conv(\Sigma_i \1_n) \circ \Conv(\Sigma_i^{-1} \1_n)$ is {\it some} convolution of $\Sigma_i \1_n \circ \Sigma_i^{-1} \1_n$. Thus 
$$\Conv(\Sigma_i \1_n) \circ \Conv(\Sigma_i^{-1} \1_n) \cong \Conv(\Sigma_i \1_n \circ \Sigma_i^{-1} \1_n) \cong \1_n$$ 
and we are done. 

\begin{Remark}\label{rem:conv}
Although Theorem \ref{thm:geom} involves the triangulated category of coherent sheaves on a surface, the same proof works to show that the conclusions in that theorem hold for any (graded) triangulated category where 
$$\Hom(\1_n,\1_n \la \ell \ra) = 
\begin{cases} 
0 & \text{ if } \ell < 0 \\
\C & \text{ if } \ell = 0 \\
\text{finite dimensional } & \text{ if } \ell > 0.
\end{cases}$$
\end{Remark}

\subsection{A larger group}

As mentioned in the introduction, $D_{\C^\times}(X_\Gamma)$ carries an action of the affine braid group defined using Seidel-Thomas twists \cite{ST}. The affine braid group action constructed above coincides with its lift from $D_{\C^\times}(X_\Gamma)$ to $D_{\C^\times}(X_\Gamma^{[n]})$ using the results in \cite{P}. 

On the other hand, there is actually a larger group acting on $D_{\C^\times}(X_\Gamma^{[n]})$. It is generated by certain complexes very similar to our $\Sigma_i \1_n$. These complexes are briefly discussed in section \ref{sec:vertexops} (equation (\ref{eq:eg2}) is an example of such a complex). However, the autoequivalence such a complex generates is not the lift of any automorphism of $D_{\C^\times}(X_\Gamma)$.

\section{A braid group action on $\Kom(\H)$}\label{sec:braidH}
In this section we define an abstract action of $ \Br(D) $ on $\Kom(\H')$ where $ \H'$ is the full subcategory of $ \H$ generated by the $ \P_i $.  The quotient of $\H' $ by the ideal generated by $\1_n$ for $n < 0$ may be thought of as another categorification of Fock space.  

Recall that the 2-category $\H$ contains generating 2-morphisms $X_j^k \colon \P_j \rightarrow \P_k \la 1 \ra$ for $ \la j,k \ra=-1$  and $ T_{jk} \colon \P_j \P_k \rightarrow \P_k \P_j $ for any $j,k \in I$. In \cite{CL1} we encode these maps diagrammatically as a dot and a crossing respectively. To define a braid group action on $\H'$ we need to explain how the generators $\s_i^{\pm 1}$ act on 1-morphisms $\P_j$ and $\Q_j$ as well as on 2-morphisms $X_j^k$ and $T_{jk}$. We will then extend this action monoidally -- for example, $\s_i(\P_j\P_j) = \s_i(\P_j) \s_i(\P_j)$. 

In the appendix we will prove the following result. 

\begin{Theorem}\label{thm:main2}
The actions of $\s_i^{\pm 1}$ defined in sections \ref{sec:action1}--\ref{sec:action5} induce 2-endofunctors of the 2-category $\Kom(\H')$ which satisfy the braid relations of $\Br(D)$. 
\end{Theorem}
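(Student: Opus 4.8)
The plan is to treat Theorem~\ref{thm:main2} as two logically separate assertions: first, that each assignment $\s_i^{\pm 1}$ extends from the generating $1$- and $2$-morphisms to a genuine $2$-endofunctor of $\Kom(\H')$; and second, that the resulting endofunctors satisfy the defining relations of $\Br(D)$. Since $\H'$ is presented by generators and relations, with $1$-morphisms built from the $\P_j$ (and $\Q_j$) and $2$-morphisms generated by the dots $X_j^k$ and crossings $T_{jk}$, defining a monoidal $2$-endofunctor reduces to (i) checking that $\s_i(X_j^k)$ is a chain map of the correct internal degree from $\s_i(\P_j)$ to $\s_i(\P_k)\la 1\ra$, and similarly for $\s_i(T_{jk})$, and (ii) verifying that every defining relation of $\H'$ is sent to a relation holding up to homotopy in $\Kom(\H')$. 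Because we extend the assignment monoidally, compatibility with horizontal and vertical composition is automatic, so the content of well-definedness is entirely in the relation check. Working in the homotopy category is essential: equalities of $2$-morphisms in $\H'$ need only be reproduced up to chain homotopy, which is exactly what gives room to match the two sides of each relation.

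For the well-definedness in (ii), I would proceed relation by relation, organized by how many of the vertices involved equal or are adjacent to $i$. The relations supported away from $i$ are essentially untouched by $\s_i$ and are immediate. The genuine content is in the relations that mix vertex $i$ with a neighbor $j$ (the case $\la i,j\ra=-1$): the naturality of the crossing $T_{ij}$, the interaction of $T$ with the dots $X_i^j$, and the adjunction (zig-zag) identities relating $\P_i$ and $\Q_i$. For each such relation I would write out the image chain maps explicitly and exhibit the homotopy witnessing the required equality; the finiteness and near one-dimensionality of the relevant $\Hom$ spaces, as in Lemmas~\ref{lem:2} and~\ref{lem:3}, typically pin each side down up to a scalar, after which the homotopy is determined by matching scalars.

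For the braid relations themselves I would first establish invertibility, $\s_i\s_i^{-1}\cong \id\cong \s_i^{-1}\s_i$ as $2$-endofunctors. On $1$-morphisms this follows by applying the Cancellation Lemma~\ref{lem:cancel} to the composite complexes $\s_i\s_i^{-1}(\P_j)$, collapsing the acyclic part, and then checking that the resulting isomorphism is natural with respect to $X$ and $T$. The far-commutation relation $\s_i\s_j\cong\s_j\s_i$ for $\la i,j\ra=0$ is straightforward, since the two actions operate on disjoint data. The essential case is $\s_i\s_j\s_i\cong \s_j\s_i\s_j$ for $\la i,j\ra=-1$: here I would construct, for each generating object $\P_k$ (and $\Q_k$), an explicit homotopy equivalence between the two composite complexes, guided by the decategorified identity already recorded in Lemma~\ref{lemma1}, and then verify that this family of equivalences is natural with respect to the generating $2$-morphisms.

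I expect the main obstacle to be precisely this last braid relation for adjacent $i,j$. Both composites $\s_i\s_j\s_i(\P_k)$ and $\s_j\s_i\s_j(\P_k)$ are sizeable, and constructing the connecting homotopy equivalence \emph{together with} a proof that it intertwines all crossings and dots is where the real work lies. My strategy would be to reduce the naturality check to a finite list of generating $2$-morphisms using monoidality, and to fix all undetermined scalars by imposing compatibility across overlapping relations, invoking uniqueness statements of the type in Lemma~\ref{lem:3} to guarantee at most one consistent choice. Since these verifications are lengthy and largely diagrammatic, I would defer the detailed computations to the appendix, as the paper does.
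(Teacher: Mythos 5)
Your proposal takes essentially the same route as the paper: well-definedness is exactly Proposition \ref{prop:welldefined}, checked relation by relation with the adjacent-vertex crossing relations as the hard cases, and the braid relations are established exactly as in the paper's appendix by fixing explicit homotopy equivalences on generating 1-morphisms ($\s_i^{-1}\s_i(\P_k) \simeq \P_k$, $\s_i\s_j\s_i(\P_k) \simeq \s_j\s_i\s_j(\P_k)$) and then verifying naturality of these equivalences against the generating 2-morphisms $X$ and $T$, with the lengthy case analysis deferred. Two inessential inaccuracies: $\H'$ is generated by the $\P_i$ alone, so there are no $\Q_j$'s and no adjunction (zig-zag) relations to check; and Lemmas \ref{lem:2} and \ref{lem:3} concern single-color compositions $\P_i^{(\l)}\Q_i^{(\mu)}$ in integrable 2-representations, so they are not the uniqueness statements that pin down scalars in $\Kom(\H')$ --- the paper instead fixes all maps by explicit computation.
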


One can extend the results of this section to describe an action of $\Br(D)$ on the entire category $\Kom(\H)$, rather than on just the upper half $\Kom(\H')$. However, this would require checking even more relations and our interest in this paper is to understand braid group actions arising from integrable 2-representations of $\H$. For this purpose, Theorem \ref{thm:main2} is sufficient.

\subsection{The action of $\s_i^{\pm 1}$ on 1-morphisms}\label{sec:action1}

As usual, we will use $[k]$ to denote a cohomological shift to the left by $k$ and $\la k \ra$ to denote the internal grading shift of $\H$. 

We define 
$$\s_i (\P_j) := 
\begin{cases} 
\P_i \la -2 \ra \oplus \P_i \xrightarrow{(X_i^{i} \ \ 1_i)} \P_i & \text{ if } i=j \in I \\
\P_i \la -1 \ra \xrightarrow{X_i^j} \P_j & \text{ if } \la i,j \ra = -1 \\  
\P_j & \text{ if } \la i,j \ra = 0
\end{cases}$$
where the right hand terms are all in homological degree zero. Likewise, we define
$$
\s_i^{-1}(\P_j) := 
\begin{cases}
\P_i \xrightarrow{(1_i \ \ X_i^{i})} \P_i \oplus \P_i \la 2 \ra & \text{ if } i=j \in I \\
\P_j \xrightarrow{X_j^i} \P_i \la 1 \ra & \text{ if } \la i,j \ra = -1 \\
\P_j & \text{ if } \la i,j \ra = 0
\end{cases}$$

where this time the left hand terms are all in degree zero.

\subsection{The action of $\s_i$ on $X$'s}\label{sec:action2}

Suppose $\la i,j \ra = -1$. We define 
\begin{equation*}
\xymatrix{
\s_i(\P_i \la -1 \ra) \ar[d]_{\s_i(X_i^{j})} & = & \P_i \la -3 \ra \oplus \P_i \la -1 \ra \ar[rr]^{\hspace{.2in} (X_i^i \hspace{.2cm} 1) } \ar[d]_{(0 \hspace{.2cm} 1_i)} & & \P_i \la -1 \ra \ar[d]^{X_i^j}  \\
\s_i(\P_{j}) & = & \P_i \la -1 \ra \ar[rr]^{X_i^{j}} & & \P_{j} 
}
\end{equation*}
\begin{equation*}
\xymatrix{
\s_i(\P_{j} \la -1 \ra) \ar[d]_{\s_i(X_{j}^i)} & = & \P_i \la -2  \ra \ar[rr]^{X_i^{j}} \ar[d]_{(\epsilon_{ij} 1_i \hspace{.2cm} 0)} & & \P_{j} \la -1 \ra \ar[d]^{X_j^i} \\
 \s_i(\P_i) & = & \P_i \la -2 \ra \oplus \P_i  \ar[rr]^{\hspace{.2in} ( X_i^i   1_i)} & & \P_i }
\end{equation*}
where the rightmost columns are both in cohomological degree zero.

Next, suppose further that $j \ne k$ and $\la j,k \ra = -1$, $\la i,k \ra = 0$. Then define 
\begin{equation*}
\xymatrix{
\s_i(\P_{j} \la -1 \ra) \ar[d]_{\s_i(X_{j}^{k})} & = & \P_i \la -2 \ra \ar[r]^{X_i^{j}} \ar[d]^{}& \P_{j} \la -1 \ra \ar[d]^{X_{j}^{k}}   \\
\s_i(\P_{k}) & = & 0 \ar[r]^{} & \P_{k} 
}
\end{equation*}
\begin{equation*}
\xymatrix{
\s_i(\P_{k}) \ar[d]_{ \s_i(X_{k}^{j})} & = & 0 \ar[r]^{} \ar[d]^{} & \P_{k} \la -1 \ra \ar[d]^{X_{k}^{j}}   \\
\s_i(\P_{j}) & = & \P_i \la -1 \ra \ar[r]^{X_i^{j}} & \P_{j} 
}
\end{equation*}
where again the rightmost columns are in cohomological degree zero.

An unusual situation is when $i,j,k$ form a triangle, meaning $\la i,j \ra = \la j,k \ra = \la i,k \ra = -1$. In this case we define
\begin{equation*}
\xymatrix{
\s_i(\P_j) \ar[d]_{\s_i(X_j^k)} & = & \P_i \la -1 \ra \ar[r]^{X_i^j} \ar[d]^{0} & \P_j \ar[d]^{X_j^k} \\
\s_i(\P_k) & = & \P_i \ar[r]^{X_i^k} & \P_k \la 1 \ra
}
\end{equation*}
where the rightmost column is in cohomological degree zero. 

Finally, if $\la i,j \ra = \la i,k \ra = 0$ then $\s_i(X_j^k) = X_j^k$.

\subsection{The action of $\s_i^{-1}$ on $X$'s}\label{sec:action3}
Suppose $ \la i, j \ra = -1 $.  We define
\begin{equation*}
\xymatrix{
\s_i^{-1}(\P_i \la -1 \ra) \ar[d]^{\s_i^{-1}(X_i^j)} & = &  \P_i \la -1 \ra \ar[d]^{X_i^j} \ar[rr]^{(1_i \hspace{.2cm} X_i^i) \hspace{.6cm}}  & & \P_i \la -1 \ra \oplus \P_i \la 1 \ra \ar[d]^{(0 \hspace{.2cm} \epsilon_{ij} 1_i)} \\
\s_i^{-1}(\P_j) & = &  \P_{j} \ar[rr]^{X_j^i} & & \P_i \la 1 \ra
}
\end{equation*}

\begin{equation*}
\xymatrix{
\s_i^{-1}(\P_j \la -1 \ra) \ar[d]^{\s_i^{-1}(X_j^i)} & = &  \P_j \la -1 \ra \ar[d]^{X_j^i} \ar[rr]^{X_j^i} && \P_i \ar[d]^{(1_i \hspace{.2cm} 0)} \\
\s_i^{-1}(\P_i) & = &   \P_{i} \ar[rr]^{(1_i \hspace{.2cm} X_i^i)} && \P_i \oplus \P_i \la 2 \ra
}
\end{equation*}
where the leftmost columns are in cohomological degree zero.

In the case when $ i,j,k $ form a triangle so that $ \la i, j \ra = \la j, k \ra = \la i, k \ra = -1 $ we define
\begin{equation*}
\xymatrix{
\s_i^{-1}(\P_j \la -1 \ra) \ar[d]^{\s_i^{-1}(X_j^k)} & = &  \P_j \la -1 \ra \ar[r]^{X_j^i} \ar[d]^{X_j^k} & \P_i \ar[d]^{0} \\
\s_i^{-1}(\P_k) & = &  \P_k  \ar[r]^{X_k^i} & \P_i \la 1 \ra
}
\end{equation*}
where the leftmost column is in cohomological degree zero.

Next, suppose $ \la j, k \ra = -1, \la i, j \ra = -1, \la i, k \ra = 0 $.  Then define

\begin{equation*}
\xymatrix{
\s_i^{-1}(\P_j \la -1 \ra) \ar[d]^{\s_i^{-1}(X_j^k)} & = &   \P_{j} \la -1 \ra \ar[d]^{X_{j}^{k}} \ar[r]^{X_{j}^i} & \P_i \ar[d]\\
\s_i^{-1}(\P_k) & = &   \P_{k} \ar[r]^{} & 0
}.
\end{equation*}

\begin{equation*}
\xymatrix{
\s_i^{-1}(\P_k \la -1 \ra) \ar[d]^{\s_i^{-1}(X_k^j)} & = &  \P_{k} \la -1 \ra  \ar[d]^{X_{k}^{j}} \ar[r]^{} & 0 \ar[d]  \\
\s_i^{-1}(\P_j) & = &  \P_{j} \ar[r]^{X_{j}^i} & \P_i \la 1 \ra
}.
\end{equation*}
where the leftmost columns are in cohomological degree zero.

Finally if $ \la i, j \ra = \la i, k \ra = 0 $ then define $ \s_i^{-1}(X_j^k) = X_j^k $.

\subsection{The action of $\s_i$ on $T$'s}\label{sec:action4}

We define $ \s_i(T_{ii}) $ as the map of complexes
\begin{equation*}
\xymatrix{
\P_i \P_i \la -4,-2,-2,0 \ra \ar[r]^{A} \ar[dd]_{- \begin{pmatrix} T_{ii} & 0 & 0 & 0 \\ 0 & 0 & T_{ii} & 0 \\ 0 & T_{ii} & 0 & 0 \\ 0 & 0 & 0 & T_{ii} \end{pmatrix}}   & \P_i \P_i \la -2,0,-2,0 \ra \ar[rr]^{B} \ar[dd]^{\begin{pmatrix} 0 & 0 & T_{ii} & 0 \\ 0 & 0 & 0 & T_{ii} \\ T_{ii} & 0 & 0 & 0 \\ 0 & T_{ii} & 0 & 0 \end{pmatrix}}&&  \P_i \P_i  \ar[dd]^{T_{ii}}  \\
\\
\P_i \P_i \la -4,-2,-2,0 \ra \ar[r]^{A} & \P_i \P_i \la -2,0,-2,0 \ra \ar[rr]^{B} &&  \P_i \P_i 
}
\end{equation*}
where, for instance, $\P_i \P_i \la -4,-2,-2,0 \ra$ is shorthand for $\P_i \P_i \la -4 \ra \oplus \P_i \P_i \la -2 \ra \oplus \P_i \P_i \la -2 \ra \oplus \P_i \P_i$ and the rightmost column is in homological degree zero. The maps $A$ and $B$ are given by 
\begin{eqnarray*}
A &=&
\begin{pmatrix}
-1_i X_i^i & -1_i 1_i & 0 & 0 \\
0 & 0 & -1_i X_i^i & -1_i 1_i \\
X_i^i 1_i & 0 & 1_i 1_i & 0 \\
0 & -X_i^i 1_i & 0 & 1_i 1_i
\end{pmatrix} \\
B &=& 
\begin{pmatrix}
X_i^i 1_i & 1_i 1_i & 1_i X_i^i & 1_i 1_i
\end{pmatrix}.
\end{eqnarray*}

Now suppose $\la i,j \ra = -1$. Then we define $\s_i(T_{ij})$ by
\begin{equation*}
\xymatrix{
\s_i(\P_i \P_j) \ar[dd]^{\s_i(T_{ij})} & = & \P_i \P_i \la -3 \ra \oplus \P_i \P_i \la -1 \ra \ar[r]^{A \hspace{.3in}} \ar[dd]^{\begin{pmatrix} -T_{ii} & 0 \\ 0 & -T_{ii} \end{pmatrix}}   & \P_i \P_j \la -2 \ra \oplus \P_i \P_j \oplus \P_i \P_i \la -1 \ra  \ar[rr]^{\hspace{.5in} B} 
\ar[dd]^{\begin{pmatrix} T_{ij} & 0 & 0 \\ 0 & T_{ij} & 0 \\ 0 & 0 & T_{ij} \end{pmatrix}}&&  \P_i \P_j  \ar[dd]^{T_{ij}}  \\
\\
\s_i(\P_j \P_i) & = & \P_i \P_i \la -3 \ra \oplus \P_i \P_i \la -1 \ra \ar[r]^{C \hspace{.3in}} & \P_j \P_i \la -2 \ra \oplus \P_j \P_i \oplus \P_i \P_i \la -1 \ra \ar[rr]^{\hspace{.5in} D} &&  \P_j \P_i 
}
\end{equation*}
where the rightmost column is in cohomological degree zero and 
\begin{align*}
A &=
\begin{pmatrix}
-1_i X_i^k & 0 \\
0 & -1_i X_i^k \\
X_i^i 1_i & 1_i 1_i
\end{pmatrix} 
\ \ \
C = \begin{pmatrix}
X_i^k 1_i & 0 \\
0 & X_i^k 1_i \\
-1_i X_i^i & -1_i 1_i
\end{pmatrix} \\
B &= 
\begin{pmatrix}
X_i^i 1_k & 1_i 1_k & 1_i X_i^k
\end{pmatrix}
\ \ \ 
D = 
\begin{pmatrix}
1_k X_i^i & 1_k 1_i & X_i^k 1_i
\end{pmatrix}.
\end{align*}

Likewise, if $\la i,j \ra = -1$ then we define $\s_i(T_{ji})$ by 
\begin{equation*}
\xymatrix{
\s_i(\P_j \P_i) \ar[dd]^{\s_i(T_{ji})} & = & \P_i \P_i \la -3 \ra \oplus \P_i \P_i \la -1 \ra \ar[r]^{A \hspace{.3in}} \ar[dd]^{\begin{pmatrix} -T_{ii} & 0 \\ 0 & -T_{ii} \end{pmatrix}}   & \P_j \P_i \la -2 \ra \oplus \P_j \P_i \oplus \P_i \P_i \la -1 \ra  \ar[rr]^{\hspace{.5in} B} 
\ar[dd]^{\begin{pmatrix} T_{ji} & 0 & 0 \\ 0 & T_{ji} & 0 \\ 0 & 0 & T_{ii} \end{pmatrix}}&&  \P_j \P_i  \ar[dd]^{T_{ji}}  \\
\\
\s_i(\P_i \P_j) & = & \P_i \P_i \la -3 \ra \oplus \P_i \P_i \la -1 \ra \ar[r]^{C \hspace{.3in}} & \P_i \P_j \la -2 \ra \oplus \P_i \P_j \oplus \P_i \P_i \la -1 \ra \ar[rr]^{\hspace{.5in} D} &&  \P_i \P_j 
}
\end{equation*}
where the rightmost column is in cohomological degree zero and 
\begin{align*}
A &=
\begin{pmatrix}
X_i^j 1_i & 0 \\
0 & X_i^j 1_i \\
-1_i X_i^i & -1_i 1_i
\end{pmatrix}
\ \ \ \ \
C=
\begin{pmatrix}
-1_i X_i^j & 0 \\
0 & -1_i X_i^j \\
X_i^i 1_i & 1_i 1_i
\end{pmatrix} \\
B &= \begin{pmatrix} 1_j X_i^i & 1_j 1_i & X_i^j 1_i \end{pmatrix} \ \ \ \
D = \begin{pmatrix} X_i^i 1_j & 1_i 1_j & 1_i X_i^j \end{pmatrix}.
\end{align*}

On the other hand, if $\la i,j \ra = 0$ then $\s_i(T_{ij})$ is given by 
\begin{equation*}
\xymatrix{
\s_i(\P_i \P_j) \ar[dd]_{\s_i(T_{ij})} & = & \P_i \P_j \la -2 \ra \oplus \P_i \P_j \ar[rr]^{\hspace{.3in} \begin{pmatrix} X_i^i 1_j & 1_i 1_j \end{pmatrix}} \ar[dd]^{\begin{pmatrix} T_{ij} & 0 \\ 0 &T_{ij} \end{pmatrix}} && \P_i \P_j \ar[dd]^{T_{ij}} \\
\\
\s_i(\P_j \P_i) & = & \P_j \P_i \la -2 \ra \oplus \P_j \P_i \ar[rr]^{\hspace{.3in} \begin{pmatrix} 1_j X_i^i & 1_j 1_i \end{pmatrix}} && \P_j \P_i
}
\end{equation*}
where the rightmost column is in cohomological degree zero. 

Likewise, if $\la i,j \ra = 0$ then $\s_i(T_{ji})$ is given by
\begin{equation*}
\xymatrix{
\s_i(\P_j \P_i) \ar[dd]^{\s_i(T_{ji})} & = & \P_j \P_i \la -2 \ra \oplus \P_j \P_i \ar[rr]^{\hspace{.3in} \begin{pmatrix} 1_j X_i^i & 1_j 1_i \end{pmatrix}} \ar[dd]^{\begin{pmatrix} T_{ji} & 0 \\ 0 &T_{ji} \end{pmatrix}} && \P_j \P_i \ar[dd]^{T_{ji}}\\
\\
\s_i(\P_i \P_j) & = & \P_i \P_j \la -2 \ra \oplus \P_i \P_j \ar[rr]^{\hspace{.3in} \begin{pmatrix} X_i^i 1_j & 1_i 1_j \end{pmatrix}} && \P_i \P_j
}
\end{equation*}
where the rightmost column is in cohomological degree zero. 

Next, suppose $\la i,j \ra = -1 = \la i,k \ra$. Define $\s_i(T_{jk})$ by 
\begin{equation*}
\xymatrix{
\s_i(\P_j \P_k) \ar[dd]^{\s_i(T_{jk})} & = & \P_i \P_i \la -2 \ra \ar[rr]^{\begin{pmatrix} 1_i X_i^k \\ X_i^j 1_i \end{pmatrix} \hspace{.5in}} \ar[dd]^{T_{ii}} && \P_i \P_k \la -1 \ra \oplus \P_j \P_i \la -1 \ra \ar[rrr]^{\hspace{.5in} \begin{pmatrix} X_i^j 1_k & 1_j X_i^k \end{pmatrix}} \ar[dd]^{\begin{pmatrix} 0 & T_{ji} \\ T_{ik} & 0 \end{pmatrix}}  &&& \P_j \P_k \ar[dd]^{T_{jk}}  \\
\\
\s_i^{}(\P_j \P_k) & = &\P_i \P_i \la -2 \ra \ar[rr]^{\begin{pmatrix} 1_i X_i^j \\ X_i^k 1_i  \end{pmatrix} \hspace{.5in}} && \P_i \P_j \la -1 \ra \oplus \P_k \P_i \la -1 \ra \ar[rrr]^{\hspace{.5in} \begin{pmatrix} X_i^k 1_j  & 1_k X_i^j \end{pmatrix}} &&& \P_k \P_j
}
\end{equation*}
where the rightmost column is in cohomological degree zero. 

On the other hand, if $\la i,j \ra = -1$ and $\la i,k \ra = 0$ then $\s_i(T_{jk})$ is defined by 
\begin{equation*}
\xymatrix{
\s_i(\P_j \P_k) \ar[d]_{\s_i(T_{jk})} & = & \P_i \P_k \la -1 \ra \ar[r]^{X_i^j 1_k} \ar[d]^{T_{ik}} & \P_j \P_k \ar[d]^{T_{jk}} \\
\s_i(\P_k \P_j) & = & \P_k \P_i \la -1 \ra \ar[r]^{1_k X_i^j} & \P_k \P_j
}
\end{equation*}
while $\s_i(T_{kj})$ is defined by 
\begin{equation*}
\xymatrix{
\s_i(\P_k \P_j) \ar[d]_{\s_i(T_{kj})} & = & \P_k \P_i \la -1 \ra \ar[r]^{1_k X_i^j} \ar[d]^{T_{ki}} & \P_k \P_j \ar[d]^{T_{kj}} \\
\s_i(\P_j \P_k) & = & \P_i \P_k \la -1 \ra \ar[r]^{X_i^j 1_k} & \P_j \P_k
}
\end{equation*}
where both the rightmost columns are in cohomological degree zero.

Finally, if $\la i, j \ra = 0 = \la i, k \ra$ then $\s_i(T_{jk}) = T_{jk}$.  

\subsection{The action of $ \s_i^{-1} $ on $T$'s}\label{sec:action5}

We define $ \s_i^{-1}(T_{ii}) $ as a map of complexes:
\begin{equation*}
\xymatrix{
\P_i \P_i \ar[r]^{\hspace{-.8in} A} \ar[dd]^{T_{ii}} & \P_i \P_i \oplus \P_i \P_i \la 2 \ra \oplus \P_i \P_i \oplus \P_i \P_i \la 2 \ra \ar[rr]^{\hspace{.5in} B} \ar[dd]^{\begin{pmatrix} 0 & 0 & T_{ii} & 0 \\ 0 & 0 & 0 & T_{ii} \\ T_{ii} & 0 & 0 & 0 \\ 0 & T_{ii} & 0 & 0 \end{pmatrix}} && \P_i \P_i \la 0, 2, 2, 4 \ra \ar[dd]^{-\begin{pmatrix} T_{ii} & 0 & 0 & 0 \\ 0 & 0 & T_{ii} & 0 \\ 0 & T_{ii} & 0 & 0 \\ 0 & 0 & 0 & T_{ii} \end{pmatrix}}\\
\\
\P_i \P_i \ar[r]^{\hspace{-.8in} A} & \P_i \P_i \oplus \P_i \P_i \la 2 \ra \oplus \P_i \P_i \oplus \P_i \P_i \la 2 \ra \ar[rr]^{\hspace{.5in} B} && \P_i \P_i \la 0, 2, 2, 4 \ra
}
\end{equation*}
where the leftmost column is in cohomological degree zero.  The maps $ A $ and $ B $ are given by

$$A = \begin{pmatrix}
1_i 1_i \\
1_i X_i^i  \\
1_i 1_i \\
X_i^i 1_i
\end{pmatrix}
\ \ \ \text{ and }  \ \ \ 
B =
\begin{pmatrix}
1_i 1_i & 0 & -1_i 1_i & 0 \\
0 & 1_i 1_i & -1_i X_i^i & 0 \\
X_i^i 1_i & 0 & 0 & -1_i 1_i  \\
0 & X_i^i 1_i & 0 & -1_i X_i^i
\end{pmatrix}.$$

Now suppose $ \la i, j \ra = -1 $.  Then we
define $ \s_i^{-1}(T_{ij}) $ as a map of complexes by
\begin{equation*}
\xymatrix{
\s_i^{-1}(\P_i \P_j) \ar[dd]^{\s_i^{-1}(T_{ij})} &=&  \P_i \P_j \ar[r]^{A \hspace{.6in}} \ar[dd]^{T_{ij}}   & \P_i \P_j \oplus \P_i \P_j \la 2 \ra \oplus \P_i \P_i \la 1 \ra  \ar[rr]^{B} 
\ar[dd]^{\begin{pmatrix} T_{ij} & 0 & 0 \\ 0 & T_{ij} & 0 \\ 0 & 0 & T_{ii} \end{pmatrix}}&&    \P_i \P_i \la 1 \ra \oplus \P_i \P_i \la 3 \ra \ar[dd]^{\begin{pmatrix} -T_{ii} & 0 \\ 0 & -T_{ii} \end{pmatrix}}  \\
\\
\s_i^{-1}(\P_j \P_i) &=&  \P_j \P_i  \ar[r]^{C \hspace{.6in}} & \P_j \P_i \oplus \P_j \P_i \la 2 \ra \oplus \P_i \P_i \la 1 \ra \ar[rr]^{D} &&  \P_i \P_i \la 1 \ra \oplus \P_i \P_i \la 3 \ra
}
\end{equation*}
where the leftmost column is in cohomological degree zero and 

\begin{align*}
A &=
\begin{pmatrix}
1_i 1_k \\
X_i^i 1_k \\
1_i X_k^i
\end{pmatrix}
\ \ \
B = 
\begin{pmatrix}
-1_i X_k^i & 0 & 1_i 1_i \\
0 & -1_i X_k^i & X_i^i 1_i
\end{pmatrix}
\\
C&=
\begin{pmatrix}
1_k 1_i \\
1_k X_i^i \\
X_k^i 1_i
\end{pmatrix}
\ \ \
D = 
\begin{pmatrix}
X_k^i 1_i & 0 & -1_i 1_i \\
0 & X_k^i 1_i & -1_i X_i^i
\end{pmatrix}.
\end{align*}

Likewise, if $ \la i, j \ra = -1 $ then we define $ \s_i^{-1}(T_{ji}) $ by
\begin{equation*}
\xymatrix{
\s_i^{-1}(\P_j \P_i) \ar[dd]^{\s_i^{-1}(T_{ji})} &=&  \P_j \P_i \ar[r]^{A \hspace{.6in}} \ar[dd]^{T_{ji}} & \P_j \P_i \oplus \P_j \P_i \la 2 \ra \oplus \P_i \P_i \la 1 \ra  \ar[rr]^{B} 
\ar[dd]^{\begin{pmatrix} T_{ji} & 0 & 0 \\ 0 & T_{ji} & 0 \\ 0 & 0 & T_{ii} \end{pmatrix}}&&  \P_i \P_i \la 1 \ra \oplus \P_i \P_i \la 3 \ra  \ar[dd]^{\begin{pmatrix} -T_{ii} & 0 \\ 0 & -T_{ii} \end{pmatrix}}  \\
\\
\s_i^{-1}(\P_i \P_j) &=&  \P_i \P_j  \ar[r]^{C \hspace{.6in}} & \P_i \P_j \oplus \P_i \P_j \la 2 \ra \oplus \P_i \P_i \la 1 \ra \ar[rr]^{D} &&  \P_i \P_i \la 1 \ra \oplus \P_i \P_i \la 3 \ra
}
\end{equation*}
where the leftmost column is in degree zero and 
\begin{align*}
A &=
\begin{pmatrix}
1_j 1_i \\
1_j X_i^i \\
X_j^i 1_i
\end{pmatrix}
\ \ \
B = 
\begin{pmatrix}
X_j^i 1_i & 0 & -1_i 1_i \\
0 & X_j^i 1_i & -1_i X_i^i
\end{pmatrix}
\\
C&=
\begin{pmatrix}
1_i 1_j \\
X_i^i 1_j \\
1_i X_j^i
\end{pmatrix}
\ \ \
D = 
\begin{pmatrix}
-1_i X_j^i & 0 & 1_i 1_i \\
0 & -1_i X_j^i & X_i^i 1_i
\end{pmatrix}.
\end{align*}

On the other hand, if $ \la i, j \ra = 0 $ then $ \s_i^{-1}(T_{ij}) $ and $ \s_i^{-1}(T_{ji}) $ are given by 
\begin{equation*}
\xymatrix{
\s_i^{-1}(\P_i \P_j) \ar[dd]^{\s_i^{-1}(T_{ij})} &=&  \P_i \P_j \ar[rr]^{( 1_i 1_j \ \ X_i^i 1_j ) \hspace{.4in}} \ar[dd]^{T_{ij}} && \P_i \P_j \oplus \P_i \P_j \la 2 \ra \ar[dd]^{\begin{pmatrix} T_{ij} & 0 \\ 0 & T_{ij} \end{pmatrix}}\\
\\
\s_i^{-1}(\P_j \P_i) &=&   \P_j \P_i  \ar[rr]^{( 1_j 1_i \ \ 1_j X_i^i ) \hspace{.4in}} && \P_j \P_i \oplus \P_k \P_i \la 2 \ra
}
\end{equation*}
\begin{equation*}
\xymatrix{
\s_i^{-1}(\P_j \P_i) \ar[dd]^{\s_i^{-1}(T_{ji})} &=&  \P_j \P_i \ar[rr]^{( 1_j 1_i \ \ 1_j X_i^i ) \hspace{.4in}} \ar[dd]^{T_{ji}} && \P_j \P_i \oplus \P_j \P_i \la 2 \ra \ar[dd]^{\begin{pmatrix} T_{ji} & 0 \\ 0 & T_{ji} \end{pmatrix}}\\
\\
\s_i^{-1}(\P_i \P_j) &=&  \P_i \P_j  \ar[rr]^{( 1_i 1_j \ \ X_i^i 1_j) \hspace{.4in}} && \P_i \P_j \oplus \P_i \P_j \la 2 \ra
}
\end{equation*}
where the leftmost columns are in cohomological degree zero.

Next, suppose $ \la i, j \ra = -1 = \la i, k \ra $.  Define $ \s_i^{-1}(T_{jk}) $ by 
\begin{equation*}
\xymatrix{
\s_i^{-1}(\P_j \P_k) \ar[dd]^{\s_i^{-1}(T_{jk})} &=&  \P_j \P_k \ar[rrr]^{( X_j^i 1_k \ \ 1_j X_k^i) \hspace{.5in}} \ar[dd]^{T_{jk}} &&& \P_i \P_k \la 1 \ra \oplus \P_j \P_i \la 1 \ra \ar[rrrr]^{\hspace{.5in}( 1_i X_k^i \ \ X_j^i 1_i )} \ar[dd]^{\begin{pmatrix} 0 & T_{ji} \\ T_{ik} & 0 \end{pmatrix}}  &&&& \P_i \P_i \la 2 \ra \ar[dd]^{T_{ii}}  \\
\\
\s_i^{-1}(\P_k \P_j) &=&  \P_k \P_j \ar[rrr]^{( X_k^i 1_j \ \ 1_k X_j^i )\hspace{.5in}} &&& \P_i \P_j \la 1 \ra \oplus \P_k \P_i \la 1 \ra \ar[rrrr]^{\hspace{.5in} ( 1_i X_j^i \ \  X_k^i 1_i)} &&&& \P_i \P_i \la 2 \ra
}
\end{equation*}
where the leftmost column is in cohomological degree zero.

On the other hand, if $ \la i, j \ra = - 1 $ and $ \la i,k \ra = 0 $ then $ \s_i^{-1}(T_{jk}) $ and $ \s_i^{-1}(T_{kj}) $ are defined by
\begin{equation*}
\xymatrix{
\s_i^{-1}(\P_j \P_k) \ar[d]^{\s_i^{-1}(T_{jk})} &=&  \P_j \P_k \ar[r]^{X_j^i 1_k} \ar[d]^{T_{jk}} & \P_i \P_k \la 1 \ra \ar[d]^{T_{ik}}\\
\s_i^{-1}(\P_k \P_j) &=&  \P_k \P_j  \ar[r]^{1_k X_j^i} & \P_k \P_i \la 1 \ra
}
\end{equation*}
\begin{equation*}
\xymatrix{
\s_i^{-1}(\P_k \P_j) \ar[d]^{\s_i^{-1}(T_{kj})} &=&  \P_k \P_j \ar[r]^{1_k X_j^i} \ar[d]^{T_{kj}} & \P_k \P_i \la 1 \ra \ar[d]^{T_{ki}}\\
\s_i^{-1}(\P_j \P_k) &=&  \P_j \P_k  \ar[r]^{X_j^i 1_k} & \P_i \P_k \la 1 \ra
}
\end{equation*}
where both leftmost columns are in cohomological degree zero.

Finally if $ \la i, j \ra = 0 = \la i, k \ra $ then $ \s_i^{-1}(T_{jk}) = T_{jk} $.

\begin{prop}\label{prop:welldefined}
The definitions above give well defined endomorphisms of the 2-category $\Kom(\H)$. 
\end{prop}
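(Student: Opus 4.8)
The plan is to verify that each assignment $\s_i^{\pm 1}$ satisfies the axioms of a graded additive $\k$-linear $2$-endofunctor of $\Kom(\H')$. Since the $1$-morphisms of $\H'$ are generated under composition, direct sum and grading shift by the $\P_j$, and the $2$-morphisms are generated by the $X_j^k$ and $T_{jk}$ subject to the relations recalled from \cite{CL1}, the monoidal extension rule $\s_i(\P_j \P_j) = \s_i(\P_j)\s_i(\P_j)$ determines a unique candidate functor once the assignment is fixed on these generators, which is exactly the content of sections \ref{sec:action1}--\ref{sec:action5}. Well-definedness then reduces to two checks: first, that each generating $2$-morphism is sent to a genuine $2$-morphism of $\Kom(\H')$, i.e.\ an honest chain map; and second, that every defining relation among the $X$'s and $T$'s is preserved, as an equality of chain maps up to homotopy.

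First I would dispose of the chain-map condition. For each square displayed in sections \ref{sec:action2}--\ref{sec:action5}, I would check directly that it commutes, so that $\s_i(X_j^k)$, $\s_i^{-1}(X_j^k)$, $\s_i(T_{jk})$ and $\s_i^{-1}(T_{jk})$ really are maps of complexes. Each verification is a short diagram chase using only the elementary relations of $\H$ among $X_i^i$, $X_i^j$ and $T_{jk}$ (in particular $X_i^i X_i^i = 0$ and the relations sliding a dot past a crossing), together with the superpermutation signs recorded in Remark \ref{rem:super}. The cases separate cleanly according to the Dynkin-local configuration of $i,j,k$ (equal, joined by an edge, non-adjacent, or forming a triangle), so this step is lengthy but routine.

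Next I would verify that the images under $\s_i^{\pm 1}$ of the defining relations of $\H'$ hold in $\Kom(\H')$. For each relation $R$ in the generators-and-relations presentation — the nilpotency and cyclicity relations for the dots, the relations controlling $T_{jk}$ (including the square of $T_{ii}$, compatibility of $T$ with $X$, and the relation $\ep_{ab}(a|b|a) = \ep_{ac}(a|c|a)$ encoding $B^D_\ep$) — I would compute both sides after applying $\s_i$ and exhibit an explicit homotopy between them. Because $\Kom(\H')$ is a homotopy category, producing a homotopy suffices in place of a strict equality, which gives welcome flexibility. I would organize the argument by first treating relations supported at a single vertex, then those supported on an edge, and finally the triangle relations.

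The main obstacle is the combinatorial size of the second step combined with the simultaneous bookkeeping of the orientation signs $\ep_{ij}$ and the superdegree signs. The hardest cases are those in which $i$ is adjacent to both $j$ and $k$, and especially the triangle configuration $\la i,j\ra = \la j,k\ra = \la i,k\ra = -1$: there the complexes $\s_i(\P_j)$ are genuinely two-term, so the induced maps on compositions such as $\P_j \P_k$ acquire several components, and commutativity of the resulting diagrams hinges on cancellations among the $\ep_{ij}$ and the superpermutation signs occurring at once. Checking that the image of each $T$-relation holds up to homotopy in these mixed-vertex situations — where the relevant homotopies are nonzero and must be written down by hand — is where the real content lies; by contrast, the single-vertex and non-adjacent relations are comparatively immediate.
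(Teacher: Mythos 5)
Your proposal is correct and follows essentially the same route as the paper: the paper's proof likewise reduces well-definedness to checking that the images of the generating 2-relations agree (organizing by the local Dynkin configuration of the vertices involved), and then, owing to the combinatorial size of this check, only writes out the hardest instance — the cubic relation $\s_i(T_{jj}1_j)\circ\s_i(1_jT_{jj})\circ\s_i(T_{jj}1_j)=\s_i(1_jT_{jj})\circ\s_i(T_{jj}1_j)\circ\s_i(1_jT_{jj})$ for $\la i,j\ra=-1$, where the claimed matrix identities follow from the three-strand relation in $\H$. The only cosmetic difference is that the paper verifies the relations as strict equalities of chain maps, whereas you allow yourself the (legitimate, slightly more flexible) criterion of equality up to homotopy in $\Kom(\H')$.
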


\subsection{Some homotopy equivalences} 

Some of the definitions above can be simpified, as we now explain. The reason we do not use these simpler definitions is that in practice they are more difficult to work with when checking the braid relations in the next section. 

The complex $\s_i(\P_i)$ is homotopy equivalent to $\P_i \la -2 \ra [1]$ via the maps $\nu_{\P_i}$ and $\bar{\nu}_{\P_i}$ defined as follows 
\begin{equation*}
\xymatrix{
\s_i(\P_i) \ar[d]_{\nu_{\P_i}} & = & \P_i \la -2 \ra \oplus \P_i \ar[rr]^{\hspace{.5cm} (X_i^i \hspace{.1in} 1_i)} \ar[d]_{(1_i \hspace{.1cm} 0)} & &  \P_i \ar[d] \\
\P_i \la -2 \ra [1] \ar[d]_{\bar{\nu}_{\P_i}} & = & \P_i \la -2 \ra \ar[rr] \ar[d]_{(1_i \hspace{.1cm} -X_i^i)} & & 0 \ar[d] \\
\s_i(\P_i) & = & \P_i \la -2 \ra \oplus \P_i \ar[rr]^{\hspace{.5cm} (X_i^i \hspace{.1in} 1_i)} & &  \P_i
}
\end{equation*}
Clearly $\nu_{\P_i} \bar{\nu}_{\P_i}$ is the identity map. On the other hand, $\bar{\nu}_{\P_i} \nu_{\P_i} $ is homotopic to the identity using $(0 \hspace{.1cm} -1_i): \P_i \rightarrow \P_i \la -2 \ra \oplus \P_i$. 

Likewise, $\s_i^{-1}(\P_i)$ is homotopy equivalent to $\P_i \la 2 \ra [-1]$ via the maps $\zeta_{\P_i}$ and $\bar{\zeta}_{\P_i}$ defined by
\begin{equation*}
\xymatrix{
\s_i^{-1}(\P_i) \ar[d]_{{\zeta}_{\P_i}} & = & \P_i \ar[d] \ar[rr]^{\hspace{-.3cm} (1_i \hspace{.1cm} X_i^i)} & & \P_i \oplus \P_i \la 2 \ra \ar[d]^{(-X_i^i \hspace{.1cm} 1_i)} \\
\P_i \la 2 \ra [-1] \ar[d]_{\bar{\zeta}_{\P_i}} & = & 0 \ar[d] \ar[rr] & & \P_i \la 2 \ra \ar[d]^{(0 \hspace{.1cm} 1_i)} \\
\s_i^{-1}(\P_i) & = & \P_i \ar[rr]^{\hspace{-.3cm}(1_i \hspace{.1cm} X_i^i)} & & \P_i \oplus \P_i \la 2 \ra
}
\end{equation*}

Using these homotopy equivalences one simplify some of the definitions above. In particular, if $\la i,j \ra = -1$ then we get 
\begin{equation*}
\xymatrix{
\nu_{\P_i} \s_i(\P_i \la -1 \ra) \ar[d]_{\s_i(X_i^j) \circ \bar{\nu}_{\P_i}} & = & \P_i \la -3  \ra \ar[r] \ar[d]^{-X_i^i}& 0 \ar[d] \\
\s_i(\P_j) & = & \P_i \la -1 \ra \ar[r]^{X_i^{j}} & \P_{j} 
}
\end{equation*}
\begin{equation*}
\xymatrix{
\s_i(\P_j \la -1 \ra) \ar[d]_{\nu_{\P_i} \circ \s_i(X_j^i)} & = & \P_i \la -2  \ra \ar[r]^{X_i^{j}} \ar[d]^{\epsilon_{ij} 1_i}& \P_{j} \la -1 \ra  \ar[d]  \\
\nu_{\P_i} \s_i(\P_i) & = & \P_i \la -2 \ra \ar[r]^{} & 0
}
\end{equation*}
Moreover, it is easy to check that
\begin{equation*}
(\nu_{\P_i} \nu_{\P_i}) \circ \s_i(T_{ii}) \circ (\bar{\nu}_{\P_i} \bar{\nu}_{\P_i}) = -T_{ii} \colon \P_i \P_i \la -4 \ra [2] \rightarrow \P_i \P_i \la -4 \ra [2].
\end{equation*}
while if $\la i,j \ra = -1$ then 
\begin{equation*}
\xymatrix{
(\nu_{\P_i} \s_i(\P_i)) \s_i (\P_j) \ar[d]_{(1_{\s_i(\P_j)} \nu_{\P_i}) \circ (\s_i(T_{ij})) \circ (\bar{\nu}_{\P_i} 1_{\s_i(\P_j)})} & = & \P_i \P_i \la -3 \ra \ar[r]^{-1_i X_i^j} \ar[d]^{-T_{ii}} & \P_i \P_j \la -2 \ra \ar[d]^{T_{ij}} \\
\s_i(\P_j) (\nu_{\P_i} \s_i(\P_i)) & = & \P_i \P_i \la -3 \ra \ar[r]^{X_i^j 1_i} & \P_j \P_i \la -2 \ra
}
\end{equation*}
\begin{equation*}
\xymatrix{
\s_i(\P_j) (\nu_{\P_i} \s_i(\P_i)) \ar[d]_{(\nu_{\P_i} 1_{\s_i(\P_j)}) \circ (\s_i(T_{ji})) \circ (1_{\s_i(\P_j)} \bar{\nu}_{\P_i})} & = & \P_i \P_i \la -3 \ra \ar[r]^{X_i^j 1_i} \ar[d]^{-T_{ii}} & \P_j \P_i \la -2 \ra \ar[d]^{T_{ji}} \\
(\nu_{\P_i} \s_i(\P_i)) \s_i(\P_j) & = & \P_i \P_i \la -3 \ra \ar[r]^{-1_i X_i^j} & \P_i \P_j \la -2 \ra
}
\end{equation*}
where the rightmost columns are in cohomological degree $-1$. 

Finally, if $\la i,j \ra = 0$ then 
\begin{align*}
1_j \nu_{\P_i} \circ \s_i(T_{ij}) \circ \bar{\nu}_{\P_i}1_j = T_{ij}: & \P_i \P_j \la -2 \ra [1] \rightarrow \P_j \P_i \la -2 \ra [1] \\
\nu_{\P_i} 1_j \circ \s_i(T_{ji}) \circ 1_j \bar{\nu}_{\P_i} = T_{ji}: & \P_j \P_i \la -2 \ra [1] \rightarrow \P_i \P_j \la -2 \ra [1].
\end{align*}

There are similar simplifications involving $\s_i^{-1}$ which we omit. 

\section{Some remarks and conjectures}\label{sec:remarks}

\subsection{The action on 1-morphisms in $\Kom(\H')$}

The action of $\Br(D)$ on $\Kom(\H')$ from section \ref{sec:braidH} was defined explicitly only on 1-morphisms $\P_i$. On some more general 1-morphisms it acts as follows.

\begin{prop}\label{prop:2}
The braid group action from section \ref{sec:braidH} on the 2-category $\Kom(\H')$ acts on $\P_j^{(n)}$ by
$$\s_i(\P_j^{(n)}) \cong
\begin{cases}
\P_i^{(1^n)} \la -2n \ra [n] & \text{ if } i=j \\
\left[\P_i^{(n)} \la -n \ra \rightarrow \P_i^{(n-1)} \P_j \la -n+1 \ra \rightarrow \dots \rightarrow \P_i \P_j^{(n-1)} \la -1 \ra \rightarrow \P_j^{(n)} \right] & \text{ if } \la i,j \ra = -1 \\
\P_j^{(n)} & \text{ if } \la i,j \ra = 0
\end{cases}$$
where the rightmost term is in cohomological degree zero and by
$$\s_i^{-1}(\P_j^{(n)}) \cong
\begin{cases}
\P_i^{(1^n)} \la 2n \ra [-n] & \text{ if } i=j \\
\left[\P_j^{(n)} \rightarrow \P_j^{(n-1)} \P_i \la 1 \ra \rightarrow \dots \rightarrow \P_j \P_i^{(n-1)} \la n-1 \ra \rightarrow \P_i^{(n)} \la n \ra \right] & \text{ if } \la i,j \ra = -1 \\
\P_j^{(n)} & \text{ if } \la i,j \ra = 0
\end{cases}$$
where the leftmost term is in degree zero. The differentials in the complexes are the unique morphisms (up to a scalar) given by the compositions
\begin{align*}
& \P_i^{(r)} \P_j^{(n-r)} \rightarrow \P_i^{(r-1)} \P_i \P_j^{(n-r)} \xrightarrow{IX_i^jI} \P_i^{(r-1)} \P_j \P_j^{(n-r)} \la 1 \ra \rightarrow \P_i^{(r-1)} \P_j^{(n-r+1)} \la 1 \ra \\
& \P_i^{(r)} \P_j^{(n-r)} \rightarrow \P_i^{(r)} \P_j \P_j^{(n-r-1)} \xrightarrow{IX_j^iI} \P_i^{(r)} \P_i \P_j^{(n-r-1)} \la 1 \ra \rightarrow \P_i^{(r+1)} \P_j^{(n-r-1)} \la 1 \ra.
\end{align*}
\end{prop}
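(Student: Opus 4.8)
The plan is to exploit the monoidality of the braid functor. Since the action of $\s_i$ on $\Kom(\H')$ is defined on the generating $1$- and $2$-morphisms and then extended monoidally (and is well defined by Proposition \ref{prop:welldefined}), it sends $\P_j^{(n)} = \mathrm{im}(e_{(n)}) \subset \P_j^{\circ n}$ — where $e_{(n)}$ is the symmetrizer, built from the images of the $T_{jj}$ under the embedding $\k[S_n]\hookrightarrow\End(\P_j^{\circ n})$ — to the image of the induced idempotent $\s_i(e_{(n)})$ acting on $\s_i(\P_j)^{\circ n}$. Thus the whole computation reduces to splitting this one idempotent in each of the three cases. The case $\la i,j \ra = 0$ is immediate, since $\s_i$ then fixes both $\P_j$ and $T_{jj}$, hence fixes $e_{(n)}$ and $\P_j^{(n)}$. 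The case $i=j$ follows from the homotopy equivalence $\s_i(\P_i) \simeq \P_i \la -2 \ra [1]$ of the ``Some homotopy equivalences'' subsection: it gives $\s_i(\P_i)^{\circ n} \simeq \P_i^{\circ n} \la -2n \ra [n]$, and under this equivalence the computed relation $\s_i(T_{ii}) = -T_{ii}$ shows that $\s_i$ acts on $\k[S_n]$ by $s_k \mapsto -s_k$, i.e.\ by the involution $\tau$. Since $\tau(e_{(n)}) = e_{(n)^t} = e_{(1^n)}$, we obtain $\s_i(\P_i^{(n)}) \simeq \P_i^{(1^n)} \la -2n \ra [n]$, as claimed.

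The case $\la i,j \ra = -1$ is the substantive one. First I would describe $\s_i(\P_j)^{\circ n}$, the $n$-fold composition of the two-term complex $[\P_i \la -1 \ra \xrightarrow{X_i^j} \P_j]$, as a total complex whose cohomological degree $-r$ part is $\bigoplus_{|S|=r} \P[S] \la -r \ra$, the sum over $r$-subsets $S \subset \{1,\dots,n\}$ of the monomial carrying $\P_i$ in the slots of $S$ and $\P_j$ elsewhere. The key step is to identify $\s_i(e_{(n)})$ on this part. Using the explicit formula for $\s_i(T_{jj})$ from Section \ref{sec:action4} (the $\la i,j \ra = \la i,k \ra = -1$ case with $k=j$), the simple transposition $s_k$ acts on two adjacent equal-type slots by the honest generator $T_{ii}$, resp.\ $T_{jj}$, and on two adjacent unequal-type slots by the commutation isomorphism, carrying $\P[S]$ to $\P[S']$. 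Consequently the degree $-r$ part is the induced module $\mathrm{Ind}_{S_r \times S_{n-r}}^{S_n}(\P_i^{\circ r} \boxtimes \P_j^{\circ (n-r)})$ equipped with the genuine (unsigned) $S_n$-action. By the standard parabolic-induction identity for the symmetrizer, $e_{(n)} \cdot \mathrm{Ind}_{S_r \times S_{n-r}}^{S_n}(-) \cong (e_{(r)} \boxtimes e_{(n-r)})(-)$ — equivalently, by Frobenius reciprocity the trivial representation occurs with multiplicity one — so applying $\s_i(e_{(n)})$ cuts out a single copy of $\P_i^{(r)} \P_j^{(n-r)} \la -r \ra$. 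This produces exactly the asserted terms.

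Finally I would pin down the differentials. Since $\s_i(e_{(n)})$ is a chain map (homogeneous of cohomological degree zero), its image inherits differentials only between adjacent cohomological degrees, giving maps $\P_i^{(r)} \P_j^{(n-r)} \la -r \ra \to \P_i^{(r-1)} \P_j^{(n-r+1)} \la -r+1 \ra$. By an argument analogous to Lemma \ref{lem:2} — the only degree-one $2$-morphism between such terms is the one induced by $X_i^j$ — this Hom space is at most one-dimensional, spanned by the stated composition through $\P_i^{(r-1)} \P_i \P_j \P_j^{(n-r)}$. Because $\s_i$ is an autoequivalence of $\Kom(\H')$ (Theorem \ref{thm:main2}) and $\P_j^{(n)}$ is indecomposable, the complex $\s_i(\P_j^{(n)})$ is indecomposable; arguing exactly as in Proposition \ref{prop:Tcpx}, an indecomposable complex whose terms are these distinct indecomposables must have all of its differentials equal to the nonzero spanning maps. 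This identifies $\s_i(\P_j^{(n)})$ with the complex in the statement. The formulas for $\s_i^{-1}$ follow in the same way, using $\s_i^{-1}(\P_i) \simeq \P_i \la 2 \ra [-1]$ together with the $\s_i^{-1}$-formulas of Sections \ref{sec:action3} and \ref{sec:action5}.

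The main obstacle is the sign bookkeeping in the second paragraph: one must verify, directly from the definitions in Section \ref{sec:action4}, that $\s_i(T_{jj})$ acts on the pure-$\P_i$ blocks by $+T_{ii}$, and not by the super sign that the odd homological degree of $\P_i \la -1 \ra$ might suggest. This is precisely what forces the symmetric powers $\P_i^{(r)}$ to appear — in contrast to the transposed power $\P_i^{(1^n)}$ produced when $i=j$ — and it is the only point where the explicit form of the generating $2$-morphisms, rather than merely their formal properties, is needed.
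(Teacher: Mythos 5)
Your proposal is correct and follows essentially the same route as the paper's proof: both reduce the computation to splitting the idempotent $\s_i(e_{(n)})$ on $\s_i(\P_j)^n$, treat the cases $\la i,j \ra = 0$ and $i=j$ identically (via $\s_i(T_{ii}) = -T_{ii}$ and the involution $\tau$), and for $\la i,j \ra = -1$ identify the degree $-r$ term by cutting the induced $S_n$-structure on $\bigoplus_{\bf d} \P_{\bf d} \la -r \ra$ with the symmetrizer --- your Frobenius-reciprocity phrasing is just a repackaging of the paper's explicit coset computation $\P_{({\bf i},{\bf j})} \rightarrow \bigoplus_{\bf d} \P_{\bf d} \rightarrow \bigoplus_{\bf d} \P_{({\bf i},{\bf j})}$, whose diagonal entries are precisely the sum over $S_r \times S_{n-r}$. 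The two genuine additions on your side are welcome rather than divergent: you pin down the differentials via an indecomposability argument in the style of Proposition \ref{prop:Tcpx}, a point the paper's proof leaves implicit, and you correctly flag the sign check that $\s_i(T_{jj})$ acts by $+T_{ii}$ on the pure-$\P_i$ blocks (in contrast to $\s_i(T_{ii}) = -T_{ii}$), which is exactly the fact the paper's computation relies on to produce $\P_i^{(r)}$ rather than $\P_i^{(1^r)}$.
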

\begin{proof}
In order to compute $ \s_i(\P_j^{(n)}) $ one must compute $ \s_i(\P_j^n) $ and determine the image of $ \s_i(e_{(n)}) $ where $ e_{(n)} $ is the trivial idempotent in $ S_n $ acting on $n$ strands colored by $ j $.

The case that $ \la i, j \ra = 0 $ is trivial since $ \s_i(\P_j^n) = \P_j^n $ and $ \s_i(T_{jj}) = T_{jj} $.  Thus the image of $ \s_i(e_{(n)}) $ on $ \P_j^n $ is $ \P_j^{(n)} $ by definition.

If $i=j$ then, up to homotopy, $ \s_i(\P_i^n) \cong \P_i^n \la -2n \ra [n] $ and $ \s_i(e_{(n)}) = e_{(1^n)} $ since $ \s_i(T_{ii}) = -T_{ii} $.  Thus the image of $ \s_i(e_{(n)}) $ on $ \s_i(\P_i^n) $ is $ \P_i^{(1^n)} \la -2n \ra [n] $.

If $\la i, j \ra = -1$ then $\s_i(\P_j) = [\P_i \la -1 \ra \rightarrow \P_j]$ which means that $\s_i(\P_j^n) $ is now a complex. To simplify notation, for a sequence $ {\bf d} = (d_1, \ldots, d_n) $ where each entry is $ i $ or $ j $ we denote $ \P_{\bf d} = \P_{d_1} \cdots \P_{d_n} $. Then the term in $\s_i(\P_j^n)$ lying in cohomological degree $-r$ (where $r \ge 0$) is $\bigoplus_{\bf d} \P_{\bf d} \la -r \ra$ where $r$ of the entries of $ {\bf d} $ are $i$ and $n-r$ are $j$. 

Since $S_n$ permutes the entries of $ {\bf d} $ we may consider the subgroup $ S_{\bf d} \subset S_n $ which stabilizes ${\bf d}$.  Let ${\bf d'}$ be another sequence where $ r $ of the entries are $ i $ and $ n-r $ are $ j $. Then, using the definition of $\sigma_i(T_{jj})$ from section \ref{sec:action4}, the 2-morphism $\s_i(e_{(n)}) $ induces a map $ \P_{\bf d} \la -r \ra \rightarrow \P_{\bf d'} \la -r \ra $ which (up to a nonzero scalar) is the sum over all elements in the coset of $ S_n / S_{\bf d} $ which transforms $ {\bf d} $ to $ {\bf d'} $.

Since for any ${\bf d,d'}$ as above there is an isomorphism $ \phi \colon \P_{\bf d} \rightarrow \P_{\bf d'} $ such that $ \s_i(e_{(n)})_{|\P_{\bf d}} =  \s_i(e_{(n)})_{|\P_{\bf d'}} \circ \phi $ it suffices to consider the compute the image of $ \s_i(e_{(n)}) $ on $ \P_{({\bf i},{\bf j})} $ where $({\bf i}, {\bf j}) := (\underbrace{i, \ldots, i,}_r \underbrace{j, \ldots, j}_{n-r})$.

So consider now $\s_i(e_{(n)})_{|\P_{\bb}}: \P_{\bb} \rightarrow \bigoplus_{\bf d} \P_{\bf d}$. We must show that the image is isomorphic to $\P_i^{(r)} \P_j^{(n-r)}$. Let $ w_{{\bb}, {\bf d}} $ be a minimal length representative in this coset. The component of this map $\P_{\bb} \rightarrow \P_{\bf d}$ is a sum over all elements in the coset of $ S_n / S_r \times S_{n-r} $ which transforms $ {\bb} $ to $ {\bf d} $. 

Consider the composition $ \P_{\bb} \rightarrow \bigoplus_{\bf d} \P_{\bf d} \rightarrow \bigoplus_{\bf d} \P_{\bb} $ where the first map is the restriction of $ \s_i(e_{(n)}) $ and the second map is the isomorphism $ \bigoplus_{\bf d} w_{{\bb}, {\bf d}}^{-1} $.  The composition is the diagonal map where each entry is the sum over all elements in $S_r \times S_{n-r}$. Thus the image of $ \s_i(e_{(n)}) $ in $ \P_{\bb} $ is $ \P_i^{(r)} \P_j^{(n-r)}$.

%%%%%example begins%%%%

For example, take $ r=2, n=3 $ so that $ {\bb} = (i,i,j) $.  
Then the restriction of $ \s_i(e_{(3)}) $ to  $ \P_{(i,i,j)} $ is the map
\begin{equation*}
\begin{pmatrix}
1+s_1 \\
s_2 + s_2 s_1 \\ 
s_1 s_2 s_1 + s_1 s_2
\end{pmatrix}
\colon \P_{(i,i,j)} \rightarrow \P_{(i,i,j)} \oplus \P_{(i,j,i)} \oplus \P_{(j,i,i)}
\end{equation*}
where the $ s_1,s_2 $ are the standard transpositions. Composing this map with 
\begin{equation*}
\bigoplus_{\bf d} w_{{\bb}, {\bf d}}^{-1} = 
\begin{pmatrix}
1 & & \\
& s_2 & \\
& & s_2 s_1
\end{pmatrix}
\colon \P_{(i,i,j)} \oplus \P_{(i,j,i)} \oplus \P_{(j,i,i)} \rightarrow \P_{(i,i,j)} \oplus \P_{(i,i,j)} \oplus \P_{(i,i,j)}
\end{equation*}
we obtain
\begin{equation*}
\begin{pmatrix}
1+s_1 \\
1+s_1 \\
1+s_1
\end{pmatrix}
\colon \P_{(i,i,j)} \rightarrow \P_{(i,i,j)} \oplus \P_{(i,i,j)} \oplus \P_{(i,i,j)}.
\end{equation*}
%%%%example ends%%%%

The computation of $ \s_i^{-1}(\P_j^{(n)}) $ is similar so we omit it.
\end{proof}

\begin{Remark}\label{rem:2}
It was shown in \cite{CL1} that $\K_{Fock}$ categorifies $V_{Fock}$ (the Fock space). This means that there exists an isomorphism 
$$\Phi: K_0(\Kom(\K_{Fock})) = K_0(\K_{Fock}) \rightarrow V_{Fock}$$
which takes $\P_i^{(n)} \1_0 \mapsto P_i^{(n)}$ in the notation of section \ref{subsec:braidfock}. Subsequently, Proposition \ref{prop:2} induces the action of $\Br(D)$ on $V_{Fock}$ described in Proposition \ref{prop:fockspace}.
\end{Remark}

\begin{Remark} 
Consider the zig-zag algebra $B$ of Dynkin type $A_k$. In Section 4 of \cite{KS}, a complex of finitely generated, projective $B$-modules is associated to any ``admissible'' curve in the $k$-punctured disk. Equivalently, this is a complex of $\P$'s in $\Kom(\H')$ (for example $\P_1 \la -1 \ra \rightarrow \P_2$). It would be interesting to generalize that result as follows: to any ``admissible'' $n$-tuple of curves in the $k$-punctured disk, associate a complex of elements in $\H'$ where each term is a sum of a product of exactly $n$ $\P$'s (for example, the complexes appearing in the statement of Proposition \ref{prop:2}). 
\end{Remark}

\subsection{A conjectural intertwiner}\label{sec:intertwiner}

At this point we have two braid group actions -- one is via the complexes $\T_i$ and the other is directly on the 2-category $\Kom(\H)$. We conjecture that these two actions are related as follows.

\begin{conj}\label{conj:intertwiner}
Consider a 2-representation $\K$ of $\H$ and denote by $\R$ an arbitrary 1-morphism in $\Kom(\H)$. Then $ \s_i(\R) \circ \T_i  \cong \T_i \circ \R $.
\end{conj}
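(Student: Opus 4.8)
The plan is to upgrade the conjectured isomorphism into a homotopy equivalence $\beta_\R\colon \T_i\circ\R \xrightarrow{\sim} \s_i(\R)\circ\T_i$ that is natural in $\R$, working universally in $\Kom(\H)$ (using the extension of the action of Section~\ref{sec:braidH} to all of $\Kom(\H)$) so that it descends to any $2$-representation $\K$ by applying the $2$-functor $\H\to\K$. The two assignments $\R\mapsto\T_i\R$ and $\R\mapsto\s_i(\R)\T_i$ are compatible with composition of $1$-morphisms (the first since composition is strictly associative, the second since $\s_i$ is defined monoidally), so a family $\{\beta_\R\}$ obeying the hexagon identity $\beta_{\R\R'}=(\s_i(\R)\,\beta_{\R'})\circ(\beta_\R\,\R')$ is determined by its values on generators. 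Since every $1$-morphism of $\Kom(\H)$ is built under composition, $\oplus$, shifts and cones from the $\P_j$ (the $\Q_j$ then following by biadjunction), and every generating $2$-morphism is an adjunction map or one of $X_j^k, T_{jk}$, it suffices to (i) construct $\beta_{\P_j}$ for each $j$, (ii) verify naturality of $\beta$ against $X_j^k$ and $T_{jk}$, and (iii) check the hexagon on the $\P_j$ so that $\beta$ extends uniquely.

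First I would build $\beta_{\P_j}$ by a case analysis on $\la i,j\ra$. When $\la i,j\ra=0$ the relation $\Q_i\P_j\cong\P_j\Q_i$ lets $\P_j$ slide through each term $\P_i^{(\l)}\Q_i^{(\l^t)}$ of $\T_i$ with no interaction, giving $\T_i\P_j\cong\P_j\T_i=\s_i(\P_j)\T_i$ at once. When $i=j$ I would compute $\T_i\P_i$ termwise, rewriting each $\P_i^{(\l)}\Q_i^{(\l^t)}\P_i$ by Lemma~\ref{lem:1} as $\P_i^{(\l)}\P_i\Q_i^{(\l^t)}\oplus\bigoplus_{\mu^t\subset\l^t}\P_i^{(\l)}\Q_i^{(\mu^t)}\la -1,1\ra$, and then apply the Cancellation Lemma~\ref{lem:cancel} repeatedly: the extra summands telescope against the differentials of $\T_i$ (whose scalars are pinned down by Proposition~\ref{prop:Tcpx}), leaving exactly $\P_i\la -2\ra[1]\T_i$, which equals $\s_i(\P_i)\T_i$ via the homotopy equivalence $\s_i(\P_i)\simeq\P_i\la -2\ra[1]$. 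When $\la i,j\ra=-1$ I would instead use the mixed relation $\Q_i^{(n)}\P_j^{(m)}\cong\P_j^{(m)}\Q_i^{(n)}\oplus\P_j^{(m-1)}\Q_i^{(n-1)}$ to move $\P_j$ across $\T_i$; after the analogous Gaussian elimination the surviving terms assemble into the two-term complex $[\P_i\la -1\ra\to\P_j]\,\T_i=\s_i(\P_j)\T_i$.

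The crux is step (ii), and I expect it to be the main obstacle: I must check that each $\beta_{\P_j}$ intertwines the differentials and the generating $2$-morphisms, i.e. that the square assembled from $\T_i X_j^k$ and $\s_i(X_j^k)\T_i$ (and likewise the one for $T_{jk}$) commutes up to homotopy, where $\s_i(X_j^k)$ and $\s_i(T_{jk})$ are taken from the explicit formulas of Sections~\ref{sec:action2}--\ref{sec:action5}. The hardest instance is the crossing $T_{jk}$ in the triangle configuration $\la i,j\ra=\la j,k\ra=\la i,k\ra=-1$, where $\s_i(T_{jk})$ is by far the most intricate and the verification will require the full set of $\H$-relations among the $X$'s and $T$'s. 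A useful bookkeeping device is to replace $\s_i(\P_i)$ throughout by its homotopy representative $\P_i\la -2\ra[1]$ via the maps $\nu_{\P_i},\bar\nu_{\P_i}$ recorded at the end of Section~\ref{sec:braidH}; this collapses the $i=j$ strands of every square to a single term and cuts the verifications down to finitely many diagrammatic identities.

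Finally, step (iii) is routine bookkeeping: because both monoidal structures are strict, the hexagon $\beta_{\R\R'}=(\s_i(\R)\,\beta_{\R'})\circ(\beta_\R\,\R')$ holds on all $1$-morphisms once it holds on the generators $\P_j$ and is compatible with the adjunction $2$-morphisms, so $\beta$ extends canonically to all of $\Kom(\H)$. As a consistency check one can apply $K_0$ and compare with Remark~\ref{rem:2}: the induced identity $\sigma_i(P_j)\cdot[\T_i]=[\T_i]\cdot P_j$ is precisely the Fock-space braid action of Section~\ref{subsec:braidfock}, which both confirms the shifts appearing in the construction of $\beta_{\P_j}$ and rules out sign or grading errors before the full $2$-morphism computation is carried out.
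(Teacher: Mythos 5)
The first thing to note is that the statement you are proving is Conjecture \ref{conj:intertwiner}: the paper offers no proof of it, so there is no argument of the authors to compare yours against; a complete and correct argument here would be new mathematics beyond the paper. What you have written, however, is a strategy rather than a proof, and the places where you defer work are exactly the places where all of the content lies. Your step (i) constructions of $\beta_{\P_j}$ are believable (and consistent with $K_0$, cf. Proposition \ref{prop:2} and Remark \ref{rem:2}), but your step (ii) --- naturality of $\beta$ against the generating 2-morphisms $X_j^k$, $T_{jk}$ and against the adjunction maps --- is acknowledged by you to be ``the main obstacle'' and is not carried out in a single case. Judging by the appendix, where merely verifying that $\s_i$ respects one braid-like 2-relation already requires pages of matrix computations, these naturality checks are the bulk of any genuine proof; an argument that stops before them has not engaged with the conjecture's difficulty.

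There are also two structural problems with your setup. First, the action of $\s_i^{\pm 1}$ from Section \ref{sec:braidH} is defined only on $\Kom(\H')$, the subcategory generated by the $\P_j$'s; the paper explicitly declines to extend it to all of $\Kom(\H)$, noting that doing so ``would require checking even more relations.'' Your parenthetical ``(the $\Q_j$ then following by biadjunction)'' hides this: to make sense of $\s_i(\R)$ for arbitrary $\R$ in $\Kom(\H)$ you must first define $\s_i(\Q_j)$ and verify compatibility with all cup/cap relations, which is an unproven extension, not a formality --- yet the conjecture as stated quantifies over all of $\Kom(\H)$. Second, in $\Kom(\H)$ itself the complex $\T_i \1_n$ is unbounded (it is finite only in integrable 2-representations), so the ``telescoping'' Gaussian eliminations you invoke in the cases $i=j$ and $\la i,j \ra = -1$ involve infinitely many cancellations; you would need either to control convergence of the resulting homotopy equivalence or to work inside an integrable $\K$ from the start, where Proposition \ref{prop:Tcpx} and Lemma \ref{lem:cancel} apply but where you lose the universality your descent argument relies on. None of these issues is necessarily fatal, and your outline is a reasonable first pass at the conjecture, but as written it is an announcement of intent rather than a proof.
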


In particular, this means that the braid group action on $\Kom(\H)$ from section \ref{sec:braidH} is just conjugation using the complexes $\T_i$. 

\subsection{Vertex operators and braid groups}\label{sec:vertexops}

In this section we suppose that our Dynkin diagram $D$ is of affine type (and still simply laced). Denote by $\g$ the associated affine Lie algebra. In \cite{CLa} we defined what it means to have a 2-representation of $\g$. Roughly, this consists of a 2-category where the objects are indexed by weights of $\g$, 1-morphisms are generated by $\E_i^{(r)}$ and $\F_i^{(r)}$ where $i \in I, r \in \N$ and there are various 2-morphisms with relations. This definition is analogous to the one from this paper for 2-representations of $\h$.

Suppose $\K$ is a integrable 2-representation of $\h$. In \cite{CL2} we showed that $\Kom(\K)$ can be given the structure of a 2-representation of $\g$. This categorifies the Frenkel-Kac-Segal vertex operator construction. Roughly, we did the following. 
\begin{itemize}
\item We defined $\1_\l \mapsto \1_n$ if $\l = w \cdot \Lambda_0 - n \delta$ for some Weyl element $w$ and $\1_\l \mapsto 0$ otherwise. Here $\l$ is a weight, $\Lambda_0$ is the fundamental weight corresponding to the affine node and $\delta$ is the imaginary root.
\item We mapped $\E_i,\F_i$ as follows
\begin{align}
\label{eq:vertex1}
\E_i \1_\l &\mapsto \left[ \dots \rightarrow \P_i^{(l)} \Q_i^{(1^{k+l})} \la -l \ra \rightarrow \dots \rightarrow \P_i \Q_i^{(1^{k+1})} \la -1 \ra \rightarrow \Q_i^{(1^{k})} \right] \\
\label{eq:vertex2}
\1_\l \F_i &\mapsto \left[ \P_i^{(1^{k})} \rightarrow \P_i^{(1^{k+1})} \Q_i \la 1 \ra \rightarrow \dots \rightarrow \P_i^{(1^{k+l})} \Q_i^{(l)} \la l \ra \rightarrow \dots \right]
\end{align}
if $k := \la \l, \alpha_i \ra + 1 \ge 0$ (and to similar complexes if $k < 0$).
\end{itemize}
We argued that this action extends to give a 2-representation of $\g$. This means that there also exist complexes for divided power $\E_i^{(r)} \1_\l$ and $\1_\l \F_i^{(r)}$. We gave the following conjectural explicit description of these complexes (proven when $r=1,2$). Although we did not identify these complexes explicitly (apart from the cases $r=1,2$) we did conjecture that in general, if $k := \la \l, \alpha_i \ra + r \ge 0$, then 
\begin{equation}\label{eq:E}
\E_i^{(r)} \1_\l :=
\left[ \dots \rightarrow \bigoplus_{w(\mu) \le r, |\mu|=l} \P_i^{(\mu^t)} \Q_i^{(r^k, \mu)} \la -l \ra \rightarrow \dots \rightarrow \P_i \Q_i^{(r^k,1)} \la -1 \ra \rightarrow \Q_i^{(r^k)} \right] \la - \binom{r}{2} \ra [\binom{r}{2}]. 
\end{equation}
for certain explicit differentials. Here the direct sum is over all partitions $\mu$ of size $|\mu|$ which fit in a box of width $r$ ($w(\mu)$ denotes the width of $\mu$). We also conjectured similar formulas if $k \le 0$ and likewise for $\1_\l \F_i^{(r)}$. 

\subsubsection{Associated braid group actions}
Given any 2-representation of $\g$, we considered in \cite{CK2} the Rickard complex defined by
\begin{equation}\label{eq:T}
\sT_i \1_\l := \left[ \dots \rightarrow \E_i^{(-\la \l, \alpha_i \ra + s)} \F_i^{(s)} \la -s \ra \1_\l \rightarrow \dots \rightarrow \E_i^{(- \la \l,\alpha_i \ra + 1)} \F_i \la -1 \ra \1_\l \rightarrow \E_i^{(- \la \l,\alpha_i \ra)} \1_\l \right]
\end{equation}
if $\la \l,\alpha_i \ra \le 0$ (and similarly if $\la \l,\alpha_i \ra \ge 0$). We then showed \cite[Theorem 2.10]{CK2} that these complexes satisfy the braid relations in $\Br(D)$. Notice that the domain and range of $\sT_i \1_\l$ are given by
$$\sT_i \1_\l: \l \rightarrow s_i \cdot \l \ \ \text{ where } \ \ s_i \cdot \l = \l - \la \l, \alpha_i \ra \alpha_i.$$
If $\1_\l \mapsto \1_n$ under the map from \cite{CL2} then it is easy to check that $\1_{s_i \cdot \l} \mapsto \1_n$. Thus, if we compose the complexes for $\sT_i$ from (\ref{eq:T}) with those for $\E$'s and $\F$'s (from (\ref{eq:vertex1}) and (\ref{eq:vertex2})) then we obtain complexes $\sT_i \in \Kom(\K)$ with domain and range $n$. 

{\bf Example 1.} Suppose $\1_\l \mapsto \1_n$ under the map from \cite{CL2} with $\la \l, \alpha_i \ra = -n$. Then, the complex from (\ref{eq:E}) with $k=0$ and $r=n$ gives the following expression for $\E_i^{(n)} \1_\l$ 
\begin{equation}\label{eq:E'}
\left[ \dots \rightarrow \bigoplus_{w(\mu) \le n, |\mu|=l} \P_i^{(\mu^t)} \Q_i^{(\mu)} \la -l \ra \1_n \rightarrow \dots \rightarrow \P_i \Q_i \la -1 \ra \1_n \rightarrow \1_n \right] \la - \binom{n}{2} \ra [\binom{n}{2}].
\end{equation}
Notice that the terms in (\ref{eq:E'}) are zero if $|\mu| > n$ so the extra condition that $w(\mu) \le n$ is not necessary. Subsequently, $\E_i^{(n)} \1_\l$ is the same as our complex $\T_i \1_n$.  On the other hand, it is not difficult to check that in this case $\F_i^{(s)} \1_\l = 0$ for any $s > 0$. Thus, the expression in (\ref{eq:T}) simplifies to give $\sT_i \1_\l = \E_i^{(n)} \1_\l$.  Thus, $\sT_i \1_\l = \T_i \1_n$ and, using \cite{CK2}, we recover the braiding of the $\T_i \1_n$ (Theorem \ref{thm:main1}). 

{\bf Example 2.} Suppose $\la \l, \alpha_i \ra = 1$ and that, under the map in \cite{CL2}, $\1_\l \mapsto \1_2$. Then
$$\sT_i \1_\l = \left[ \F_i^{(2)} \E_i \la -1 \ra \1_\l \rightarrow \F_i \1_\l \right]: \l \rightarrow s_i \cdot \l.$$
Using the definitions in \cite{CL2}, one can check that  
\begin{align*}
\F_i \1_\l &\mapsto \left[ \1_2 \rightarrow \P_i \Q_i \la 1 \ra \1_2 \rightarrow \P_i^{(1^2)} \Q_i^{(2)} \la 2 \ra \1_2 \right] \\
\F_i^{(2)} \1_{\l + \alpha_i} &\mapsto \P_i^{(2)} \la 1 \ra [-1] \1_0 \ \ \text{ and } \ \ \E_i \1_\l \mapsto \Q_i^{(1^2)} \1_2.
\end{align*}
Combining this together gives that
$$\P_i^{(1^2)} \Q_i^{(2)} [-1] \1_2 \rightarrow \left[ \1_2 \rightarrow \P_i \Q_i \la 1 \ra \1_2 \rightarrow \P_i^{(2)} \Q_i^{(1^2)} \la 2 \ra \1_2 \right].$$
This collapses to give a complex of the form
\begin{equation}\label{eq:eg2}
\left[ \P_i^{(1^2)} \Q_i^{(2)} \1_2 \oplus \1_2 \rightarrow \P_i \Q_i \la 1 \ra \1_2 \rightarrow \P_i^{(2)} \Q_i^{(1^2)} \la 2 \ra \1_2 \right].
\end{equation}
Notice that this complex, is {\em not} of the form of $\T_i \1_2$. 

\subsubsection{In conclusion}
Using \cite{CK2}, and assuming the conjectural expressions for $\E_i^{(r)}$ and $\F_i^{(r)}$ from \cite{CL2} (like the one in (\ref{eq:E})) we recover the main result in this paper. On the other hand, as Example 2 above illustrates, the full categorical action from \cite{CL2} and the braid group action it induces via \cite{CK2} gives us a larger collection of complexes in $\Kom(\H)$ which satisfy the braid relations ((\ref{eq:eg2}) is an example of one such complex). It would be interesting to explicitly identify all these complexes in $\Kom(\H)$ directly. 

\appendix

\section{Proof of Proposition \ref{prop:welldefined}}

What one needs to check is that the image of any two equivalent 2-morphisms ({\it i.e.} related by some 2-relation) under any $\s_i^{\pm 1}$ are identical. 

There are many 2-relations so we will not check all of them in this paper. We illustrate by checking one of the most difficult relations, namely 
$$\s_i(T_{jj} 1_j) \circ \s_i(1_j T_{jj}) \circ \s_i (T_{jj} 1_j) = \s_i(1_j T_{jj}) \circ \s_i (T_{jj} 1_j) \circ \s_i(1_j T_{jj})$$
whenever $\la i,j \ra = -1$. 

By direct computation, $\s_i(T_{jj} 1_j) \circ \s_i(1_j T_{jj}) \circ \s_i(T_{jj} 1_j)$ is the following map of complexes:
\begin{equation*}
\xymatrix{
\P_i \P_i \P_i \la -3 \ra \ar[r]^{a} \ar[dd]^{A}  & {\begin{matrix}  \P_i \P_i \P_j \la -2 \ra \\ \oplus \\  \P_i \P_j \P_i \la -2 \ra \\ \oplus \\  \P_j \P_i \P_i \la -2 \ra \end{matrix}} \ar[r]^{b} \ar[dd]^{B}  & {\begin{matrix} \P_i \P_j \P_j \la -1 \ra \\ \oplus \\  \P_j \P_i \P_j \la -1 \ra \\ \oplus \\  \P_j \P_j \P_i \la -1 \ra  \end{matrix}} \ar[r]^{c} \ar[dd]^{C} & \P_j \P_j \P_j  \ar[dd]^{D} \\
\\
\P_i \P_i \P_i \la -3 \ra \ar[r]^{d}  & {\begin{matrix}  \P_i \P_i \P_j \la -2 \ra \\ \oplus \\ \P_i \P_j \P_i \la -2 \ra \\ \oplus \\ \P_j \P_i \P_i \la -2 \ra  \end{matrix}} \ar[r]^{e} & {\begin{matrix} \P_i \P_j \P_j \la -1 \ra \\ \oplus \\ \P_j \P_i \P_j \la -1 \ra  
\\ \oplus \\ \P_j \P_j \P_i \la -1 \ra \end{matrix}} \ar[r]^{f}  & \P_j \P_j \P_j  
}
\end{equation*}
where 
\begin{equation*}
a = 
\begin{pmatrix}
1_i 1_i X_i^j \\
1_i X_i^j 1_i \\
X_i^j 1_i 1_i \\
\end{pmatrix}
\hspace{.2in}
b = 
\begin{pmatrix}
1_i X_i^j 1_j & 1_i 1_j X_i^j & 0 \\
X_i^j & 0 & 1_j 1_i X_i^j \\
0 & X_i^j 1_j 1_i & 1_j X_i^j 1_i
\end{pmatrix}
\hspace{.2in}
c = 
\begin{pmatrix}
X_i^j 1_j 1_j & 1_j X_i^j 1_j & 1_j 1_j X_i^j
\end{pmatrix}
\end{equation*}
\begin{equation*}
d = 
\begin{pmatrix}
1_i X_i^j 1_i \\
1_i 1_i X_i^j \\
X_i^j 1_i 1_i \\
\end{pmatrix}
\hspace{.2in}
e = 
\begin{pmatrix}
1_i 1_j X_i^j & 1_i X_i^j 1_j & 0 \\
X_i^j & 0 & 1_j X_i^j 1_i \\
0 & X_i^j 1_i 1_j & 1_j 1_i  X_i^j
\end{pmatrix}
\hspace{.2in}
f = 
\begin{pmatrix}
X_i^j 1_j 1_j & 1_j 1_j X_i^j & 1_j X_i^j 1_j
\end{pmatrix}
\end{equation*}
\begin{equation*}
A=
\begin{pmatrix}
(T_{ii} 1_i) \circ (1_i T_{ii}) \circ (T_{ii} 1_i)
\end{pmatrix}
\hspace{3.4in}
\end{equation*}
\begin{equation*}
B = 
\begin{pmatrix}
0 & (T_{ji} 1_i) \circ (1_j T_{ii}) \circ (T_{ij} 1_i) & 0 \\
0 & 0 & (T_{ii} 1_j) \circ (1_i T_{ji}) \circ (T_{ji} 1_i) \\
(T_{ij} 1_i) \circ (1_i T_{ij}) \circ (T_{ii} 1_j) & 0 & 0
\end{pmatrix}
\end{equation*}
\begin{equation*}
C = 
\begin{pmatrix}
0 & 0 & (T_{ji} 1_j) \circ (1_j T_{ji}) \circ (T_{jj} 1_i) \\
(T_{jj} 1_i) \circ (1_j T_{ij}) \circ (T_{ij} 1_j) & 0 & 0 \\
0 & (T_{ij} 1_j) \circ (1_i T_{jj}) \circ (T_{ji} 1_j) & 0
\end{pmatrix}
\end{equation*}
\begin{equation*}
D=
\begin{pmatrix}
(T_{jj} 1_j) \circ (1_j T_{jj}) \circ (T_{jj} 1_j)
\end{pmatrix}.
\hspace{3.3in}
\end{equation*}
Similarly, 
$  \s_i(1_j T_{jj}) \circ \s_i(T_{jj} 1_j) \circ \s_i(1_j T_{jj}) $ is a map of complexes:
\begin{equation*}
\xymatrix{
\P_i \P_i \P_i \la -3 \ra \ar[r]^{a} \ar[dd]^{A'}  & {\begin{matrix}  \P_i \P_i \P_j \la -2 \ra \\ \oplus \\  \P_i \P_j \P_i \la -2 \ra \\ \oplus \\  \P_j \P_i \P_i \la -2 \ra \end{matrix}} \ar[r]^{b} \ar[dd]^{B'}  & {\begin{matrix} \P_i \P_j \P_j \la -1 \ra \\ \oplus \\  \P_j \P_i \P_j \la -1 \ra \\ \oplus \\  \P_j \P_j \P_i \la -1 \ra  \end{matrix}} \ar[r]^{c} \ar[dd]^{C'} & \P_j \P_j \P_j  \ar[dd]^{D'} \\
\\
\P_i \P_i \P_i \la -3 \ra \ar[r]^{d}  & {\begin{matrix}  \P_i \P_i \P_j \la -2 \ra \\ \oplus \\ \P_i \P_j \P_i \la -2 \ra \\ \oplus \\ \P_j \P_i \P_i \la -2 \ra  \end{matrix}} \ar[r]^{e} & {\begin{matrix} \P_i \P_j \P_j \la -1 \ra \\ \oplus \\ \P_j \P_i \P_j \la -1 \ra  
\\ \oplus \\ \P_j \P_j \P_i \la -1 \ra \end{matrix}} \ar[r]^{f}  & \P_j \P_j \P_j  
}
\end{equation*}
where
\begin{equation*}
A'=
\begin{pmatrix}
(1_i T_{ii}) \circ (T_{ii} 1_i) \circ (1_i T_{ii})
\end{pmatrix}
\hspace{3.4in}
\end{equation*}
\begin{equation*}
B' = 
\begin{pmatrix}
0 & (1_i T_{ij}) \circ (T_{ii} 1_j) \circ (1_i T_{ji}) & 0 \\
0 & 0 & (1_i T_{ji}) \circ (T_{ji} 1_i) \circ (1_j T_{ii}) \\
(1_j T_{ii}) \circ (T_{ij} 1_i) \circ (1_i T_{ij}) & 0 & 0
\end{pmatrix}
\end{equation*}
\begin{equation*}
C' = 
\begin{pmatrix}
0 & 0 & (1_i T_{jj}) \circ (T_{ji} 1_j) \circ (1_j T_{ji}) \\
(1_j T_{ij}) \circ (T_{ij} 1_j) \circ (1_i T_{jj}) & 0 & 0 \\
0 & (1_j T_{ji}) \circ (T_{jj} 1_i) \circ (1_j T_{ij}) & 0
\end{pmatrix}
\end{equation*}
\begin{equation*}
D'=
\begin{pmatrix}
(1_j T_{jj}) \circ (T_{jj} 1_j) \circ (1_j T_{jj})
\end{pmatrix}.
\hspace{3.3in}
\end{equation*}
Equalities of matrices $ A=A', B=B', C=C', D=D' $ follow from the three strand relation in the category.

\section{Proof of Theorem \ref{thm:main2}}

To prove Theorem \ref{thm:main2} one needs to fix isomorphisms $\s_i \s_i^{-1} M \rightarrow M$, $\s_i^{-1} \s_i^{} M \rightarrow M$, $ \s_i \s_{i+1} \s_i M \rightarrow \s_{i+1} \s_i \s_{i+1} M $ on all generating 1-morphisms of $\Kom(\H')$ and then show that these isomorphisms are natural with respect to all generating 2-morphisms. 

\subsubsection{Reidemeister 2 relations on 1-morphisms}
 
Recall the homotopy equivalences 
$$ \nu_{\P_i} : \s_i(\P_i) \xrightarrow{\sim} \P_i \la -2 \ra [1] \ \ \text{ and }  \ \ \zeta_{\P_i}: \s_i^{-1}(\P_i) \xrightarrow{\sim} \P_i \la 2 \ra [-1].$$ 
We can use these to define isomorphisms 
$$\s_i^{-1} \s_i(\P_i) \xrightarrow{\s_i^{-1}(\nu_{\P_i})} \s_i^{-1}(\P_i \la -2 \ra [1]) \xrightarrow{\zeta_{\P_i}} \P_i \ \ \text{ and } \ \ \s_i \s_i^{-1}(\P_i) \xrightarrow{\s_i(\zeta_{\P_i})} \s_i(\P_i \la 2 \ra [-1]) \xrightarrow{\nu_{\P_i}} \P_i.$$

On the other hand, if $\la i,j \ra = -1$ then we use the following maps
\begin{equation*}
\xymatrix{
\s_i^{-1} \s_i (\P_j) \ar[d] & = & \P_i \la -1 \ra \ar[d] \ar[rr]^{\hspace{-1cm} (X_i^j \ \ 1 \ \ X_i^i)} & & \P_j \oplus \P_i \la -1 \ra \oplus \P_i \la 1 \ra \ar[d]^{(1 \ \ -X_i^j \ \ 0)} \ar[rr]^{\hspace{1cm} (X_j^i \ \ 0 \ \ -\epsilon_{ij})} & & \P_i \la 1 \ra \ar[d] \\
\P_j \ar[d] & = & 0 \ar[d] \ar[rr] && \P_j \ar[rr] \ar[d]^{(1 \ \ 0 \ \ \ep_{ij} X_j^i)} && 0 \ar[d] \\
\s_i^{-1} \s_i (\P_j) & = & \P_i \la -1 \ra \ar[rr]^{\hspace{-1cm} (X_i^j \ \ 1 \ \ X_i^i)} & & \P_j \oplus \P_i \la -1 \ra \oplus \P_i \la 1 \ra \ar[rr]^{\hspace{1cm} (X_j^i \ \ 0 \ \ -\epsilon_{ij})} & & \P_i \la 1 \ra 
}
\end{equation*}
where the rightmost column is in cohomological degree one. The vertical composition is 
\begin{equation*}
\xymatrix{
\P_i \la -1 \ra \ar[rr]^{} \ar[d]^{0} & & \P_j \oplus \P_i \la -1 \ra \oplus \P_i \la 1 \ra \ar[rr]^{} \ar[d]^{\gamma_0} \ar[lld]_{D_0}  & & \P_i \la 1 \ra \ar[d]^{0} \ar[lld]_{D_1}\\
\P_i \la -1 \ra \ar[rr]^{} & & \P_j \oplus \P_i \la -1 \ra \oplus \P_i \la 1 \ra \ar[rr]^{} & & \P_i \la 1 \ra 
}
\end{equation*}
where $\gamma_0 = \begin{pmatrix} 1 & -X_i^j & 0 \\ 0 & 0 & 0 \\ \epsilon_{ij} X_i^j & -X_i^i & 0 \end{pmatrix}$. Here $D_0 = (0 \ -1 \ 0)$ and $D_1 = (0 \ 0 \ \epsilon_{ij})$ give a homotopy between this composition and the identity map.

\subsubsection{Reidemeister 3 relations on 1-morphisms}

\begin{prop}
\label{braidp_i}
For $ \la i, j \ra = -1$
there is an isomorphism
\begin{equation*}
\gamma_{\P_i} \colon \s_i \s_{j} \s_i(\P_i) \rightarrow \s_{j} \s_i \s_{j}(\P_i).
\end{equation*}
\end{prop}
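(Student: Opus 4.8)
The plan is to compute the two triple composites $\s_i\s_j\s_i(\P_i)$ and $\s_j\s_i\s_j(\P_i)$ explicitly and to show that each is homotopy equivalent to the single 1-morphism $\P_j\la -3\ra[2]$; the isomorphism $\gamma_{\P_i}$ is then obtained by composing one equivalence with the inverse of the other. As a sanity check on the target, at the decategorified level Lemma \ref{lemma1} with $n=1$ gives $\sigma_i\sigma_j\sigma_i(P_i)=t^{-3}P_j=\sigma_j\sigma_i\sigma_j(P_i)$, which under the dictionary $\la 1\ra\mapsto t$, $[1]\mapsto -1$ is exactly the class of $\P_j\la -3\ra[2]$, so a single surviving term is what one should expect.

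First I would compute $\s_i\s_j\s_i(\P_i)$ from the inside out. Using the homotopy equivalence $\nu_{\P_i}\colon\s_i(\P_i)\xrightarrow{\sim}\P_i\la -2\ra[1]$ from Section \ref{sec:braidH}, the innermost term simplifies, and since $\la j,i\ra=-1$ we get $\s_j\s_i(\P_i)\simeq[\P_j\la -3\ra\xrightarrow{X_j^i}\P_i\la -2\ra]$ up to homological shift. Applying $\s_i$ to this two-term complex produces the total complex of the square whose columns are $\s_i(\P_j\la -3\ra)=[\P_i\la -4\ra\xrightarrow{X_i^j}\P_j\la -3\ra]$ and $\s_i(\P_i\la -2\ra)$. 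Simplifying the latter column again by $\nu_{\P_i}$ and inserting the simplified form of $\nu_{\P_i}\circ\s_i(X_j^i)$ recorded in Section \ref{sec:braidH} (which is $\epsilon_{ij}1_i$ on the $\P_i$-summand and $0$ on the $\P_j$-summand) leaves a square in which one pair of $\P_i\la -4\ra$ terms is identified by the isomorphism $\epsilon_{ij}1_i$. The Cancellation Lemma \ref{lem:cancel} then eliminates this pair, leaving $\P_j\la -3\ra$ carrying the two accumulated homological shifts, i.e.\ $\P_j\la -3\ra[2]$.

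Next I would run the analogous computation for $\s_j\s_i\s_j(\P_i)$. Here the innermost $\s_j(\P_i)=[\P_j\la -1\ra\xrightarrow{X_j^i}\P_i]$ is already a two-term complex; applying $\s_i$ and then $\s_j$, each of the two diagonal factors $\s_i(\P_i)$ and $\s_j(\P_j)$ contributes one homological shift via $\nu_{\P_i}$ respectively $\nu_{\P_j}$, while the off-diagonal factors are the corresponding two-term complexes $\s_i(\P_j)$ and $\s_j(\P_i)$. After replacing the differentials by their simplified forms and applying the Cancellation Lemma \ref{lem:cancel} to the isomorphism-identified summands, this composite also collapses to $\P_j\la -3\ra[2]$. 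Composing the two equivalences then yields the desired $\gamma_{\P_i}\colon\s_i\s_j\s_i(\P_i)\xrightarrow{\sim}\s_j\s_i\s_j(\P_i)$.

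The main obstacle will be the bookkeeping in these totalizations: one must track the internal grading shifts and, more delicately, the Koszul signs introduced when applying a functor to a complex and when totalizing the resulting bicomplex, and then verify that the off-diagonal differentials are genuinely of the form needed for the summands to cancel in pairs (rather than merely cancelling at the level of Euler characteristics). The homotopy simplifications of $\s_i(\P_i)$, $\s_i(X_i^j)$ and $\s_i(X_j^i)$ collected in Section \ref{sec:braidH} are precisely what make these cancellations transparent, so the real work is organizing each square correctly and confirming that the surviving term is a single copy of $\P_j\la -3\ra[2]$.
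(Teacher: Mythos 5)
Your proposal is correct and follows essentially the same route as the paper: both triple composites are collapsed to $\P_j \la -3 \ra [2]$, and $\gamma_{\P_i}$ is defined as the composite of one homotopy equivalence with the inverse of the other. The only cosmetic difference is that the paper packages the key intermediate step as an explicit homotopy equivalence $\beta \colon \s_i \s_j(\P_i) \rightarrow \P_j \la -1 \ra [1]$ (with explicit inverse $\bar{\beta}$) and then composes with $\nu_{\P_i}$, respectively $\nu_{\P_j}$, whereas you reach the same collapse via the simplified maps of Section \ref{sec:braidH} together with repeated Gaussian elimination (Lemma \ref{lem:cancel}); the overall sign the paper inserts into $\gamma_{\P_i}$ is immaterial for the mere existence of the isomorphism.
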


\begin{proof}
One checks that the map of complexes $ \beta \colon \s_i \s_{j}(\P_i) \rightarrow \P_j \la -1 \ra $ given by
\begin{equation*}
\xymatrix{
\P_i \la -2 \ra \ar[rr]^{\begin{pmatrix}
-\epsilon_{ij} \\
0 \\ 
X_i^j
\end{pmatrix}\hspace{.4in}} \ar[d] & & \P_i \la -2 \ra \oplus \P_i \la 1 \ra \oplus \P_j \la -1 \ra  \ar[rr]^{\hspace{.7in} \begin{pmatrix}
X_i^i & 1 & X_j^i
\end{pmatrix}} \ar[d]^{\begin{pmatrix}
\epsilon_{ij} X_i^j & 0 & 1
\end{pmatrix}} & & \P_i  \ar[d] \\
0 \ar[rr] & & \P_j \ar[rr] & & 0
}
\end{equation*}
is a homotopy equivalence with inverse map $ \bar{\beta} $.
Then $ \beta \circ \s_i \s_j(\nu_{\P_i}) \colon \s_i \s_j \s_i(\P_i) \rightarrow \P_j \la -3 \ra [2] $ is a homotopy equivalence.

Similarly $ \nu_{\P_{j}} \circ \s_{j}(\beta_{\P_{j}}) \colon \s_{j} \s_i \s_{j}(\P_i) \rightarrow \P_{j} \la -3 \ra [-2] $ is a homotopy equivalence.
Finally we define
\begin{equation*}
\gamma_{\P_i} = -\s_{i+1}(\bar{\beta}_{\P_{i+1}}) \circ \bar{\nu}_{\P_{i+1}} \circ \beta_{\P_{i+1}} \circ \s_i \s_{i+1} (\nu_{\P_i}).
\end{equation*}
\end{proof}

\begin{prop}
For $ \la i, j \ra = -1$
there is an isomorphism
\begin{equation*}
\gamma_{\P_{j}} \colon \s_i \s_{j} \s_i(\P_{j}) \rightarrow \s_{j} \s_i \s_{j}(\P_{j}).
\end{equation*}
\end{prop}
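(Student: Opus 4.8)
The plan is to follow the proof of Proposition~\ref{braidp_i} almost verbatim, interchanging the roles played by the two generators. That is, I will construct explicit homotopy equivalences from each of $\s_i\s_j\s_i(\P_j)$ and $\s_j\s_i\s_j(\P_j)$ to a single shifted $1$-morphism, and then set $\gamma_{\P_j}$ equal to the first composed with the inverse of the second. The Grothendieck-group shadow of this computation (apply the Fock-space formulas of Proposition~\ref{prop:fockspace}, as in Lemma~\ref{lemma1}) reads $\sigma_i\sigma_j\sigma_i(P_j)=\sigma_j\sigma_i\sigma_j(P_j)=t^{-3}P_i$, so here the common target is a shift of $\P_i$, namely $\P_i\la-3\ra[2]$, rather than a shift of $\P_j$ as in Proposition~\ref{braidp_i}.

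The essential new input is the analogue of the map $\beta$ of Proposition~\ref{braidp_i}: an explicit homotopy equivalence
$$\beta'\colon \s_j\s_i(\P_j)\xrightarrow{\ \sim\ }\P_i\la-1\ra[1]$$
with homotopy inverse $\bar\beta'$. To produce it one writes $\s_i(\P_j)=[\P_i\la-1\ra\xrightarrow{X_i^j}\P_j]$, applies $\s_j$ to this two-term complex using the formulas of Section~\ref{sec:action1} for $\s_j(\P_i)$ and $\s_j(\P_j)$, and takes the total complex; the result is concentrated in three homological degrees, and its only surviving cohomology is a copy of $\P_i\la-1\ra$ in the middle degree. The map $\beta'$ exhibiting this is written as an explicit matrix of dots and identities, with the appropriate $\epsilon_{ji}=-\epsilon_{ij}$ signs. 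Because the definitions in Section~\ref{sec:action1} are not symmetric under $i\leftrightarrow j$, one cannot simply quote $\beta$ from Proposition~\ref{braidp_i}; the matrix for $\beta'$, its inverse $\bar\beta'$, and the two contracting homotopies witnessing $\beta'\bar\beta'\simeq\id$ and $\bar\beta'\beta'\simeq\id$ must all be written out and checked directly.

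Granting $\beta'$ and $\bar\beta'$, the two equivalences are assembled exactly as in Proposition~\ref{braidp_i}, using the homotopy equivalences $\nu_{\P_i}\colon\s_i(\P_i)\xrightarrow{\sim}\P_i\la-2\ra[1]$ and $\nu_{\P_j}$ (with inverses $\bar\nu_{\P_i},\bar\nu_{\P_j}$) recalled in Section~\ref{sec:braidH}. Grouping the inner two functors, $\s_i\s_j\s_i(\P_j)=\s_i\bigl(\s_j\s_i(\P_j)\bigr)$, so
$$\s_i\s_j\s_i(\P_j)\xrightarrow{\s_i(\beta')}\s_i(\P_i)\la-1\ra[1]\xrightarrow{\nu_{\P_i}}\P_i\la-3\ra[2]$$
is a homotopy equivalence; and since $\s_j\s_i\s_j(\P_j)=\s_j\s_i\bigl(\s_j(\P_j)\bigr)$, the composite $\beta'\circ\s_j\s_i(\nu_{\P_j})$ gives a homotopy equivalence $\s_j\s_i\s_j(\P_j)\xrightarrow{\sim}\P_i\la-3\ra[2]$ to the \emph{same} object. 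One then defines
$$\gamma_{\P_j}=\pm\,\s_j\s_i(\bar\nu_{\P_j})\circ\bar\beta'\circ\nu_{\P_i}\circ\s_i(\beta'),$$
with the shifts understood and a sign fixed as for $\gamma_{\P_i}$, and this is the required isomorphism. The one genuinely laborious step is the construction and verification of $\beta'$ on the several-term total complex $\s_j\s_i(\P_j)$, where all internal shifts $\la\cdot\ra$, homological shifts $[\cdot]$, and orientation signs must be tracked simultaneously; once this is done the remainder is the same formal bookkeeping of shifts as in Proposition~\ref{braidp_i}.
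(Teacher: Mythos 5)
Your proposal is correct and takes essentially the same approach as the paper: the paper's proof of this proposition consists of the single remark that it is ``similar to that of Proposition \ref{braidp_i}'', and your argument---constructing the $i\leftrightarrow j$ mirror $\beta'\colon \s_j\s_i(\P_j)\xrightarrow{\ \sim\ }\P_i\la-1\ra[1]$ of $\beta$, identifying both $\s_i\s_j\s_i(\P_j)$ and $\s_j\s_i\s_j(\P_j)$ with $\P_i\la-3\ra[2]$ via $\nu_{\P_i}$ and $\nu_{\P_j}$, and defining $\gamma_{\P_j}$ as one equivalence composed with the inverse of the other---is precisely that argument made explicit. In particular, your identification of the common target as a shift of $\P_i$ (rather than of $\P_j$, as in Proposition \ref{braidp_i}) is the correct mirror of the paper's computation.
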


\begin{proof}
The proof of this is similar to that of Proposition ~\ref{braidp_i}. 
\end{proof}

Next, if $\la i,j \ra = -1 = \la j,k \ra$ and $\la i,k \ra =0$ one needs to write down a homotopy equivalence $\s_i \s_j \s_i(\P_k) \xrightarrow{\sim} \s_j \s_i \s_j(\P_k)$. A direct calculation shows that 
\begin{align*}
\s_i \s_j \s_i (\P_k) &= \left[ \P_i \la -2 \ra \xrightarrow{X_i^j} \P_j \la -1 \ra \xrightarrow{X_j^k} \P_k \right] \\
\s_j \s_i \s_j (\P_k) &= \left[ \P_j \la -3 \ra \xrightarrow{A} \P_j \la -3 \ra \oplus \P_j \la -1 \ra \oplus \P_i \la -2 \ra \xrightarrow{B} \P_j \la -1 \ra \oplus \P_j \la -1 \ra \xrightarrow{C} \P_k \right]
\end{align*}
where 
\begin{equation*}
A = \begin{pmatrix} 1 & 0 & X_j^i \end{pmatrix}
\hspace{.2in}
B = \begin{pmatrix} 0 & -1 & 0 \\ X_j^j & 1 & X_i^j \end{pmatrix}
\hspace{.2in}
C = \begin{pmatrix} X_j^k & X_j^k \end{pmatrix}.
\end{equation*}
It is not difficult to check that the following maps are homotopy equivalences:
\begin{equation*}
\xymatrix{
\s_i \s_j \s_i (\P_k) \ar[d]^{\phi} & = & \P_j \la -3 \ra \ar[r]^-{A} \ar[d] & \P_j \la -3 \ra \oplus \P_j \la -1 \ra \oplus \P_i \la -2 \ra \ar[r]^-{B} \ar[d]^{(-X_j^i \ \ 0 \ \ 1)} & \P_j \la -1 \ra \oplus \P_j \la -1 \ra \ar[r]^-{C} \ar[d]^{(1 \ \ 1)} & \P_k \ar[d]^{1} \\
\s_j \s_i \s_j(\P_k) \ar[d]^{\bar{\phi}} & = & 0 \ar[r] \ar[d] & \P_i \la -2 \ra \ar[r]^-{X_i^j} \ar[d]^{(0 \ \ -X_i^j \ \ 1)} & \P_j \la -1 \ra \ar[r]^-{X_j^k} \ar[d]^{(1 \ \ 0)} & \P_k \ar[d]^{1} \\
\s_i \s_j \s_i (\P_k) & = & \P_j \la -3 \ra \ar[r]^-{A} & \P_j \la -3 \ra \oplus \P_j \la -1 \ra \oplus \P_i \la -2 \ra \ar[r]^-{B} & \P_j \la -1 \ra \oplus \P_j \la -1 \ra \ar[r]^-{C} & \P_k
}
\end{equation*}

Finally, if $i,j,k$ form a triangle then there is a more complicated homotopy equivalence $\s_i \s_j \s_i (\P_k) \xrightarrow{\sim} \s_j \s_i \s_j (\P_k)$ which we omit (the interested reader can contact the authors for more details). 

\subsubsection{Reidemeister 2 relations on 2-morphisms} 

First, assuming $\la j,k \ra = -1$, one needs to check that the following diagrams commute
\begin{equation*}
\xymatrix{
\s_i^{-1} \s_i (\P_k \la -1 \ra) \ar[rr]^{ \s_{i}^{-1} \s_i (X_k^j)} \ar[d] &  & \s_i^{-1} \s_i (\P_j) \ar[d] \\
\P_k \la -1 \ra \ar[rr]^{X_k^j} & & \P_j
}
\hspace{1in}
\xymatrix{
\s_i^{} \s_i^{-1} (\P_k \la -1 \ra) \ar[rr]^{ \s_{i}^{} \s_i^{-1} (X_k^j)} \ar[d] & & \s_i^{} \s_i^{-1} (\P_j) \ar[d] \\
\P_k \la -1 \ra \ar[rr]^{X_k^j} & & \P_j.
}
\end{equation*}
where the vertical maps are those from the previous section. 

Next, for any $i,j,k$, one needs to check the commutativity of the following squares
\begin{equation*}
\xymatrix{
\s_i^{-1} \s_i (\P_j \P_k) \ar[rr]^{\s_i^{-1} \s_i (T_{jk})} \ar[d] && \s_i^{-1} \s_i (\P_j \P_k) \ar[d]  \\
\P_j \P_k \ar[rr]^{T_{jk}} && \P_j \P_k
}
\hspace{1in}
\xymatrix{
\s_i^{} \s_i^{-1} (\P_j \P_k) \ar[rr]^{\s_i^{} \s_i^{-1} (T_{jk})} \ar[d] && \s_i^{} \s_i^{-1} (\P_j \P_k) \ar[d]  \\
\P_j \P_k \ar[rr]^{T_{jk}} && \P_j \P_k. }
\end{equation*}

\subsubsection{Reidemeister 3 relations on 2-morphisms}

First, assuming $ \la i,j \ra =-1$ and $\la l,k \ra = -1$, one needs to check that the following diagram commutes
\begin{equation*}
\xymatrix{
\s_i \s_{j} \s_i (\P_{k} \la -1 \ra) \ar[rrr]^{\s_i \s_{i+1} \s_i (X_{k}^{l})} \ar[d]&  & & \s_i \s_{j} \s_i (\P_{l}) \ar[d]\\
\s_{j} \s_{i} \s_{j} (\P_{k} \la -1 \ra) \ar[rrr]^{\s_{j} \s_{i} \s_{j} (X_{k}^{l})}  & & & \s_{j} \s_{i} \s_{j} (\P_{l}) 
}.
\end{equation*}

Next, for any $i,j,k,l$ with $ \la i,j \ra =-1$, one needs to show that the following diagram commutes
\begin{equation*}
\xymatrix{
\s_i \s_{j} \s_i (\P_{l} \P_k) \ar[rrr]^{\s_i \s_{j} \s_i (T_{lk})} \ar[d]^{\gamma_{\P_l} \gamma_{\P_k}} &  & & \s_i \s_{j} \s_i (\P_{k} \P_l) \ar[d]^{\gamma_{\P_k} \gamma_{\P_l}}\\
\s_{j} \s_{i} \s_{j} (\P_{l} \P_k) \ar[rrr]^{\s_{j} \s_{i} \s_{j} (T_{lk})}  & & & \s_{j} \s_{i} \s_{j} (\P_{k} \P_l) 
}.
\end{equation*}

Checking that all these diagrams commute breaks up into many cases. Each case, though not difficult, is a bit tedious (the interested reader can contact the authors for more details about these calculations).


\begin{thebibliography}{E-G-S}

\bibitem[A]{A} 
R. Anno, Spherical functors; \textsf{math.CT/0711.4409}.

\bibitem[BN]{BN}
D. Bar-Natan, Fast Khovanov homology computations, \textit{J. Knot Theory Ramifications}, \textbf{16} (2007) no. 3, 243--255; \textsf{math.GT/0606318}.

\bibitem[CK1]{CK1}
S. Cautis and J. Kamnitzer, Knot homology via derived categories of coherent sheaves I, sl(2) case, \textit{Duke Math. Journal} \textbf{142} (2008), no. 3, 511--588; \textsf{math.AG/0701194}.

\bibitem[CK2]{CK2}
S. Cautis and J. Kamnitzer, Braiding via geometric Lie algebra actions, \textit{Compositio Math.} \textbf{148} (2012), no. 2,  464--506; \textsf{arXiv:1001.0619}.

\bibitem[CKL]{CKL}
S. Cautis, J. Kamnitzer and A. Licata, Derived equivalences for cotangent bundles of Grassmannians via categorical $sl_2$ actions, \textit{J. Reine Angew. Math.} (to appear); \textsf{math.AG/0902.1797}.

\bibitem[CLa]{CLa}
S. Cautis and A. Lauda, Implicit structure in 2-representations of quantum groups; \textsf{arXiv:1111.1431}.

\bibitem[CL1]{CL1}
S. Cautis and A. Licata, Heisenberg categorification and Hilbert schemes, \textit{Duke Math. Journal} (to appear); \textsf{arXiv:1009.5147}.

\bibitem[CL2]{CL2}
S. Cautis and A. Licata, Vertex operators and 2-representations of quantum affine algebras; \textsf{arXiv:1112.6189}.

\bibitem[CR]{CR}
J. Chuang and R. Rouquier, Derived equivalences for symmetric groups and $\sl_2$-categorification, \textit{Ann. of Math.} \textbf{167} (2008), no. 1, 245--298; \textsf{math.RT/0407205}.

\bibitem[H]{H}
R. P. Horja, Derived category automorphisms from mirror symmetry, \textit{Duke Math. Journal} \textbf{127} (2005), 1--34;\textsf{math.AG/0103231}.

\bibitem[HK]{HK}
R.S. Huerfano and M. Khovanov, A category for the adjoint representation, \textit{J. Algebra} {\textbf{246}} (2001), no. 2, 514--542.

\bibitem[K]{K}
M. Khovanov, A functor-valued invariant of tangles, \emph{Trans. Amer. Math. Soc.} {\textbf{358}} (2006), 315--327; \textsf{math.QA/0103190}.

\bibitem[KL1]{KL1}
M. Khovanov and A. Lauda, A diagrammatic approach to categorification of quantum groups I, \textit{Represent. Theory} \textbf{13} (2009), 309--347; \textsf{math.QA/0803.4121}.

\bibitem[KL2]{KL2}
M. Khovanov and A. Lauda, A diagrammatic approach to categorification of quantum groups II, \textit{Trans. Amer. Math. Soc.} \textbf{363} (2011), 2685--2700; \textsf{math.QA/0804.2080}.

\bibitem[KL3]{KL3}
M. Khovanov and A. Lauda, A diagrammatic approach to categorification of quantum groups III, \textit{Quantum Topology} \textbf{1}, Issue 1 (2010), 1--92; \textsf{math.QA/0807.3250}.

\bibitem[KS]{KS}
M. Khovanov and P. Seidel, Quivers, Floer cohomology, and braid group actions, {\textit{J. Amer. Math. Soc.}} {\bf 15} (2002), no. 1, 203--271.

\bibitem[P]{P}
D. Ploog, Equivariant autoequivalences for finite group actions, \textit{Adv. Math.} \textbf{216} (2007), no. 1, 62--74; \textsf{math.AG/0508625}. 

\bibitem[Rin]{Rin}
C. M. Ringel, Tame Algebras and Integral Quadratic Forms, \textit{Lecture Notes in Mathematics}, \textbf{1099} (1984), Springer Berlin.

\bibitem[Rou1]{Rou1}
R. Rouquier, Categorification of braid groups; \textsf{math.RT/0409593}.

\bibitem[Rou2]{Rou2}
R. Rouquier, 2-Kac-Moody algebras; \textsf{math.RT/0812.5023}.

\bibitem[ST]{ST}
P. Seidel and R. Thomas, Braid group actions on derived categories of coherent sheaves. \textit{Duke Math. Journal}, \textbf{108} (2001), no. 1, 37--108.

\end{thebibliography}
\end{document}